
\documentclass[12pt]{amsart}
\usepackage{amsmath,amsfonts,amssymb,amsthm}
\usepackage{anysize}
\marginsize{2cm}{2cm}{2cm}{2cm}
\usepackage{graphicx}
\usepackage{color}
\usepackage{soul}
\usepackage{enumerate}

\def\z{\mathfrak{z}}
\def\u{\mathfrak{u}}

\def\g{\mathfrak{g}}
\def\h{\mathfrak{h}}
\def\n{\mathfrak{n}}
\def\v{\mathfrak{v}}

\def\C{\mathbb{C}}
\def\R{\mathbb{R}}

\def\Z{\mathbb{Z}}
\def\N{\mathbb{N}}
\def\H{\mathbb H}

\def\ad{\operatorname{ad}}

\def\alt{\raise1pt\hbox{$\bigwedge$}}

\theoremstyle{plain}
\newtheorem{theorem}{\bf Theorem}[section]
\newtheorem{corollary}[theorem]{\bf Corollary}
\newtheorem{proposition}[theorem]{\bf Proposition}
\newtheorem{lemma}[theorem]{\bf Lemma}

\theoremstyle{definition}
\newtheorem{definition}[theorem]{\bf Definition}
\newtheorem{example}[theorem]{\bf Example}

\theoremstyle{remark}
\newtheorem{remark}[theorem]{\bf Remark}

\newcommand\aff{\mathfrak{aff}}

\begin{document}
	
\begin{abstract}
We introduce the notion of abelian almost contact structures on an odd dimensional real Lie algebra $\g$. This a sufficient condition for the structure to be normal. We investigate correspondences with even dimensional real Lie algebras endowed with an abelian complex structure, and with K\"ahler Lie algebras when $\g$ carries a compatible inner product. The classification of $5$-dimensional Sasakian Lie algebras with abelian structure is obtained. Later, we introduce and study abelian almost $3$-contact structures on real Lie algebras of dimension $4n+3$. These are given by triples of abelian almost contact structures, satisfying certain compatibility conditions, which are equivalent to the existence of a sphere of abelian almost contact structures. We obtain the classification of these Lie algebras in dimension $7$. Finally, we deal with the geometry of a Lie group $G$ endowed with a left invariant abelian almost $3$-contact structure and a compatible left invariant Riemannian metric.  We determine conditions for $G$ to admit a special metric connection with totally skew-symmetric torsion, called canonical, which plays the role of the Bismut connection for HKT structures arising from abelian hypercomplex structures. We provide examples and discuss the parallelism of the torsion of the canonical connection.
\end{abstract}

\title[Odd dimensional counterparts of abelian structures]{Odd dimensional counterparts of abelian complex and hypercomplex structures}
	
	
\author{Adri\'an Andrada}
\address[Andrada]{FaMAF-CIEM (CONICET), Universidad Nacional de C\'ordoba, Av. Medina Allende S/N, Ciudad Universitaria, X5000HUA
	C\'ordoba, Argentina} \email{andrada@famaf.unc.edu.ar}
	
\author{Giulia Dileo}
\address[Dileo]{Dipartimento di Matematica, Universit\`a degli Studi di Bari Aldo Moro, Via E. Orabona 4, 70125 Bari, Italy}
\email{giulia.dileo@uniba.it}

\thanks{2020 \emph{Mathematics Subject Classification}. 53C15, 53D15, 22E60, 53C25, 53B05, 22E25}

%

\thanks{This research was partially done during a visit of the first author at the Department of Mathematics of the University of Bari, financially supported by the research group GNSAGA of the Istituto Nazionale di Alta Matematica (Italy). The first author was also partially supported by CONICET, ANPCYT and SECYT-UNC (Argentina).}
	
\maketitle
	
\section{Introduction}

An abelian complex structure on a real Lie algebra $\g$ is an endomorphism $J$ of $\mathfrak g$ satisfying
	\begin{equation*} 
	J^2=-I, \hspace{1.5cm} [Jx,Jy]=[x,y] \quad	\forall x,y \in \g,
	\end{equation*}
where the second condition is equivalent to asking that the $i$-eigenspace $\g^{1,0}$ of $J$ be an abelian subalgebra of $\g^\C$. This implies the integrability of the corresponding left invariant almost complex structure $J$ defined on any Lie group $G$ with Lie algebra $\g$. It is known that every Lie algebra endowed with an abelian complex structure is $2$-step solvable (\cite{Pe}). The notion of abelian complex structure was introduced in \cite{BDM}, and afterwards developed in various directions. On the one hand one can consider a compatible inner product on the Lie algebra, giving rise to the notion of abelian Hermitian structures (\cite{ABD1}); on the other hand one can consider abelian hypercomplex structures, consisting of triples of abelian complex structures $\{J_i\}_{i=1,2,3}$ satisfying $J_1J_2=J_3=-J_2J_1$ (\cite{DF1,DF2}).

An interesting interplay with HKT geometry has been investigated in this context (\cite{DF,FG}). If a Lie algebra $\g$ is endowed with an abelian hyperHermitian structure $(\{J_i\}_{i=1,2,3},g)$, i.e. an abelian hypercomplex structure and a compatible inner product, the corresponding left invariant hyperHermitian structure on any Lie group $G$ with Lie algebra $\g$, makes $G$ a hyperK\"ahler with torsion manifold. This means that $G$ admits a (unique) metric connection $\nabla$ with totally skew-symmetric torsion such that $\nabla J_i=0$, $i=1,2,3$. In fact, this connection coincides with the Bismut connection of each Hermitian structure $(J_i,g)$, $i=1,2,3$, also called KT connection by physicists, whose torsion is the $3$-form $c=J_id\Omega_i$; here $\Omega_i=g(\cdot,J_i\cdot)$ denotes the K\"ahler form for $J_i$.
Further, the HKT structure arising from an abelian hyperHermitian structure turns out to be weak, in the sense that $dc\ne 0$.

\

The aim of the present paper is to provide and study odd dimensional counterparts of abelian complex and hypercomplex structures, whose natural setting is given by almost contact and almost $3$-contact structures. We will introduce the relative notions of \emph{abelian} structures on Lie algebras, taking also into account the left invariant structures defined on the corresponding Lie groups. In particular, on a Lie group $G$ endowed with an abelian almost $3$-contact structure, considering a compatible left invariant Riemannian metric $g$, we will look for a \lq good' metric connection with totally skew-symmetric torsion, playing the role of the Bismut connection in HKT geometry.

\

An \emph{almost contact structure} on a real Lie algebra ${\mathfrak g}$ is a triple $(\varphi,\xi,\eta)$ with $\varphi\in \mathrm{End}({\mathfrak g})$, $\xi\in{\mathfrak g}$, $\eta\in{\mathfrak g}^*$ satisfying
	\[\varphi^2=-I+\eta\otimes \xi,\quad \eta(\xi)=1,\]
which imply that $\varphi(\xi)=0$ and $\eta\circ\varphi=0$. The Lie algebra splits as $\mathfrak g=\mathbb{R}\xi\oplus\mathfrak h $, where the subspace $\mathfrak h:=\mathrm{Ker}\,\eta=\mathrm{Im}\,\varphi$ is endowed with the complex structure $J:=\varphi|_{\mathfrak h}:\mathfrak h\to{\mathfrak h}$. We will say that the almost contact structure is \emph{abelian} if
\[\mathrm{ad}_\xi\circ\varphi=\varphi\circ \mathrm{ad}_\xi\quad \mbox{ and }\quad [\varphi X,\varphi Y]=[X,Y]\quad \forall X,Y\in\mathfrak h.\]
This is equivalent to asking that in the complexified Lie algebra ${\mathfrak g}^{\mathbb{C}}={\mathbb{C}\xi \oplus \mathfrak h}^{1,0}\oplus{\mathfrak h}^{0,1}$ the $i$-eigenspace $\h^{1,0}$ of $J$ be an $\mathrm{ad}_\xi$-invariant abelian subalgebra. Further, this is a sufficient condition for the almost contact structure to be \emph{normal}.

Examples of Lie algebras endowed with an abelian almost contact structure $(\varphi,\xi,\eta)$ can be obtained as extensions of a Lie algebra $\h$, with abelian complex structure $J$, in the following two different ways:
\begin{itemize}
\item[-] considering the $1$-dimensional central extension of $\h$ by a $J$-invariant $2$-cocycle; in fact, this is a characterization of abelian almost contact structures with central $\xi$, and this is the case of the real Heisenberg Lie algebra $\h_n^\R$;
\item[-] considering the $1$-dimensional extension of $\h$ by a derivation $D:\h\to\h$  commuting with $J$, which characterizes abelian almost contact structures with closed $1$-form $\eta$.
    \end{itemize}
In both cases, the almost contact Lie algebra obtained is solvable.

 In the $3$-dimensional case, an almost contact structure is abelian if and only if $\mathrm{ad}_\xi\circ\varphi=\varphi\circ \mathrm{ad}_\xi$, and the classification of these Lie algebras can be easily obtained (Proposition \ref{dim3}).

A natural further step consists in considering almost contact metric Lie algebras, that is Lie algebras endowed with an almost contact structure $(\varphi,\xi,\eta)$ and a compatible inner product $g$, which satisfies
\[g(\varphi X,\varphi Y)=g(X,Y)-\eta(X)\eta(Y)\qquad\forall X,Y\in\g\]
so that $\h=\langle\xi\rangle^\perp$. It is well known that the real Heisenberg Lie algebra is endowed with a compatible inner product making $\h_n^\R$ a Sasakian Lie algebra. We will examine correspondences with K\"ahler Lie algebras $(\h,J,g)$, with abelian complex structure, analyzing the two types of $1$-dimensional extensions of $\h$ mentioned above. We will also classify $5$-dimensional Sasakian Lie algebras with abelian structure (Section \ref{dim5}).

\

In Section \ref{section-3contact} we introduce \emph{abelian almost $3$-contact structures}. These are defined as triples of abelian almost contact structures $\{(\varphi_i,\xi_i,\eta_i)\}_{i=1,2,3}$, such that
\begin{equation}
		\begin{split}\label{3-sasaki1}
		\varphi_k=\varphi_i\varphi_j-\eta_j\otimes\xi_i=-\varphi_j\varphi_i+\eta_i\otimes\xi_j,\quad\\
		\xi_k=\varphi_i\xi_j=-\varphi_j\xi_i, \quad
		\eta_k=\eta_i\circ\varphi_j=-\eta_j\circ\varphi_i,
		\end{split}
	\end{equation}
for every even permutation $(i,j,k)$ of $(1,2,3)$. In general, the existence of an almost $3$-contact structure is equivalent to the existence of a sphere $\{(\varphi_a,\xi_a,\eta_a)\}_{a\in S^2}$ of almost contact structures such that
\[\varphi_a\circ\varphi_b-\eta_b\otimes\xi_a=\varphi_{a\times b}-(a\cdot b)\,I,\qquad \varphi_a\xi_b=\xi_{a\times b},\qquad \eta_a\circ\varphi_b=\eta_{a\times b},\]
for every $a,b\in S^2$, where $\cdot$ and $\times$ denote the standard inner product and cross product on $\mathbb{R}^3$.
We show that if the three structures $(\varphi_i,\xi_i,\eta_i)$, $i=1,2,3$, satisfying \eqref{3-sasaki1} are abelian, then every structure in the sphere is abelian.

A Lie algebra ${\mathfrak g}$ endowed with an almost $3$-contact structure splits as ${\mathfrak g}={\mathfrak h}\oplus{\mathfrak v}$, as direct sum of vector spaces, called respectively	\emph{horizontal} and
	\emph{vertical}, and defined by \[
	{\mathfrak h} \, :=\, \bigcap_{i=1}^{3}\operatorname{Ker}\eta_i,\qquad
	{\mathfrak v}\, :=\, \langle\xi_1,\xi_2,\xi_3\rangle.
	\]
	In particular $\dim{\mathfrak h}=4n$. It is interesting to note that, if the structure is abelian, many features of the Lie algebra $\g$ are encoded by a horizontal vector ${\mathcal Z}\in\h$ and an endomorphism $\psi:\h\to\h$ defined by
\[{\mathcal Z}:=\varphi_i([\xi_j,\xi_k]), \qquad \psi:=(\mathrm{ad}_{\xi_i}\circ\varphi_i)|_{\mathfrak h}=(\varphi_i\circ\mathrm{ad}_{\xi_i})|_{\mathfrak h},\]
for every even permutation $(i,j,k)$ if $(1,2,3)$. They satisfy
\begin{equation*}
	[\xi_j,\xi_k]=-\varphi_i{\mathcal Z}+2\delta\xi_i,\qquad \psi({\mathcal Z})=0,\qquad \operatorname{ad}_{\mathcal Z}|_{\mathfrak h}=2(\psi^2+\delta \psi),
	\end{equation*}
for some real number $\delta$. In particular,
\begin{itemize}
\item[-] ${\mathcal Z}=0$ if and and only if $\v$ is a subalgebra of $\g$, in which case either $\mathfrak v$ is abelian or isomorphic to $\mathfrak{so}(3)$, according to $\delta=0$ or $\delta\ne 0$;
\item[-] $\mathcal Z$ is central if and only if $\psi^2+\delta \psi=0$.
\end{itemize}
The rank of $\psi$ is $4p$, $0\leq p\leq n$, and we are able to further describe the Lie algebra when $\psi$ has minimum or maximum rank, according to the following scheme:
\begin{itemize}
\item $\psi=0$, ${\mathcal Z}=0$, $\delta=0$ ($\Leftrightarrow \v\subset\z(\g)$):\smallskip\\
 the horizontal subspace $\mathfrak h$ admits a Lie algebra structure and an abelian hypercomplex structure $\{J_1,J_2,J_3\}$, such that $\mathfrak g\cong\R^3\oplus_{\theta} \h$ is the central extension of  $\mathfrak h$ by a $J_i$-invariant $\mathfrak v$-valued $2$-cocycle $\theta$ of $\mathfrak h$;\medskip
\item $\psi=0$, ${\mathcal Z}=0$, $\delta\ne0$:\smallskip\\
$\h$ and $\v$ are ideals of $\g$, and $\g\cong \mathfrak{so}(3)\times \h$, $\h$ carrying an abelian hypercomplex structure;

\medskip
\item $\psi=0$, ${\mathcal Z}\ne0$:\smallskip\\
 $\h$ is an ideal of $\g$; it carries an abelian hypercomplex structure and its center contains $\mathcal Z$ and $\mathcal Z_i:=\varphi_i \mathcal Z$ for $i=1,2,3$.
The adjoint action of $\xi_i$, is given by
\[
 [\xi_i,\xi_j]=2\delta\xi_k-\mathcal Z_k, \qquad [\xi_i,X]=0,
\]
for any even permutation $(i,j,k)$ of $(1,2,3)$ and for any $X \in\h$. If $\delta\neq 0$ then $\g\cong \u\times \h$, where $\u=\text{span}\{2\delta \xi_1-\mathcal{Z}_1,2\delta \xi_2-\mathcal{Z}_2, 2\delta \xi_3-\mathcal{Z}_3 \}$ is an ideal isomorphic to $\mathfrak{so}(3)$;
\medskip
\item $\psi$ invertible ($\Rightarrow {\mathcal Z}=0,\; \delta\ne0$):\smallskip\\
$\h$ is an abelian ideal, and the non zero brackets are given by
	\[
	[\xi_i,\xi_j]=2\delta \xi_k, \qquad [\xi_i,X]= \delta \varphi_i X,
	\]
	for any even permutation $(i,j,k)$ of $(1,2,3)$,  $X\in\mathfrak h$; therefore $\g\cong\mathfrak{so}(3)\ltimes \mathbb R^{4n}$.
\end{itemize}

\

The information in the above scheme turns out to be particularly helpful in those cases where either $\psi=0$ or $\psi$ is invertible.
This is the case of $7$-dimensional almost $3$-contact Lie algebras with abelian structure, which in fact we completely classify (Theorem \ref{theo7dim}). A second case is given by almost $3$-contact Lie algebras of dimension $4n+3$ endowed with a \emph{canonical abelian} structure.

\

\emph{Canonical abelian almost $3$-contact structures} are defined in Section \ref{section-canonical}. In order to clarify the geometric motivation in introducing such a notion, it is worth spending a few words on the role of connections with totally skew-symmetric torsion (skew torsion for short) in Riemannian geometry. For details we refer to \cite{Ag}. On a Riemannian manifold $(M,g)$ one can distinguish $8$ classes of geometric torsion tensors for a metric connection. Among them metric connections with skew torsion are one of the most studied. They have the same geodesics as the Levi-Civita connection and arise naturally in large classes of manifolds. We have already mentioned the case of KT and HKT structures in the context of Hermitian and hyperHermitian geometry, where the Bismut connection is now a well-established tool. Moreover, there is a vast class of manifolds admitting a metric connection $\nabla$ with $\nabla$-parallel skew torsion, in which case interesting geometric properties are satisfied: the curvature tensor of the connection is pair symmetric and the $\nabla$-Ricci tensor is symmetric (\cite{IP,CS,CMS}).

Recently, metric connections with totally skew-symmetric torsion appeared in the investigation of various classes of almost $3$-contact metric manifolds (\cite{AF,AFS,AgDi,DOP}). By an almost $3$-contact metric manifold we mean a differentiable manifold endowed with a triple of almost contact structures $(\varphi_i,\xi_i,\eta_i)$, $i=1,2,3$, satisfying the same equations as in \eqref{3-sasaki1}, and a compatible Riemannian metric. For these manifolds the Levi-Civita connection is not well adapted to their structure, essentially because they do not appear in Berger's theorem on irreducible Riemannian holonomies. On the other hand, if one looks for \lq good' metric connections with skew torsion, the existence of a connection parallelizing all the structure tensor fields is not guaranteed. $3$-Sasakian manifolds, which are the most famous class of almost $3$-contact metric manifolds, do not admit such a connection.

In \cite{AgDi} the class of \emph{canonical} almost $3$-contact metric manifolds has been introduced, and characterized as those manifolds admitting a (unique) metric connection $\nabla$ with totally skew-symmetric torsion such that
\[\nabla_X\varphi_i\, =\, \beta(\eta_k(X)\varphi _j -\eta_j(X)\varphi _k)\]
for every even permutation $(i,j,k)$ of $(1,2,3)$, and vector field $X$, and for some smooth function $\beta$. The connection $\nabla$ is called \emph{canonical}, and also satisfies
\[ \nabla_X\xi_i\, =\, \beta(\eta_k(X)\xi_j -\eta_j(X)\xi_k), \qquad \nabla_X\eta_i\, =\, \beta(\eta_k(X)\eta_j -\eta_j(X)\eta_k).\]
 When $\beta$ vanishes, $\nabla$ parallelizes all the structure tensors, in which cases the structure is called \emph{parallel canonical}.

 \

Now, considering a Lie algebra $\g$ with abelian almost $3$-contact structure $(\varphi_i,\xi_i,\eta_i)$, if $G$ is any Lie group with Lie algebra $\g$, endowed with the corresponding left invariant structure $(\varphi_i,\xi_i,\eta_i)$ and a compatible left invariant Riemannian metric $g$, it is natural to require the Lie group $(G,\varphi_i,\xi_i,\eta_i,g)$ to be canonical in the sense of \cite{AgDi}, thus admitting a canonical connection. We show that this is equivalent to requiring
\[{\mathcal Z}=0,\qquad \psi=-\frac{\beta}{2} I,\]
where $\beta\in\R$. Therefore, for a canonical abelian almost $3$-contact metric structure, we have either $\psi=0$ or $\psi$ invertible, according to $\beta=0$ or $\beta\ne0$, which correspond to the parallel and non-parallel case respectively. In both cases we further describe the Lie algebras, provide examples, and discuss the parallelism of the torsion of the canonical connection.

In the parallel case, remarkable examples of Lie groups endowed with a canonical abelian structure are
\[H_n^{\H},\qquad H_{n}^{\mathbb C}\times \mathbb R,\qquad H_{2n}^{\mathbb R}\times {\mathbb R}^2,\]
where $H_n^{\H}, H_{n}^{\mathbb C}, H_{2n}^{\mathbb R}$  are respectively the quaternionic, the complex and the real Heisenberg group, of real dimensions $4n+3$, $4n+2$, $4n+1$. In all these cases the Lie algebra is the central extension of the abelian Lie algebra $\R^{4n}$ by a $J_i$-invariant $\R^3$-valued $2$-cocycle, where $\{J_i\}_{i=1,2,3}$ is the standard hypercomplex structure on $\R^{4n}$. In the non-parallel case, every Lie group admitting a left invariant canonical abelian structure is isomorphic to the semidirect product $\operatorname{SU}(2)\ltimes \H^n$. We show that this Lie group admits co-compact discrete subgroups, so that the associated compact quotients also admit structures of the same type.

\medskip

\

\noindent \textsc{Acknowledgements.} The authors would like to thank I. Agricola, M. L. Barberis, I. Dotti and A. Tolcachier for helpful comments. The first author is very grateful to the Dipartimento di Matematica at the Universit\`a degli Studi di Bari Aldo Moro for the warm hospitality during his visit.

\

\section{Preliminaries}

\subsection{Abelian complex and hypercomplex structures on Lie algebras}
	
A complex structure on a real Lie algebra $\mathfrak g$ is an endomorphism $J$ of $\mathfrak g$  satisfying $J^2=-I$ and such that
   \begin{equation}\label{nijen}
	J[x,y]-[Jx,y]-[x,Jy]-J[Jx,Jy]=0, \quad \forall\, x,y \in \mathfrak g.
	\end{equation}
It is well known that \eqref{nijen} holds if and only if ${\mathfrak g}^{1,0}$, the $i$-eigenspace of $J$, is a complex subalgebra of $\mathfrak g ^{\mathbb C} := {\mathfrak g}\otimes_\mathbb R \mathbb C$. This endomorphism $J$ of $\g$ gives rise to an integrable almost complex structure on any Lie group $G$ with Lie algebra $\g$, such that the left translations are holomorphic maps of $G$.
	
An \textit{abelian} complex structure on $\mathfrak g$ is an endomorphism $J$ of $\mathfrak g$ satisfying
	\begin{equation} \label{abel1}
	J^2=-I, \hspace{1.5cm} [Jx,Jy]=[x,y], \quad	\forall x,y \in \g.
	\end{equation}
It follows that \eqref{abel1} is a particular case of \eqref{nijen}; moreover, condition \eqref{abel1} is equivalent to
$\mathfrak g^{1,0}$ being abelian. These structures were first considered in \cite{BDM}.
	
Next, we include some properties about abelian complex structures in the following lemma (see \cite{ABD,BD,Pe} for their proofs).
	
\begin{lemma}\label{lemma-J}
Let $\mathfrak g$ be a Lie algebra with $\mathfrak{z}(\mathfrak g)$ its center and $\mathfrak g':=[\mathfrak g,\mathfrak g]$ its commutator ideal. If $J$ is an abelian complex structure on $\mathfrak g$, then
	\begin{enumerate}
			\item[\textsc{1.}] $J\mathfrak z(\mathfrak g)=\mathfrak z(\mathfrak g)$.
			\item[\textsc{2.}] $\mathfrak g' \cap J\mathfrak g'\subset \mathfrak z(\mathfrak g'+J\mathfrak g')$.
			\item[\textsc{3.}] The codimension of $\mathfrak g'$ is at least $2$, unless $\mathfrak g$ is isomorphic to
			$\mathfrak{aff}(\mathbb R)$ (the only $2$-di\-men\-sional non-abelian Lie algebra).
			\item[\textsc{4.}] $\mathfrak g'$ is abelian, therefore $\mathfrak g$ is $2$-step solvable.
	\end{enumerate}
\end{lemma}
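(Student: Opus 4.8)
The starting point is to rewrite the abelian condition \eqref{abel1} in the equivalent form
\[ [Jx,y]=-[x,Jy],\qquad \forall\, x,y\in\g, \]
obtained by replacing $x$ with $Jx$ in $[Jx,Jy]=[x,y]$ and using $J^2=-I$. Statement 1 is immediate from this: if $x\in\z(\g)$ then $[Jx,y]=-[x,Jy]=0$ for all $y$, so $Jx\in\z(\g)$; thus $J\z(\g)\subseteq\z(\g)$, and applying $J$ once more together with $J^2=-I$ gives the reverse inclusion, whence $J\z(\g)=\z(\g)$.

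The heart of the lemma is statement 4, which I expect to be the main obstacle. The cleanest route passes to the complexification: the abelian condition is exactly the assertion that the $i$-eigenspace $\g^{1,0}$, and hence its conjugate $\g^{0,1}$, is an abelian subalgebra of $\g^\C=\g^{1,0}\oplus\g^{0,1}$. Thus $\g^\C$ is a sum of two abelian subalgebras, and by Petravchuk's theorem (\cite{Pe}) such a Lie algebra is metabelian, i.e. $[(\g^\C)',(\g^\C)']=0$. Since $(\g^\C)'=(\g')^\C$, the commutator $\g'$ is abelian and $\g$ is $2$-step solvable. Alternatively one can prove $[\g',\g']=0$ directly by expanding $[[x,y],[z,w]]$ via the Jacobi identity and repeatedly using the displayed identity to move $J$ across brackets; this is elementary but lengthy, which is why the reduction to the known sum-of-abelians result is preferable.

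Statement 2 then follows quickly. First note that $\g'\cap J\g'$ is the largest $J$-invariant subspace contained in $\g'$, since $z\in\g'\cap J\g'$ if and only if both $z$ and $Jz$ lie in $\g'$. Now let $z\in\g'\cap J\g'$. For $u\in\g'$ we have $[z,u]=0$ by statement 4; and for $u=Jv\in J\g'$ with $v\in\g'$, the identity gives $[z,Jv]=-[Jz,v]=0$, because $Jz\in\g'\cap J\g'\subseteq\g'$. Hence $\g'\cap J\g'$ centralizes $\g'+J\g'$, which is statement 2.

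Finally, for statement 3 assume $\g'$ has codimension one and pick $x_0\in\g\setminus\g'$, so $\g=\R x_0\oplus\g'$. Since $\g'$ is abelian, $\g'=[\g,\g]=\ad_{x_0}(\g')$, so $A:=\ad_{x_0}|_{\g'}$ is surjective, hence invertible. Write $Jx_0=\alpha x_0+g_0$ with $\alpha\in\R$, $g_0\in\g'$, and for $y\in\g'$ write $Jy=\beta(y)x_0+h(y)$ with $h(y)\in\g'$. Applying the identity to $x_0$ and $y$ gives $\alpha Ay=[Jx_0,y]=-[x_0,Jy]=-Ah(y)$, so by invertibility of $A$ we get $h(y)=-\alpha y$, that is $Jy=\beta(y)x_0-\alpha y$. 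Imposing $J^2=-I$ on $y\in\g'$ then forces $\beta(y)\,g_0=-(1+\alpha^2)\,y$. Since $1+\alpha^2\neq0$, this shows every $y\in\g'$ is a scalar multiple of $g_0$, so $\dim\g'\leq1$; as $\g'=0$ would give $\dim\g=1$, which admits no complex structure, we conclude $\dim\g'=1$, hence $\dim\g=2$ and $\g\cong\aff(\R)$. This completes the proof.
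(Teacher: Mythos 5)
Your proof is correct in all four parts. Note, however, that the paper does not actually prove Lemma \ref{lemma-J}: it is stated with a pointer to the references \cite{ABD,BD,Pe}, so there is no in-paper argument to compare against. Your self-contained treatment is sound: the identity $[Jx,y]=-[x,Jy]$ correctly yields part 1; for part 4, the reduction to the complexification $\g^\C=\g^{1,0}\oplus\g^{0,1}$ as a vector-space sum of two abelian subalgebras, followed by Petravchuk's metabelianity result, is exactly the mechanism the paper itself invokes later in the proof of Lemma \ref{2step} for the almost contact analogue, so your route is entirely in the spirit of the article; part 2 follows cleanly from part 4 together with $J(\g'\cap J\g')=\g'\cap J\g'$; and the codimension-one analysis in part 3 (invertibility of $\ad_{x_0}|_{\g'}$, forcing $h(y)=-\alpha y$ and then $\beta(y)g_0=-(1+\alpha^2)y$) is correct and pins down $\dim\g'=1$, $\g\cong\aff(\R)$. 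The only cosmetic omission is the codimension-zero case $\g'=\g$, which is instantly excluded since part 4 would give $\g=[\g,\g]=[\g',\g']=0$; you may want to add that half-sentence for completeness.
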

	
\

If $g$ is an inner product on $\g$ which is compatible with an abelian complex structure $J$, then $(J,g)$ is called an \textit{abelian Hermitian structure} on $\g$.  In \cite{ABD1} many properties of abelian Hermitian structures were established. We mention the following result concerning K\"ahler structures, which will be used in forthcoming sections. Recall that a Hermitian structure is called \textit{K\"ahler} if the fundamental $2$-form $\Omega=g(\cdot,J\cdot)$ is closed.
	
\begin{theorem}\cite[Theorem 4.1]{ABD1}\label{kahler-abelian}
	Let $(\mathfrak g, J, g)$ be a K\"ahler Lie algebra with $J$ an abelian complex structure. Then $\mathfrak g$ is isomorphic to
	\[ \mathfrak{aff}(\mathbb R)\times \cdots \times \mathfrak{aff}(\mathbb R)\times \mathbb{R}^{2s},\]
	and this decomposition is orthogonal and $J$-stable.
\end{theorem}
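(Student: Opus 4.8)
The plan is to extract everything from the closedness of $\Omega=g(\cdot,J\cdot)$ via the Chevalley--Eilenberg formula
\[ d\Omega(x,y,z)=-\Omega([x,y],z)-\Omega([y,z],x)-\Omega([z,x],y)=0, \]
used together with the two algebraic facts forced by the hypotheses: $J$ is $g$-skew (compatibility), and $\ad_{Jx}=-\ad_x\circ J$ (a direct restatement of $[Jx,Jy]=[x,y]$). The first move is to split off the centre. Setting $z=\zeta\in\z(\g)$ annihilates the last two terms, so $\Omega([x,y],\zeta)=g([x,y],J\zeta)=0$; since $J\z(\g)=\z(\g)$ by Lemma~\ref{lemma-J}\,(1), this yields $\g'\perp\z(\g)$. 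Hence $\z(\g)^\perp$ is a $J$-invariant subalgebra containing $\g'$, and because $\z(\g)$ is central we get an orthogonal, $J$-stable direct product $\g=\z(\g)\times\h$ with $\z(\g)\cong\R^{2s}$ and $\h=\z(\g)^\perp$ of trivial centre. So it suffices to treat a Kähler abelian algebra $\h$ with $\z(\h)=0$ and prove $\h\cong\aff(\R)^k$.

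Next I would encode the bracket of $\h$ by a family of endomorphisms. Write $\a=\h'$; by Lemma~\ref{lemma-J}\,(4) $\a$ is abelian, the abelian relation makes $J\a$ abelian as well, and $\n:=\a+J\a$ is a $J$-invariant ideal. Since $[\a,\a]=[J\a,J\a]=0$ and $[\a,J\a]\subset\a$, the whole bracket on $\n$ is carried by $T_u:=\ad_{Ju}|_{\a}\in\operatorname{End}(\a)$, $u\in\a$. The Jacobi identity on $(Ju,Jv,w)$ gives $[T_u,T_v]=0$; the identity $[Ju,v]=[Jv,u]$ (again from $\ad_{Jx}=-\ad_xJ$) gives $T_uv=T_vu$; and substituting $x=Ju,\,y=Jw,\,z=v$ with $u,v,w\in\a$ into $d\Omega=0$ gives $g(T_uv,w)=g(T_wv,u)$. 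Thus the trilinear form $\tau(u,v,w)=g(T_uv,w)$ is totally symmetric, so $\{T_u\}$ is a commuting family of $g$-self-adjoint operators and can be simultaneously orthodiagonalised: there is a $g|_{\a}$-orthonormal basis $f_1,\dots,f_k$ of $\a$ with $T_{f_m}=c_mE_{mm}$.

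The decomposition then reads off from the blocks. Whenever $c_m\neq0$ one has $[Jf_m,f_m]=c_mf_m$, so $\langle f_m,Jf_m\rangle$ is a $2$-dimensional ideal isomorphic to $\aff(\R)$; distinct such planes commute, and once one knows $\a\perp J\a$ they are mutually orthogonal and $J$-stable. A joint eigenvector with $c_m=0$ would be a vector of $\g'$ commuting with all of $\n$, hence — given the centrality of $\n^\perp$ — central in $\h$, contradicting $\z(\h)=0$; so all $c_m\neq0$, $\h\cong\aff(\R)^k$, and restoring $\z(\g)\cong\R^{2s}$ gives the claim.

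The hard part is the isotropy statement $\g'\perp J\g'$, i.e. $\Omega|_{\g'}=0$: this is precisely what makes the splitting orthogonal, forces $\n^\perp$ to be central, and rules out the degenerate eigenvalues $c_m=0$. Diagonalising $\{T_u\}$ reduces it considerably, since $\Omega|_{\a}$ is then seen to be supported on the joint kernel $\a_0=\{v:T_uv=0\ \forall u\}$, so the residual obstacle is to prove $\a_0=0$. For this I would study $\ad_w|_{\a}$ for $w\in\n^\perp$, using that $\ad_w(\g)\subset\g'$ and $\ad_w^{*}=J\,\ad_w\,J$ (the latter being equivalent to the self-adjointness of $J\ad_w$ that $d\Omega=0$ forces on $w\in\n^\perp$), to show first that $\ad_w$ annihilates $(J\g')^\perp$ and then that all bracket images are orthogonal to $\a_0$; this gives $\a\perp\a_0$, whence $\a_0\perp\a_0$ and $\a_0=0$. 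This last orthogonality is where the genuine computation lives, and where I expect the main difficulty to be.
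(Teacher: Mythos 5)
The paper offers no proof of this statement --- it is imported wholesale from \cite[Theorem 4.1]{ABD1} --- so your proposal has to be judged on its own, and as it stands it is an incomplete reduction rather than a proof. The parts you carry out are correct: $d\Omega(x,y,\zeta)=0$ for central $\zeta$ together with $J\z(\g)=\z(\g)$ gives $\g'\perp\z(\g)$ and hence the orthogonal $J$-stable splitting $\g=\z(\g)\times\h$ with $\z(\h)=0$; on $\h$ the operators $T_u=\ad_{Ju}|_{\a}$ (with $\a=\h'$) do commute, are $g$-self-adjoint with $T_uv=T_vu$, and simultaneous diagonalisation does give $T_{f_m}=c_mE_{mm}$; and $\Omega|_{\a}$ is indeed supported on the joint kernel $\a_0$. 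But every structural conclusion of the theorem --- that the planes $\langle f_m,Jf_m\rangle$ are mutually orthogonal ideals, that $\n^\perp$ is central and hence zero, that no degenerate eigenvalues occur --- is made to rest on the single unproved claim $\a_0=0$ (equivalently $\g'\perp J\g'$), which you explicitly leave as ``where the genuine computation lives''. That claim \emph{is} the theorem; what precedes it is correct but essentially formal bookkeeping.

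Moreover, the ingredients you propose for closing this gap do not suffice by themselves. Set $A:=P_{\a_0}\circ\ad_w|_{\a_0}$ for $w\in\n^\perp$ and let $\omega_0:=\Omega|_{\a_0}$, which your own observations show to be nondegenerate (a $v\in\a_0$ with $\Omega(v,\a_0)=0$ lies in $(J\a)^\perp=\operatorname{Ker}\ad_w$ for all $w\in\n^\perp$, hence in $\z(\h)=0$); the goal is $A=0$ for all such $w$. The three facts you cite --- $\ad_w^{*}=J\ad_wJ$, the support of $\Omega|_{\a}$ on $\a_0$, and the symmetry of $(v,v')\mapsto\Omega([w,v],v')$ coming from $d\Omega(w,v,v')=0$ --- translate precisely into the identities $A=\omega_0A\omega_0$ and $A^{T}\omega_0=-\omega_0A$ on $\a_0$, and these admit nonzero solutions (already for $\dim\a_0=2$ with $\omega_0$ the standard symplectic matrix, every symmetric traceless $A$ satisfies both). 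So ruling out $\a_0\neq0$ requires input you have not identified --- Jacobi identities mixing $w$, $Jw$ and $\a_0$, or the interaction of $\ad_w$ with the nondegenerate blocks of $\a$ --- and not merely more patience with the relations already on the table. Until that is supplied, the proposal establishes the theorem only modulo its hardest step.
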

	
Moreover, it follows from the proof of this theorem that, if there are $k$ factors isomorphic to $\aff(\R)$, then there exists an orthonormal basis $\{e_1,f_1,\ldots,e_k,f_k\}$ of $\aff(\R)^k$ and real numbers $r_i\neq 0$, $i=1,\ldots,k$, such that $[e_i,f_i]=r_if_i$ and $Je_i=f_i$ for all $i$.

\

We recall now another generalization of abelian complex structures. A hypercomplex structure on a Lie algebra $\g$ is a triple $\{J_1,J_2,J_3\}$ of complex structures on $\g$ which obey the laws of the quaternions:
\[  J_1J_2=-J_2J_1=J_3.  \]
If $\g$ admits such a structure, then the dimension of $\g$ is a multiple of $4$. Moreover, $\g$ carries a sphere $S^2$ of complex structures. Indeed, if $a=(a_1,a_2,a_3)\in S^2$ then
\begin{equation}\label{J-sphere}
	J_a:=a_1J_1+a_2J_2+a_3J_3
\end{equation}
is a complex structure on $\g$.

\smallskip

A hypercomplex structure $\{J_1,J_2,J_3\}$ on $\g$ is called \textit{abelian} if each complex structure $J_i$, $i=1,2,3$, is abelian. In this case, it can be seen that each complex structure in the associated sphere is abelian. Indeed, a stronger result was proved in \cite{DF2}: if $\{J_1,J_2,J_3\}$ is a hypercomplex structure on $\g$ and one complex structure in the associated sphere is abelian, then all the complex structures in the sphere are abelian.

\smallskip

The classification of hypercomplex structures on $4$-dimensional Lie algebras was carried out in \cite{Bar}. In particular, we can deduce from this classification the following result concerning abelian hypercomplex structures:

\begin{proposition}\cite{Bar}\label{hypercomplex-4d}
	The only $4$-dimensional Lie algebras which admit an abelian hypercomplex structure are the abelian one, i.e. $\R^4$, and $\aff(\C)$.  In the first case the abelian hypercomplex structure is unique (up to hypercomplex isomorphism), whereas in the second case any hypercomplex structure is abelian and the equivalence classes of abelian hypercomplex structures are parametrized by $\R P^2$.
\end{proposition}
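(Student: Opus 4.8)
The plan is to combine the structural constraints of Lemma \ref{lemma-J} with the explicit classification of \cite{Bar} to narrow the possibilities, and then to verify the abelian condition $[J_ix,J_iy]=[x,y]$ directly on the survivors. Suppose $\mathfrak g$ is $4$-dimensional with an abelian hypercomplex structure $\{J_1,J_2,J_3\}$. First I would record three consequences of Lemma \ref{lemma-J}: $\mathfrak g$ is $2$-step solvable, so $\mathfrak g'$ is abelian; the codimension of $\mathfrak g'$ is at least $2$ (the exceptional case $\aff(\R)$ being $2$-dimensional), so $\dim\mathfrak g'\le 2$; and $\mathfrak z(\mathfrak g)$ is $J_i$-invariant for every $i$. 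Being invariant under all three complex structures, $\mathfrak z(\mathfrak g)$ is a quaternionic subspace of $\mathfrak g\cong\mathbb{H}$, hence of real dimension $0$ or $4$. If $\dim\mathfrak z(\mathfrak g)=4$ then $\mathfrak g=\R^4$; otherwise $\mathfrak z(\mathfrak g)=0$ and $\dim\mathfrak g'\in\{1,2\}$.

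The next step is to rule out $\dim\mathfrak g'=1$. Writing $\mathfrak g'=\R v$, the bracket takes the form $[x,y]=\omega(x,y)\,v$ for a nonzero $\omega\in\Lambda^2\mathfrak g^*$, and the abelian condition forces $\omega(J_ix,J_iy)=\omega(x,y)$ for $i=1,2,3$. Choosing a compatible hyperhermitian inner product $g$ and writing $\omega=g(A\cdot,\cdot)$, invariance of $\omega$ under each $J_i$ is equivalent to $A$ commuting with $J_1,J_2,J_3$; hence $A$ is right-multiplication by a quaternion, and being skew-symmetric and nonzero it is right-multiplication by a nonzero imaginary quaternion, so $A$ is invertible and $\omega$ is nondegenerate. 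But then $\iota_v\omega\wedge\omega=\tfrac12\iota_v(\omega\wedge\omega)\ne 0$, which is exactly the failure of the Jacobi identity for $[x,y]=\omega(x,y)v$. Thus no Lie algebra arises and $\dim\mathfrak g'=1$ is impossible.

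There remains the case $\dim\mathfrak g'=2$, which I expect to be the main obstacle and where I would lean on \cite{Bar}. Here $\mathfrak a:=\mathfrak g'$ is a $2$-dimensional abelian ideal with $\mathfrak z(\mathfrak g)=0$, and the goal is to show $\mathfrak g\cong\aff(\C)$. A useful tool is the identity $\ad_{J_ix}=-\ad_x\circ J_i$, equivalent to the abelian condition, which tightly constrains how a complement of $\mathfrak a$ acts on $\mathfrak a$; together with $2$-step solvability and $\mathfrak z(\mathfrak g)=0$ this should pin the brackets down to those of $\aff(\C)$ in a suitable basis. Alternatively, one reads off from the finite list in \cite{Bar} of $4$-dimensional Lie algebras admitting hypercomplex structures that, once $\dim\mathfrak g'=2$, $\mathfrak z(\mathfrak g)=0$ and $2$-step solvability are imposed, the only possibility is $\aff(\C)$; the other non-abelian members of that list (for instance $\R\oplus\mathfrak h_3$, with $\dim\mathfrak g'=1$, or the $\R H^4$-type algebra with $\dim\mathfrak g'=3$) are already excluded by the dimension count above.

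Finally I would address the moduli statements. On $\R^4$ every hypercomplex structure is automatically abelian, and two of them are hypercomplex-isomorphic precisely when the associated $\mathbb{H}$-module structures on $\R^4$ are isomorphic; since $\R^4$ carries a unique rank-one $\mathbb{H}$-module structure up to isomorphism, the abelian hypercomplex structure is unique up to hypercomplex isomorphism. For $\aff(\C)$, the facts that every hypercomplex structure is abelian and that the equivalence classes form $\mathbb{R}P^2$ I would take from the explicit parametrization in \cite{Bar}: the sphere $S^2$ of complex structures attached to a hypercomplex structure, quotiented by the residual action of $\operatorname{Aut}(\aff(\C))$ together with the sign ambiguity, yields $S^2/\{\pm 1\}=\mathbb{R}P^2$. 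That the whole family consists of abelian structures is then a direct check on the brackets of $\aff(\C)$, using that its commutator is $2$-dimensional and abelian.
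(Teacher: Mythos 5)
The paper offers no proof of this proposition: it is stated as a direct consequence of Barberis's classification in \cite{Bar}, and the only supporting detail given afterwards (the explicit sphere \eqref{aff-C} and the fact that every hypercomplex structure on $\aff(\C)$ is equivalent to one inside it) is likewise quoted from the proof of Theorem 3.3 in \cite{Bar}. Your proposal is therefore a hybrid: for the hardest claims --- that the case $\dim\g'=2$ with trivial center forces $\g\cong\aff(\C)$, that every hypercomplex structure on $\aff(\C)$ is abelian, and that the moduli space is $\R P^2$ --- you ultimately fall back on \cite{Bar}, exactly as the paper does, so there is no gap relative to the paper there; but your first alternative for the $\dim\g'=2$ case (using $\ad_{J_ix}=-\ad_x\circ J_i$ to ``pin the brackets down'') is only a sketch and would need to be carried out to count as an independent proof. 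What you add that the paper does not have is genuinely correct and worthwhile: Lemma \ref{lemma-J} gives $\dim\g'\le 2$ and $J_i$-invariance of $\z(\g)$, whence $\z(\g)$ is an $\H$-submodule of $\H$ and so has dimension $0$ or $4$; and your exclusion of $\dim\g'=1$ is a clean, complete argument (the $J_i$-invariant $2$-form defining the bracket must be $g(R_q\cdot,\cdot)$ with $q$ imaginary, hence symplectic, and $(\iota_v\omega)\wedge\omega=\tfrac12\iota_v(\omega\wedge\omega)\neq 0$ contradicts Jacobi). Two small cautions. First, $\R\oplus\h_1^{\R}$ is \emph{not} on Barberis's list of $4$-dimensional Lie algebras admitting hypercomplex structures (the non-abelian entries are $\R\times\mathfrak{su}(2)$, $\aff(\C)$, and the solvable Lie algebras of $\R H^4$ and $\C H^2$, all of which except $\aff(\C)$ have $3$-dimensional commutator); your parenthetical misquotes the list, although this does not affect the logic since your symplectic argument independently kills $\dim\g'=1$. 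Second, your heuristic $S^2/\{\pm 1\}=\R P^2$ for the moduli on $\aff(\C)$ is a plausible reading but is not a proof; the paper's (and \cite{Bar}'s) statement is that every hypercomplex structure is of the form $\{J_a,J_b,J_{a\times b}\}$ inside one fixed sphere, and the identification of equivalence classes with $\R P^2$ requires the explicit description of the residual automorphism action, which you would have to take from \cite{Bar} in any case.
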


Recall that $\aff(\C)$ is the Lie algebra with basis $\{e_1,e_2,e_3,e_4\}$ whose Lie bracket is given by
\[ [e_1,e_3]=e_3, \; [e_1,e_4]=e_4, \; [e_2,e_3]=e_4,\; [e_2,e_4]=-e_3.  \]
It follows from the proof of Theorem 3.3 in \cite{Bar} that any hypercomplex structure on $\aff(\C)$ is equivalent to $\{J_a,J_b,J_{a\times b}\}$ (as in \eqref{J-sphere}), for some $a,b\in S^2$ such that $a\perp b$, where $\{J_1,J_2,J_3\}$ is given in the basis above by
	\begin{equation}\label{aff-C}
	J_1=\begin{pmatrix} & 1 && \\ -1 &&& \\ &&& -1\\ && 1 & \end{pmatrix}, \quad J_2=\begin{pmatrix} && -1 & \\  &&& -1 \\ 1 &&& \\ & 1 && \end{pmatrix}, \quad J_3=\begin{pmatrix} &&& -1 \\ && 1 & \\ & -1 && \\ 1 &&& \end{pmatrix}.
	\end{equation}
Therefore, the spheres determined by $\{J_a,J_b,J_{a\times b}\}$  and $\{J_1,J_2,J_3\}$ coincide.

\

In forthcoming sections we will have to cope with central extensions of Lie algebras equipped with abelian complex or hypercomplex structures. We recall this notion here. Let $\h$ be a Lie algebra and $V$ a finite-dimensional vector space. A $V$-valued $p$-form on $\h$ is an element $\sigma\in \alt^p\h^\ast \otimes V$, that is, a skew-symmetric multilinear function $\sigma:\h\times \cdots \times \h \to V$. The exterior derivative $d$ on $\h$ can be extended to $V$-valued $p$-forms in the following way: if $\{e_i\}$ is a basis of $V$, then $\sigma$ can be written as $\sigma=\sum_i \sigma_i\otimes e_i$, with $\sigma_i\in\alt^p\h^\ast$, and $d\sigma\in \alt^{p+1}\h^\ast \otimes V$ is defined as $d\sigma=\sum_i d\sigma_i\otimes e_i$. This definition does not depend on the chosen basis. If $d\sigma=0$, then $\sigma$ is called a $V$-valued $p$-cocycle. Note that when $\dim V=1$ we recover the usual notion of $p$-forms and exterior derivative.

Let us fix now a $V$-valued $2$-cocycle $\sigma$. On the vector space $V\oplus \h$ we consider the bracket $[\cdot,\cdot]'$ defined by:
\[  [V,V\oplus \h]'=0, \qquad [X,Y]'=  \sigma(X,Y)+[X,Y], \quad X,Y\in\h.  \]
It follows from $d\sigma=0$ that this bracket satisfies the Jacobi identity; clearly, $V$ is a central subalgebra. The Lie algebra $(V\oplus \h,[\cdot,\cdot]')$ is called the central extension of $\h$ by the $V$-valued $2$-cocycle $\sigma$, and it is denoted $V\oplus_{\sigma}\h$. In this article we will deal only with $V=\R$ or $V=\R^3$.
	
\

\subsection{Almost contact structures}
	
An \emph{almost contact structure} on a differentiable manifold $M^{2n+1}$ is a triple $(\varphi,\xi,\eta)$, where $\varphi$ is a $(1,1)$-type tensor field, $\xi$ a vector field, and $\eta$ a $1$-form satisfying
	\[\varphi^2=-I+\eta\otimes\xi, \quad\eta(\xi)=1,\]
which imply $\varphi(\xi)=0$ and $\eta\circ\varphi=0$. The tangent bundle splits as $TM^{2n+1}={\mathcal D}\oplus{\mathcal L}$, where $\mathcal D=\mathrm{Ker}\,\eta=\mathrm{Im}\,\varphi$ and ${\mathcal L}$ is the line bundle spanned by $\xi$.
	
On the product manifold $M^{2n+1}\times \mathbb{R}$ one can define an almost complex structure $J$ by
	\[J\left(X,f\frac{d}{dt}\right)=\left(\varphi
	X-f\xi,\eta(X)\frac{d}{dt}\right),\]
where $X$ is a vector field tangent to $M^{2n+1}$, $t$ is the coordinate on $\mathbb{R}$ and $f$ is a smooth function on $M^{2n+1}\times \mathbb{R}$. If $J$ is integrable, the almost contact structure is said to be \emph{normal}. This is equivalent to the
vanishing of the tensor field
	\begin{equation}\label{normality-tensor}
	N_\varphi:=[\varphi,\varphi]+d\eta\otimes\xi,
	\end{equation}
where $[\varphi,\varphi]$ is the Nijenhuis torsion of $\varphi$ defined by
\[ [\varphi,\varphi](X,Y)=[\varphi X,\varphi Y]+\varphi^2[X,Y]-\varphi[\varphi X,Y]-\varphi[X,\varphi Y].\]
	
An \emph{almost contact metric structure} $(\varphi, \xi, \eta, g)$	is given by an almost contact structure and a \emph{compatible}
Riemannian metric $g$ satisfying $g(\varphi X,\varphi Y)=g(X,Y)-\eta(X)\eta(Y)$ for any vector fields $X$ and $Y$. Then, the \emph{fundamental $2$-form} $\Phi$ is defined by $\Phi(X,Y)=g(X,\varphi Y)$ for any vector fields $X$ and $Y$. For more details, we refer to \cite{BLAIR}.

\
	
An \emph{almost contact Lie algebra} is a $(2n+1)$-dimensional real Lie algebra ${\mathfrak g}$ endowed with a triple $(\varphi,\xi,\eta)$ with $\varphi\in \mathrm{End}({\mathfrak g})$, $\xi\in{\mathfrak g}$, $\eta\in{\mathfrak g}^*$ satisfying
	\[\varphi^2=-I+\eta\otimes \xi,\quad \eta(\xi)=1,\]
implying that $\varphi(\xi)=0$ and $\eta\circ\varphi=0$. The Lie algebra splits as $\mathfrak g=\mathfrak h\oplus \mathbb{R}\xi$, where $\mathfrak h:=\mathrm{Ker}\,\eta=\mathrm{Im}\,\varphi$. The structure is said to be \emph{normal} if the tensor $N_\varphi$ defined as in \eqref{normality-tensor} vanishes on $\mathfrak g$. An \emph{almost contact metric Lie algebra} $({\mathfrak g},\varphi,\xi,\eta,g)$ is an almost contact Lie algebra endowed with an inner product $g$ satisfying $g(\varphi X,\varphi Y)=g(X,Y)-\eta(X)\eta(Y)$, so that ${\mathfrak h}=\langle\xi\rangle^\perp$. The fundamental $2$-form is defined by $\Phi(X,Y)=g(X,\varphi Y)$. Since in this paper we will be interested in a special class of normal almost contact Lie algebras, we recall some remarkable classes of them. A normal almost contact metric Lie algebra is called
		\begin{itemize}
			\item[-] \emph{$\alpha$-Sasakian} if $d\eta=2\alpha\Phi$ for some $\alpha\in\mathbb{R}^*$, Sasakian for $\alpha=1$;
			\item[-] \emph{coK\"ahler} if $d\eta=0$ and $d\Phi=0$;
			\item[-] \emph{$\alpha$-Kenmotsu} if $d\eta=0$, $d\Phi=2\alpha\eta\wedge\Phi$ for some $\alpha\in\mathbb{R}^*$, Kenmotsu for $\alpha=1$;
			\item[-] \emph{quasi-Sasakian} if $d\Phi=0$ (this class includes $\alpha$-Sasakian and coK\"ahler structures).
		\end{itemize}
Regarding coK\"ahler and $\alpha$-Kenmotsu structures, some other terminologies can be found in the literature. For instance,
coK\"ahler structures are also known as cosymplectic structures (\cite{FV}). In \cite{CP} a normal almost contact metric structure satisfying $d\eta=0$ and $d\Phi=2\alpha\eta\wedge\Phi$, for some $\alpha\in\mathbb{R}$, is called \emph{$\alpha$-coK\"ahler}.

\

\subsection{Almost $3$-contact structures}
An  \emph{almost $3$-contact structure}  on a differentiable manifold $M^{4n+3}$ is given by three almost contact structures $(\varphi_i,\xi_i,\eta_i)$, $i=1,2,3$,
satisfying
\begin{equation*}
\begin{split}
\varphi_k=\varphi_i\varphi_j-\eta_j\otimes\xi_i=-\varphi_j\varphi_i+\eta_i\otimes\xi_j,\quad\\
\xi_k=\varphi_i\xi_j=-\varphi_j\xi_i, \quad
\eta_k=\eta_i\circ\varphi_j=-\eta_j\circ\varphi_i,
\end{split}
\end{equation*}
for any even permutation $(i,j,k)$ of $(1,2,3)$ (\cite{BLAIR}). The tangent bundle of $M^{4n+3}$ splits as $TM^{4n+3}={\mathcal H}\oplus{\mathcal V}$, where
\[
{\mathcal H} \, :=\, \bigcap_{i=1}^{3}\operatorname{Ker}\eta_i,\qquad
{\mathcal V}\, :=\, \langle\xi_1,\xi_2,\xi_3\rangle.
\]
In particular ${\mathcal H}$ has rank $4n$. We call any vector belonging to the distribution
${\mathcal H}$ \emph{horizontal} and any vector belonging to the distribution $\mathcal V$
\emph{vertical}. The manifold is said to  be
\emph{hypernormal} if each  almost contact structure
$(\varphi_i,\xi_i,\eta_i)$ is normal.
In \cite{yano1} it was proved that if two of the almost contact structures are normal,
then so is the third.

  The existence of an almost $3$-contact structure is equivalent to the existence of a sphere of almost contact structures $\{(\varphi_a,\xi_a,\eta_a) \ |\ a\in S^2 \}$ satisfying
\[
\varphi_a\circ\varphi_b-\eta_b\otimes\xi_a=\varphi_{a\times b}-(a\cdot b)\,I,\qquad
\varphi_a\xi_b=\xi_{a\times b}, \qquad
\eta_a\circ\varphi_b=\eta_{a\times b},
\]
for every $a,b\in S^2$, where $\cdot$ and $\times$ denote the standard inner product and cross product on $\mathbb{R}^3$.  If the structure is hypernormal, then each structure in the sphere is normal (\cite{CM-DN-Y}).	

Any almost $3$-contact manifold admits a Riemannian metric $g$ which is compatible with
each of the three structures. Then $M^{4n+3}$ is said to be an \emph{almost $3$-contact metric
manifold} with structure $(\varphi_i,\xi_i,\eta_i,g)$, $i=1,2,3$.
The subbundles $\mathcal H$
and $\mathcal V$ are orthogonal with respect to $g$ and the three Reeb vector
fields $\xi_1,\xi_2,\xi_3$ are orthonormal. The structure group of the tangent
bundle is in fact reducible to $\mathrm{Sp}(n)\times \{1\}$.

We will introduce analogous notions on Lie algebras in Section \ref{section-3contact}.

\
	
\section{Abelian almost contact structures on Lie algebras}

In this section we introduce the main notion of this article, namely abelian almost contact structures on Lie algebras (Definition \ref{definition-abelian}), and begin the study of their main properties, together with some examples.

\medskip

We start by analyzing when an almost contact structure on a Lie algebra is normal.

\begin{proposition}\label{proposition-normality}
An almost contact structure $(\varphi,\xi,\eta)$ on a Lie algebra ${\mathfrak g}$ is normal if and only if
		\begin{itemize}
			\item[\textsc{1.}] $\mathrm{ad}_\xi\circ\varphi=\varphi\circ \mathrm{ad}_\xi$,
			\item[\textsc{2.}] $[\varphi X,\varphi Y]-[X,Y]=\varphi([\varphi X,Y]+[X,\varphi Y])$ for every $X,Y\in\mathfrak h$.
		\end{itemize}
If the structure is normal, the subspace $\mathfrak h$ is $\mathrm{ad}_\xi$-invariant, i.e. $\eta([\xi,X])=0$ for any $X\in\mathfrak h$. Furthermore, for every $X,Y\in\h$
		\begin{equation}\label{normal-deta}
		d\eta(\varphi X,\varphi Y)=d\eta(X,Y).
		\end{equation}
\end{proposition}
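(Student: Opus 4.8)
The plan is to evaluate the normality tensor $N_\varphi=[\varphi,\varphi]+d\eta\otimes\xi$ directly, using that on a Lie algebra $d\eta(X,Y)=-\eta([X,Y])$. Since $N_\varphi$ is $\R$-bilinear and skew-symmetric and $\g=\R\xi\oplus\h$, it vanishes identically if and only if it vanishes on the pairs $(X,Y)$ with $X,Y\in\h$ and on the pairs $(\xi,X)$ with $X\in\h$ (the pair $(\xi,\xi)$ being automatic). I would treat these two cases separately, since the first will yield condition \textsc{2} and the second condition \textsc{1}.

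For $X,Y\in\h$, I would expand $[\varphi,\varphi](X,Y)$ and substitute $\varphi^2[X,Y]=-[X,Y]+\eta([X,Y])\xi$. The key observation is that the term $\eta([X,Y])\xi$ produced by $\varphi^2$ is exactly cancelled by $(d\eta\otimes\xi)(X,Y)=-\eta([X,Y])\xi$, leaving $N_\varphi(X,Y)=[\varphi X,\varphi Y]-[X,Y]-\varphi([\varphi X,Y]+[X,\varphi Y])$. Thus $N_\varphi(X,Y)=0$ for all $X,Y\in\h$ is precisely condition \textsc{2}. The final identity \eqref{normal-deta} then follows for free: applying $\eta$ to condition \textsc{2} and using $\eta\circ\varphi=0$ gives $\eta([\varphi X,\varphi Y])=\eta([X,Y])$, i.e. $d\eta(\varphi X,\varphi Y)=d\eta(X,Y)$.

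For the mixed case I would use $\varphi\xi=0$ to collapse $[\varphi,\varphi](\xi,X)$ to $\varphi^2[\xi,X]-\varphi[\xi,\varphi X]$; again the $\xi$-components coming from $\varphi^2$ and from $d\eta\otimes\xi$ cancel, giving $N_\varphi(\xi,X)=-[\xi,X]-\varphi[\xi,\varphi X]=-\mathrm{ad}_\xi X-\varphi\,\mathrm{ad}_\xi\varphi X$. One direction is then immediate: if condition \textsc{1} holds, $\varphi\,\mathrm{ad}_\xi\varphi=\mathrm{ad}_\xi\varphi^2$, which equals $-\mathrm{ad}_\xi$ on $\h$ (since $\eta$ vanishes there), so $N_\varphi(\xi,X)=0$.

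The converse is the only step that is not purely mechanical, and I expect it to be the main (mild) obstacle. From $N_\varphi(\xi,X)=0$ I would first apply $\eta$: since $\eta\circ\varphi=0$, this forces $\eta([\xi,X])=0$ for every $X\in\h$, which is exactly the asserted $\mathrm{ad}_\xi$-invariance of $\h$. Then I would apply $\varphi$ to the relation $\varphi\,\mathrm{ad}_\xi\varphi X=-\mathrm{ad}_\xi X$ and use $\varphi^2=-I+\eta\otimes\xi$ together with the invariance just obtained, now applied to $\varphi X\in\mathrm{Im}\,\varphi=\h$ so that $\eta([\xi,\varphi X])=0$, in order to discard the spurious $\xi$-term; this yields $[\xi,\varphi X]=\varphi[\xi,X]$ on $\h$. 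Since $\mathrm{ad}_\xi\circ\varphi$ and $\varphi\circ\mathrm{ad}_\xi$ agree on $\xi$ as well (both vanish), condition \textsc{1} holds on all of $\g$. Assembling the two cases gives the stated equivalence, with the $\mathrm{ad}_\xi$-invariance of $\h$ and the identity \eqref{normal-deta} dropping out along the way.
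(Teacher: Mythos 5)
Your proposal is correct and follows essentially the same route as the paper: both compute the two restrictions $N_\varphi(X,Y)=[\varphi X,\varphi Y]-[X,Y]-\varphi([\varphi X,Y]+[X,\varphi Y])$ for $X,Y\in\h$ and $N_\varphi(\xi,X)=-[\xi,X]-\varphi[\xi,\varphi X]$, and read off the equivalence, the $\mathrm{ad}_\xi$-invariance of $\h$, and \eqref{normal-deta} from them. The only difference is that you spell out the converse of the mixed case (extracting $\eta([\xi,X])=0$ first and then applying $\varphi$), a step the paper declares immediate; your verification of it is sound.
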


\begin{proof}
From  the definition of $N_\varphi$ it follows that for every $X,Y\in{\mathfrak h}$
		\begin{align*}
		N_\varphi(X,Y)&=[\varphi X,\varphi Y]-[X,Y]-\varphi([\varphi X,Y]+[X,\varphi Y]),\\
		N_\varphi(\xi,X)&={}-[\xi,X]-\varphi[\xi,\varphi X],
		\end{align*}
which immediately give the equivalence. The fact that $\mathfrak h$ is $\mathrm{ad}_\xi$-invariant is consequence of \textsc{1.} Finally, equation \eqref{normal-deta} follows from $d\eta(\xi,X)=0$ and by applying the $1$-form $\eta$ on both sides in \textsc{2.}
\end{proof}

\begin{definition}\label{definition-abelian}
An almost contact structure $(\varphi,\xi,\eta)$ on a Lie algebra ${\mathfrak g}$, will be called \emph{abelian} if
		\begin{itemize}
			\item[\textsc{1.}] $\mathrm{ad}_\xi\circ\varphi=\varphi\circ \mathrm{ad}_\xi$,
			\item[\textsc{2.}] $[\varphi X,\varphi Y]=[X,Y]$ for every $X,Y\in\mathfrak h$.
		\end{itemize}
\end{definition}

Condition \textsc{2.} is equivalent to
	\[ [\varphi X,Y]+[X,\varphi Y]=0\qquad \forall X,Y\in\mathfrak h,\]
and by Proposition \ref{proposition-normality}, it follows that an abelian almost contact structure is normal.
	
Now, given an almost contact Lie algebra $({\mathfrak g},\varphi,\xi,\eta)$, the endomorphism $J:=\varphi|_{\mathfrak h}:\mathfrak h\to{\mathfrak h}$ satisfies $J^2=-I$. Then the complexification ${\mathfrak g}^{\mathbb{C}}={\mathfrak g}\oplus i{\mathfrak g}$ splits as
\begin{equation}\label{g-C}
\g^{\mathbb{C}}={\mathbb{C}\xi \oplus \mathfrak h}^{1,0}\oplus{\mathfrak h}^{0,1}
\end{equation}
where
\[{\mathfrak h}^{1,0}=\{X-iJX\,|\,X\in{\mathfrak h}\},\qquad {\mathfrak h}^{0,1}=\{X+iJX\,|\,X\in{\mathfrak h}\}\]
are the eigenspaces of $J:{\mathfrak h}^{\mathbb{C}}\to {\mathfrak h}^{\mathbb{C}}$ corresponding to the eigenvalues $i$ and $-i$ respectively.
	
\begin{proposition}\label{h10}
Let $({\mathfrak g},\varphi,\xi,\eta)$ be an almost contact Lie algebra. Then,
	\begin{itemize}
		\item[\textsc{1}.] the structure is normal if and only if ${\mathfrak h}^{1,0}$ is an $\mathrm{ad}_\xi$-invariant subalgebra of ${\mathfrak g}^\mathbb{C}$;
		\item[\textsc{2.}] the structure is abelian if and only if ${\mathfrak h}^{1,0}$ is an $\mathrm{ad}_\xi$-invariant abelian subalgebra of ${\mathfrak g}^\mathbb{C}$.
	\end{itemize}
\end{proposition}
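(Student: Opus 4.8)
The plan is to work entirely on the complexification, exploiting the characterization $\mathfrak{h}^{1,0}=\ker(\varphi-iI)\subset\mathfrak{g}^{\mathbb C}$, where $\varphi$ is extended $\mathbb{C}$-linearly: if $\varphi W=iW$ then $\varphi^2W=-W$, so $\eta(W)\xi=0$ forces $W\in\mathfrak{h}^{\mathbb C}$, placing $W$ in the $i$-eigenspace of $J$. I would first dispose of the $\mathrm{ad}_\xi$-invariance, which is common to both statements. If condition \textsc{1.} of Proposition \ref{proposition-normality} holds and $\varphi W=iW$, then $\varphi[\xi,W]=[\xi,\varphi W]=i[\xi,W]$, so $[\xi,\mathfrak{h}^{1,0}]\subseteq\mathfrak{h}^{1,0}$. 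Conversely, since $\mathrm{ad}_\xi$ is a real operator it commutes with conjugation, so invariance of $\mathfrak{h}^{1,0}$ forces invariance of $\mathfrak{h}^{0,1}=\overline{\mathfrak{h}^{1,0}}$, while $\mathbb{C}\xi$ is $\mathrm{ad}_\xi$-invariant trivially; as these are the three $\varphi$-eigenspaces, $\mathrm{ad}_\xi$ preserving each is equivalent to $\mathrm{ad}_\xi\circ\varphi=\varphi\circ\mathrm{ad}_\xi$, i.e. condition \textsc{1.}

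The core is a single computation. For $X,Y\in\mathfrak{h}$ set $Z=X-i\varphi X$ and $W=Y-i\varphi Y$; these are the generic elements of $\mathfrak{h}^{1,0}$, since $X\mapsto X-i\varphi X$ is a real-linear bijection $\mathfrak{h}\to\mathfrak{h}^{1,0}$. Expanding by $\mathbb{C}$-bilinearity gives
\[ [Z,W]=A-iB,\qquad A:=[X,Y]-[\varphi X,\varphi Y],\quad B:=[\varphi X,Y]+[X,\varphi Y], \]
with $A,B\in\mathfrak{g}$. Because $\ker(\varphi-iI)\subset\mathfrak{h}^{\mathbb C}$, membership $[Z,W]\in\mathfrak{h}^{1,0}$ is equivalent to $\varphi[Z,W]=i[Z,W]$, and separating real and imaginary parts this reads exactly $\varphi A=B$ and $\varphi B=-A$.

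For part \textsc{1.} I would note that the second equation $\varphi B=-A$ is verbatim condition \textsc{2.} of Proposition \ref{proposition-normality}. The point requiring care is that closure of $\mathfrak{h}^{1,0}$ under the bracket a priori also demands $\varphi A=B$. I would recover this by applying $\varphi$ to $\varphi B=-A$, which gives $\varphi^2B=-\varphi A$, hence $\varphi A=-\varphi^2 B=B-\eta(B)\xi$; so it suffices to check $\eta(B)=0$. This is where the normality condition \textsc{2.} is used beyond its literal content: it implies \eqref{normal-deta}, and substituting $Y\mapsto\varphi Y$ there, using $d\eta(U,V)=-\eta([U,V])$ and $\varphi^2=-I$ on $\mathfrak{h}$, yields $\eta([\varphi X,Y])+\eta([X,\varphi Y])=0$, that is $\eta(B)=0$. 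Thus normality condition \textsc{2.} is equivalent to $\mathfrak{h}^{1,0}$ being a subalgebra, and combined with the invariance step this gives part \textsc{1.} The verification $\eta(B)=0$ is the only genuinely delicate point.

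For part \textsc{2.}, condition \textsc{2.} of Definition \ref{definition-abelian} is $A=0$; by the remark following that definition it is equivalent to $B=0$ (replace $X$ by $\varphi X$, resp. $Y$ by $\varphi Y$, and use $\varphi^2=-I$ on $\mathfrak{h}$). Hence $A=0$ forces $[Z,W]=A-iB=0$, i.e. $\mathfrak{h}^{1,0}$ is abelian, and conversely an abelian $\mathfrak{h}^{1,0}$ gives in particular $A=0$. Combining with the invariance step yields part \textsc{2.}; consistently, $A=0$ satisfies the subalgebra equations $\varphi A=B$, $\varphi B=-A$ trivially, matching the fact that abelian structures are normal.
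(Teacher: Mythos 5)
Your proof is correct and takes essentially the same route as the paper's: the paper's entire proof consists of displaying the identities $[X-iJX,Y-iJY]=([X,Y]-[JX,JY])-i([X,JY]+[JX,Y])$ and $[\xi,X-iJX]=[\xi,X]-i[\xi,JX]$ and asserting that both equivalences follow. Your write-up supplies the details the paper leaves implicit, in particular the verification that $\eta(B)=0$ (needed to upgrade $\varphi B=-A$ to the full eigenspace condition $\varphi A=B$ in the normality case), which you correctly identify as the only delicate point.
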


\begin{proof}
Both equivalences are consequence of the following identities:
		\begin{align*}
		[X-iJX,Y-iJY]&=[X,Y]-[JX,JY]-i([X,JY]+[JX,Y]),\\
		[\xi,X-iJX]&=[\xi,X]-i[\xi,JX]
		\end{align*}
for every $X,Y\in\mathfrak h$.
\end{proof}

\

\subsection{Abelian almost contact structures with $\xi\in{\mathfrak z}({\mathfrak g})$}
	
We will investigate almost contact Lie algebras $(\g,\varphi,\xi,\eta)$ with abelian structure  for which $\mathrm{ad}_\xi=0$, that is $\xi\in{\mathfrak z}({\mathfrak g})$. A first known example is the following.

\begin{example}\label{Heisenberg}
	Let ${\mathfrak g}={\mathfrak h}_{n}^{\mathbb{R}}$ be the real Heisenberg Lie algebra of dimension $2n+1$. It admits a basis $\{\xi,X_1,\ldots,X_n,Y_1,\ldots,Y_n\}$, with non vanishing commutators
	\[[X_i,Y_i]=2\xi,\qquad i=1,\ldots,n.\]
	Let $\varphi\in \mathrm{End}({\mathfrak g})$ and $\eta\in{\mathfrak g}^*$ such that
	\[\varphi\xi=0,\quad\varphi X_i=Y_i,\quad\varphi Y_i=-X_i,\quad \eta(\xi)=1,\quad \eta(X_i)=\eta(Y_i)=0,\]
	for every $i=1,\ldots,n$. Then $(\varphi,\xi,\eta)$ is an abelian almost contact structure on $\mathfrak g$. A compatible inner product on $\mathfrak g$ can be defined requiring the basis to be orthonormal. Then $(\mathfrak g,\varphi,\xi,\eta,g)$ is a Sasakian Lie algebra with abelian structure.
\end{example}

\

In our next result we exhibit an analogy with Lie algebras equipped with abelian complex structures (see Lemma \ref{lemma-J}).

\begin{lemma}\label{2step}
	Let $(\g,\varphi,\xi,\eta)$ be an almost contact Lie algebra with abelian structure such that $\xi\in{\mathfrak z}(\g)$. Then $\g$ is $2$-step solvable.	
\end{lemma}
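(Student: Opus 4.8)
The plan is to reduce the statement to the corresponding fact for abelian complex structures, namely part \textsc{4.} of Lemma \ref{lemma-J}, by passing to the product Lie algebra $\g\times\R$. The guiding observation is that the (integrable) almost complex structure produced by the normality construction becomes genuinely \emph{abelian} precisely when $\xi$ is central, which is exactly the hypothesis at hand.

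First I would form the Lie algebra $\tilde\g:=\g\times\R$, writing $\R=\R e_0$ for the central abelian factor, and define $\tilde J\in\mathrm{End}(\tilde\g)$ by $\tilde J|_{\h}:=J=\varphi|_{\h}$, $\tilde J\xi:=e_0$ and $\tilde J e_0:=-\xi$ (equivalently $\tilde J(X+fe_0)=\varphi X-f\xi+\eta(X)e_0$, the infinitesimal counterpart of the structure on $M\times\R$ recalled in the Preliminaries). Since $J^2=-I$ on $\h$ and $\tilde J$ interchanges $\xi$ and $e_0$ up to sign, one checks at once that $\tilde J^2=-I$. The key point is then to verify that $\tilde J$ is abelian, i.e. $[\tilde J A,\tilde J B]=[A,B]$ for all $A,B\in\tilde\g$. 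Because $\xi\in\z(\g)$ and $e_0$ is central in $\tilde\g$, the only nonzero brackets in $\tilde\g$ occur between elements of $\h$, where the required identity reads $[JX,JY]=[X,Y]$ — which is exactly condition \textsc{2.} of Definition \ref{definition-abelian}; every bracket involving $\xi$ or $e_0$ vanishes on both sides. Hence $\tilde J$ is an abelian complex structure on $\tilde\g$.

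Finally I would invoke part \textsc{4.} of Lemma \ref{lemma-J} to conclude that $\tilde\g$ is $2$-step solvable. Since $\R e_0$ is a central abelian direct summand, $[\tilde\g,\tilde\g]=[\g,\g]=\g'$, and therefore the second derived algebra of $\tilde\g$ equals $[\g',\g']=\g''$; as the former vanishes, so does the latter, giving $\g''=0$, i.e. $\g$ is $2$-step solvable. In this scheme there is essentially no computational obstacle: the only substantive step is the middle one, recognizing that the centrality of $\xi$ is exactly what upgrades the integrable $\tilde J$ to an \emph{abelian} complex structure on $\g\times\R$, after which Lemma \ref{lemma-J} does all the work. (An alternative, more hands-on route would write $\g$ as the central extension $\R\xi\oplus_{\sigma}\bar\g$ of $\bar\g:=\g/\R\xi$ by the $J$-invariant $2$-cocycle $\sigma=\eta([\cdot,\cdot])$ and then show directly that $\sigma$ vanishes on $\bar\g'\times\bar\g'$ using $d\sigma=0$; but this requires more care, and the product-algebra argument is cleaner.)
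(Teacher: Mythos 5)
Your proof is correct, but it takes a different route from the paper. The paper works in the complexification: by Proposition \ref{h10} the spaces $\h^{1,0}$ and $\h^{0,1}$ are abelian subalgebras of $\g^{\C}$, and since $\xi$ is central, $\C\xi\oplus\h^{1,0}$ is still abelian; thus $\g^{\C}$ is a vector-space direct sum of two abelian subalgebras, and Petravchuk's theorem gives that $\g^{\C}$, hence $\g$, is $2$-step solvable. You instead stay real and pass to the product $\g\times\R$, observing that the cone-type complex structure $\tilde J$ (the infinitesimal version of the normality construction) is \emph{abelian} precisely because $\xi$ and $e_0$ are both central, so that the only brackets to check are those on $\h$, where abelianness of $\tilde J$ reduces to condition \textsc{2.}\ of Definition \ref{definition-abelian}; then Lemma \ref{lemma-J}.\textsc{4} applied to $(\g\times\R,\tilde J)$, together with $[\g\times\R,\g\times\R]=\g'$, yields $\g''=0$. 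The two arguments are cousins --- Lemma \ref{lemma-J}.\textsc{4} is itself proved via the same Petravchuk decomposition $\g^{1,0}\oplus\g^{0,1}$ --- but yours has the merit of making explicit the correspondence between abelian almost contact structures with central $\xi$ and abelian complex structures on $\g\times\R$, which nicely parallels the normality discussion in the Preliminaries, while the paper's version avoids introducing the auxiliary Lie algebra and quotes Petravchuk directly. All the steps you outline (the verification of $\tilde J^2=-I$, the case analysis for $[\tilde JA,\tilde JB]=[A,B]$, and the identification of derived algebras) go through without difficulty.
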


\begin{proof}
	According to Proposition \ref{h10}, the subspaces $\h^{1,0}$ and $\h^{0,1}$ are abelian subalgebras of $\g^\C$. Moreover, since $\xi$ is central, $\C\xi\oplus \h^{1,0}$ is an abelian subalgebra, and it follows from \eqref{g-C} that $\g^\C$ can be written as a direct sum (as vector spaces) of two abelian subalgebras, $\C\xi\oplus \h^{1,0}$ and $\h^{0,1}$. Thus, according to \cite{Pe} (see also \cite[Proposition 2.3]{ABDO}), $\g^\C$ is $2$-step solvable, hence $\g$ is $2$-step solvable.
\end{proof}

\

In the following result we show that an almost contact Lie algebra with abelian structure such that $\xi\in\z(\g)$ can be written as a central extension of a Lie algebra with an abelian complex structure, via a $J$-invariant $2$-cocycle.

\begin{proposition}\label{proposition-extension}
	Let $({\mathfrak g},\varphi,\xi,\eta)$ be an almost contact Lie algebra with abelian structure such that $\xi\in{\mathfrak z}({\mathfrak g})$. Then $\mathfrak h=\operatorname{Ker}\eta$ admits a Lie algebra structure and an abelian complex structure given by $J=\varphi|_{\mathfrak h}$, such that $\mathfrak g$ is the $1$-dimensional central extension of $\h$  by a $J$-invariant $2$-cocycle $\sigma$. Moreover, the cocycle $\sigma$ is given by $\sigma(X,Y)=-d\eta(X,Y)$, $X,Y\in\h$.
		
	Conversely, if $\mathfrak h$ is a Lie algebra equipped with an abelian complex structure $J$ and $\sigma$ is a $J$-invariant $2$-cocycle
	on $\h$, then the central extension  $\g=\R\xi\oplus_\sigma{\h}$  carries a natural abelian almost contact structure $(\varphi,\xi,\eta)$.
		
	Moreover, $\eta$ is a contact form if and only if $\sigma$ is of maximal rank.
\end{proposition}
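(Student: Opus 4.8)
The plan is to prove the two implications separately and then settle the contact condition by a direct computation with $(d\eta)^n$.

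For the direct statement, the key point is that since $\xi\in\z(\g)$, the line $\R\xi$ is a central ideal of $\g$, so the quotient $\g/\R\xi$ inherits a Lie algebra structure; identifying $\g/\R\xi$ with $\h=\operatorname{Ker}\eta$ through the splitting $\g=\R\xi\oplus\h$ transports this to a bracket $[\cdot,\cdot]_\h$ on $\h$, namely the $\h$-component of the bracket of $\g$. Concretely, for $X,Y\in\h$ I would write $[X,Y]=[X,Y]_\h+\sigma(X,Y)\xi$ with $\sigma(X,Y):=\eta([X,Y])=-d\eta(X,Y)$, the last equality being the usual Lie-algebra formula $d\eta(X,Y)=-\eta([X,Y])$. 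Because $\xi$ is central, evaluating the Jacobi identity of $\g$ on a triple from $\h$ and separating horizontal and vertical components yields precisely the Jacobi identity for $[\cdot,\cdot]_\h$ (horizontal part) together with the cocycle condition $d\sigma=0$ (vertical part, which is exactly $-d\sigma(X,Y,Z)=0$). Thus $\sigma$ is a genuine $2$-cocycle and $\g=\R\xi\oplus_\sigma\h$ by construction. Note that $\h$ is not a subalgebra of $\g$ in general, which is why the structure is realized through the quotient.

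Next I would check that $J:=\varphi|_\h$ is an abelian complex structure for $[\cdot,\cdot]_\h$. Restricting $\varphi^2=-I+\eta\otimes\xi$ to $\h$ gives $J^2=-I$. For the abelian property, condition \textsc{2.} of Definition \ref{definition-abelian} reads $[\varphi X,\varphi Y]=[X,Y]$ in $\g$, i.e. $[JX,JY]=[X,Y]$ for $X,Y\in\h$; comparing horizontal components gives $[JX,JY]_\h=[X,Y]_\h$, which says exactly that $J$ is abelian on $(\h,[\cdot,\cdot]_\h)$, while comparing vertical components gives $\sigma(JX,JY)=\sigma(X,Y)$, i.e. the $J$-invariance of $\sigma$. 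So the single hypothesis \textsc{2.} simultaneously produces the abelian complex structure on $\h$ and the $J$-invariance of the cocycle; getting this dovetailing right is the one conceptual point of the argument.

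For the converse I would run the construction backwards. On $\g=\R\xi\oplus_\sigma\h$ define $\eta$ by $\eta(\xi)=1$, $\eta|_\h=0$, and $\varphi$ by $\varphi\xi=0$, $\varphi|_\h=J$. The axioms $\varphi^2=-I+\eta\otimes\xi$ and $\eta(\xi)=1$ follow from $J^2=-I$; condition \textsc{1.} is automatic since $\xi$ is central; and condition \textsc{2.} is recovered by computing $[\varphi X,\varphi Y]=[JX,JY]_\h+\sigma(JX,JY)\xi=[X,Y]_\h+\sigma(X,Y)\xi=[X,Y]$, using that $J$ is abelian on $\h$ together with the $J$-invariance of $\sigma$. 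Finally, for the contact condition I would compute $d\eta$: centrality of $\xi$ gives $d\eta(\xi,\cdot)=0$, and on $\h$ one has $d\eta(X,Y)=-\eta([X,Y])=-\sigma(X,Y)$, so $d\eta=-\sigma$ as a $2$-form on the $2n$-dimensional space $\h$ (with $\dim\g=2n+1$). Hence $\eta\wedge(d\eta)^n=(-1)^n\,\eta\wedge\sigma^n$, which is nonzero exactly when $\sigma^n\neq0$ on $\h$, i.e. when $\sigma$ has maximal rank $2n$. I expect no serious obstacle in the whole proposition; the only care needed is the bookkeeping of the horizontal/vertical splitting and the sign convention for $d\eta$ on a Lie algebra.
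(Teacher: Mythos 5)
Your proof is correct and follows essentially the same route as the paper's: the same decomposition $[X,Y]=\sigma(X,Y)\xi+[X,Y]_\h$, the same splitting of the Jacobi identity into horizontal and vertical components, and the same comparison of components in $[\varphi X,\varphi Y]=[X,Y]$ to obtain simultaneously the abelian property of $J$ and the $J$-invariance of $\sigma$. The only difference is that you spell out the contact-form equivalence via $\eta\wedge(d\eta)^n=(-1)^n\,\eta\wedge\sigma^n$, which the paper leaves as "clear."
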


\begin{proof}
We consider the decomposition $\g=\R\xi\oplus \h$ as a direct sum of vector spaces. Accordingly we can write, for any $X,Y\in\h$,
	\begin{equation} \label{decomposition}
		[X,Y]=\sigma(X,Y)\xi+[X,Y]_\h,
	\end{equation}
where $\sigma(X,Y)\in\R$ and  $[X,Y]_\h\in\h$. If $X,Y,Z\in \h$ then taking components in $[[X,Y],Z]+[[Y,Z],X]+[[Z,X],Y]=0$ and using that $\xi$ is central, we arrive at
		\begin{align}\label{jacobi}
		[[X,Y]_\h,Z]_\h+[[Y,Z]_\h,X]_\h+[[Z,X]_\h,Y]_\h & =0,\\
		\sigma([X,Y]_\h,Z)+\sigma([Y,Z]_\h,X)+\sigma([Z,X]_\h,Y) & =0.\nonumber
		\end{align}
The first equation in \eqref{jacobi} is the Jacobi identity for the bracket $[\cdot,\cdot]_\h$ on  $\h$. Then the second equation in \eqref{jacobi} means that $\sigma$ is a $2$-cocycle on $(\h,[\cdot,\cdot]_\h)$. From this it follows that $\g$ is the central extension of $(\h,[\cdot,\cdot]_\h)$ by the $2$-cocycle $\sigma$.
		
It is clear that $J$ satisfies $J^2=-I$ on $\h$. Moreover, using \eqref{decomposition}, it follows  from $[\varphi X,\varphi Y]=[X,Y]$ that
		\[ [JX,JY]_\h=[X,Y]_\h, \qquad \sigma(JX,JY)=\sigma(X,Y),\]
so that $J$ is an abelian complex structure on $\h$ and $\sigma$ is a $J$-invariant cocycle on $\h$. By applying $\eta$ in both sides of \eqref{decomposition} we arrive at $\sigma(X,Y)=-d\eta(X,Y)$, $X,Y\in\h$.
		
As for the converse, it is enough to set $\varphi|_\h=J$, $\varphi(\xi)=0$ and $\eta|_\h=0$, $\eta(\xi)=1$.
		
The last statement is clear.
\end{proof}
	
\medskip

As a consequence of Lemma \ref{2step} and Proposition \ref{proposition-extension} we obtain

\begin{corollary}
	Let $J$ be an abelian complex structure on a Lie algebra $\h$. If $\sigma\in\alt^2\h^*$ is a $J$-invariant $2$-cocycle, then $\sigma(x,y)=0$ for $x,y\in\h'$.
\end{corollary}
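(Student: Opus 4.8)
The plan is to realize $\h$ as the base of a $1$-dimensional central extension and to transfer the $2$-step solvability of the extension into a vanishing statement for $\sigma$. First I would form $\g = \R\xi \oplus_\sigma \h$, the central extension of $\h$ by the $J$-invariant $2$-cocycle $\sigma$. By the converse part of Proposition \ref{proposition-extension}, setting $\varphi|_\h = J$, $\varphi\xi = 0$, $\eta|_\h = 0$ and $\eta(\xi)=1$ equips $\g$ with an abelian almost contact structure for which $\xi$ is central. Lemma \ref{2step} then guarantees that $\g$ is $2$-step solvable, i.e. $[\g',\g']'=0$, where $[\cdot,\cdot]'$ is the bracket of $\g$.

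By bilinearity of $\sigma$ it suffices to prove $\sigma(x,y)=0$ on generators $x=[X,Y]$ and $y=[X',Y']$ of $\h'$, where $X,Y,X',Y'\in\h$ and $[\cdot,\cdot]$ is the bracket of $\h$. The key observation is that each such generator lifts canonically into $\g'$: the brackets
\[ [X,Y]' = \sigma(X,Y)\xi + x, \qquad [X',Y']' = \sigma(X',Y')\xi + y \]
both belong to $\g'=[\g,\g]'$ and project to $x$ and $y$ along the splitting $\g=\R\xi\oplus\h$. Since $\xi$ is central, computing their bracket annihilates every $\xi$-term, leaving
\[ \big[[X,Y]', [X',Y']'\big]' = [x,y]' = \sigma(x,y)\xi + [x,y], \]
where the final equality is the defining bracket of the central extension.

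The conclusion is then immediate from $2$-step solvability: the left-hand side lies in $[\g',\g']'=0$, so $\sigma(x,y)\xi + [x,y]=0$, and reading off the $\R\xi$-component in the direct sum $\g=\R\xi\oplus\h$ yields $\sigma(x,y)=0$ (the $\h$-component simultaneously gives $[x,y]=0$, re-deriving that $\h'$ is abelian). Extending over all generators of $\h'$ by bilinearity completes the argument. There is no serious obstacle here; the one point that requires care is the separation of the vanishing bracket in $\g$ into its scalar and horizontal components, which is precisely what forces $\sigma$ to vanish on $\h'$.
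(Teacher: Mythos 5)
Your proof is correct and is exactly the argument the paper intends: the corollary is stated there as an immediate consequence of Lemma \ref{2step} and Proposition \ref{proposition-extension}, i.e.\ one forms the central extension $\R\xi\oplus_\sigma\h$, invokes $2$-step solvability, and reads off the $\R\xi$-component of the vanishing bracket $[x,y]'=\sigma(x,y)\xi+[x,y]$ for $x,y\in\h'$. You have merely made explicit the details the paper leaves to the reader.
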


\medskip


\begin{example}
Let $\mathfrak h$ be a Lie algebra equipped with an abelian complex structure $J$. Then any exact $2$-form $\sigma=df$, with
	$f\in\mathfrak{h}^*$, is $J$-invariant, since
	\[df(JX,JY)=-f([JX,JY])=-f([X,Y])=df(X,Y),\]
	for any $X,Y\in\mathfrak h$. However, in this case the almost contact Lie algebra $\g=\R \xi\oplus_\sigma \h$ from Proposition \ref{proposition-extension} is isomorphic to the direct product $\R\times \h$. Indeed, the linear map $$T:\mathfrak g\to \mathbb R \times \mathfrak h,\qquad T(a\xi+X)=(a+f(X))\xi+X,\quad a\in\mathbb R, X\in\mathfrak h$$
	is a Lie algebra isomorphism. In particular, on the subalgebra $\mathfrak{k}=T^{-1}({\mathfrak h})=\{-f(X)\xi+X\,|\,
	X\in{\mathfrak h}\}$,
	one can define the abelian complex structure $J'$ by $J'(-f(X)\xi+X)=-f(JX)\xi+JX$, which satisfies $T(J'Z)=J(TZ)$ for every $Z\in{\mathfrak k}$.
\end{example}

\

Now, let us consider an almost contact metric Lie algebra $(\g,\varphi,\xi,\eta,g)$ with abelian structure such that $\xi$ is central. By Proposition \ref{proposition-extension}, $\g$ is the $1$-dimensional central extension of a Hermitian Lie algebra $(\h,J,g)$, with abelian complex structure, by a $J$-invariant $2$-cocycle $\sigma$. Then, using \eqref{decomposition} and the fact that $\xi$ is central, it is easy to verify that the structure $(\varphi,\xi,\eta,g)$  is quasi-Sasakian, i.e. $d\Phi=0$, if and only if $(\h,J,g)$ is a K\"ahler Lie algebra with abelian complex structure $J$.
	
Consequently, by Theorem \ref{kahler-abelian}, any quasi-Sasakian Lie algebra $(\g,\varphi,\xi,\eta,g)$ with abelian structure and such that $\xi\in{\z}({\g})$, is isomorphic to
	\[\R\xi\oplus_{\sigma}\left(\aff(\R)\times \cdots \times \aff(\R)\times \R^{2s}\right).\]
Further, the structure is $\alpha$-Sasakian if and only if $\sigma=-2\alpha\omega$, where $\omega$ is the K\"ahler form of $(\h,J,g)$.

\medskip
	
\begin{example}\label{example-quasi}
Let us consider the abelian Lie algebra $\h=\R^{2n}$ spanned by $X_i,Y_i$, $i=1,\ldots,n$, with K\"ahler structure $(J,g)$ such that $JX_i=Y_i$, and the basis is orthonormal. Denote by $X^i$, $Y^i$, $i=1,\ldots,n$ the dual basis of ${\h}^*$. For any $k\leq n$ the $J$-invariant $2$-cocycle $\sigma_k=2\sum_{i=1}^k X^i\wedge Y^i$ gives rise to a Lie algebra $\g_k=\R\xi\oplus_{\sigma_k} \h$ endowed with a quasi-Sasakian abelian structure. One can easily verify that $\g_k$ is isomorphic to $\h_k^{\R}\times {\R}^{2(n-k)}$. When $k=n$, that is $\sigma_n=-2\omega$, where $\omega$ is the K\"ahler form of $(\h,J,g)$, we obtain the Sasakian structure on $\h_n^{\R}$ as in Example \ref{Heisenberg}. Notice that $\sigma_k$ is not exact for any $k$.
\end{example}

\

In the following we provide one further example with a non-exact $2$-cocycle, and where furthermore $\mathfrak h$ is a non-abelian algebra.
	
\begin{example}\label{affC}
Let $\mathfrak h:=\mathfrak{aff}(\mathbb C)$ be the $4$-dimensional Lie algebra introduced in Proposition \ref{hypercomplex-4d}. This Lie algebra admits many abelian complex structures, since it admits an abelian hypercomplex one. Let us choose the abelian complex structure  $J=J_1$ in \eqref{aff-C}, that is, $Je_1=-e_2, \, Je_3=e_4$. The differential $d:\mathfrak h^*\to \alt^2\mathfrak h^*$ is given by
	\begin{equation}\label{d-affC}
	de^1=0,\; de^2=0, \; de^3=-e^{13}+e^{24}, \; de^4=-e^{14}-e^{23}.
	\end{equation}
Hence the $2$-form $\sigma$ given by $\sigma=e^{12}$ satisfies $d\sigma=0$ and $\sigma(J\cdot,J\cdot)=\sigma(\cdot,\cdot)$, so that $\sigma$ is a $J$-invariant $2$-cocycle on $\h$. Clearly $\sigma$ is not exact. The central extension $\g=\R \xi\oplus_\sigma \h$ has Lie brackets given by:
	\[ [e_1,e_2]=\xi, \;[e_1,e_3]=e_3, \; [e_1,e_4]=e_4, \; [e_2,e_3]=e_4,\; [e_2,e_4]=-e_3.  \]
Notice that $\g$ is not isomorphic to $\R\times \aff(\C)$.
		
Taking the abelian almost contact structure $(\varphi,\xi,\eta)$ on $\g$ defined in Proposition \ref{proposition-extension} and any compatible inner product $g$, the structure $(\varphi,\xi,\eta,g)$ cannot be quasi-Sasakian since $\aff(\C)$ does not admit any abelian K\"ahler structure.
\end{example}

\

\subsection{Abelian almost contact structures with $d\eta=0$}
	
We study now almost contact Lie algebras $(\g,\varphi,\xi,\eta)$ with abelian structure, for which the subspace $\h=\operatorname{Ker} \eta$ is a subalgebra, or equivalently $d\eta=0$. Therefore, $J:=\varphi|_{\h}$ is an abelian complex structure on $\h$.

\begin{proposition}\label{prop-extension-D}
Let $(\g,\varphi,\xi,\eta)$ be an almost contact Lie algebra with abelian structure, such that $\h=\operatorname{Ker} \eta$ is a subalgebra. Then $\mathfrak g$ is the semidirect product $\g=\R\xi\ltimes_D \h$, where the derivation $D:=\ad_{\xi}:\h\to\h$ commutes with the abelian complex structure $J=\varphi\,|_{\h}$.

Conversely, if $\h$ is any Lie algebra endowed with an abelian complex structure $J$ and a derivation $D$ commuting with $J$, then $\g:=\R \ltimes_D \h$ admits an abelian almost contact structure.
\end{proposition}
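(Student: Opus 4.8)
The plan is to treat the two directions separately, with the forward direction being the substantive one. The vector space splitting $\g=\R\xi\oplus\h$ is already available from the almost contact structure, so the statement reduces to identifying the bracket of $\g$ with that of a semidirect product $\R\xi\ltimes_D\h$ and checking that $D$ is compatible with $J$.

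For the forward direction, first I would show that $D:=\ad_\xi$ preserves $\h$, i.e. that $\h$ is an ideal. Since $\h=\operatorname{Ker}\eta$ is assumed to be a subalgebra, it suffices to check $\eta([\xi,X])=0$ for every $X\in\h$. This is where the abelian hypothesis enters: applying condition \textsc{1.} of Definition \ref{definition-abelian} to $X\in\h$ gives $[\xi,\varphi X]=\varphi[\xi,X]$, and applying $\eta$ together with $\eta\circ\varphi=0$ yields $\eta([\xi,\varphi X])=0$; since $J=\varphi|_\h$ is a bijection of $\h$, letting $X$ range over $\h$ shows $\eta([\xi,Z])=0$ for all $Z\in\h$. (Alternatively one may simply invoke the $\ad_\xi$-invariance of $\h$ guaranteed by normality in Proposition \ref{proposition-normality}, since an abelian structure is normal.) With $\h$ an ideal and $\R\xi$ a trivially abelian subalgebra, $\g$ is the semidirect product $\R\xi\ltimes_D\h$ with $D=\ad_\xi|_\h$.

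Next I would record the two properties of $D$. It is a derivation of $\h$: this is immediate from the Jacobi identity, $D[X,Y]=[[\xi,X],Y]+[X,[\xi,Y]]=[DX,Y]+[X,DY]$, all brackets remaining in $\h$ because $\h$ is an ideal. That $D$ commutes with $J$ is precisely condition \textsc{1.} of Definition \ref{definition-abelian} restricted to $\h$: for $X\in\h$ we have $DJX=[\xi,\varphi X]=\varphi[\xi,X]=JDX$. Finally, $J=\varphi|_\h$ is an abelian complex structure on $\h$ by condition \textsc{2.}, as already noted before the statement. This completes the forward direction.

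For the converse, given $(\h,J,D)$ I would define $\varphi$ and $\eta$ on $\g=\R\xi\ltimes_D\h$ by $\varphi|_\h=J$, $\varphi(\xi)=0$, $\eta|_\h=0$, $\eta(\xi)=1$, exactly as in the converse of Proposition \ref{proposition-extension}. The identities $\varphi^2=-I+\eta\otimes\xi$ and $\eta(\xi)=1$ are checked routinely on the two summands. The two abelian conditions then follow from the hypotheses: condition \textsc{2.} reads $[JX,JY]=[X,Y]$ for $X,Y\in\h$, which holds because $J$ is abelian; condition \textsc{1.} reads $\ad_\xi\varphi=\varphi\ad_\xi$, which on $\h$ is the commuting relation $DJ=JD$ and holds trivially on $\xi$. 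I do not expect any genuine difficulty here: the only step that uses the hypotheses in an essential way is the verification that $\h$ is an ideal, and everything else is a direct unwinding of the definitions.
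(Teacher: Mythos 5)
Your proof is correct and follows essentially the same route as the paper, which simply declares the forward direction an immediate consequence of the $\ad_\xi$-invariance of $\h$ and defines the converse structure exactly as you do. Your explicit verification that $\eta([\xi,Z])=0$ for all $Z\in\h$ (via $\eta\circ\varphi=0$ and the bijectivity of $\varphi|_\h$, or equivalently via Proposition \ref{proposition-normality}) just fills in the detail the paper leaves implicit.
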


\begin{proof}
The first part is an immediate consequence of the fact that the subalgebra $\mathfrak h$ is $\mathrm{ad}_{\xi}$-invariant. As for the second part, let $\mathfrak h$ be a Lie algebra with abelian complex structure $J$, and let $D:{\mathfrak h}\to{\mathfrak h}$ be a derivation such that $DJ=JD$. We consider the Lie algebra $\mathfrak g={\mathbb R}\xi\ltimes {\mathfrak h}$, where $\mathfrak h$ is a subalgebra and $[\xi,X]:=DX$ for every $X\in\mathfrak h$. Then $\mathfrak g$ can be endowed with an abelian almost contact structure $(\varphi,\xi,\eta)$ defined by $\varphi\xi=0$, $\varphi X=JX$, 	$\eta(\xi)=1$, $\eta(X)=0$ for every $X\in\mathfrak h$.
\end{proof}
	
\

\begin{corollary}
	Let $(\g,\varphi,\xi,\eta)$ be an almost contact Lie algebra with abelian structure, such that $\h=\operatorname{Ker} \eta$ is a subalgebra. Then $\g$ is solvable.
\end{corollary}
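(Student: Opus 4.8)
The plan is to build directly on Proposition \ref{prop-extension-D}, which already identifies $\g$ as the semidirect product $\g = \R\xi \ltimes_D \h$, where $D = \ad_\xi : \h \to \h$ is a derivation commuting with the abelian complex structure $J = \varphi|_\h$. Since solvability is my goal, the first observation is that $\h$ itself is solvable: by Lemma \ref{lemma-J} item \textsc{4.}, any Lie algebra carrying an abelian complex structure is $2$-step solvable, and in particular $\h' = [\h,\h]$ is abelian. The key structural fact is then that solvability of a Lie algebra is stable under taking a one-dimensional extension: if $\h$ is solvable, then $\g = \R\xi \ltimes_D \h$ is solvable as well.

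To make that last step precise I would argue as follows. The subalgebra $\h$ is an ideal of $\g$ (it is the kernel of the homomorphism $\eta$, which is closed since $d\eta = 0$; equivalently $\h$ is $\ad_\xi$-invariant by construction). The quotient $\g/\h \cong \R\xi$ is one-dimensional, hence abelian, hence solvable. I then invoke the standard fact that an extension of a solvable Lie algebra by a solvable ideal is solvable: concretely, the derived series of $\g$ satisfies $\g' \subseteq \h$, because $\g/\h$ is abelian, and therefore $\g^{(k+1)} = (\g')^{(k)} \subseteq \h^{(k)}$, which vanishes for $k$ large since $\h$ is solvable. This gives $\g^{(m)} = 0$ for some $m$, so $\g$ is solvable.

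I do not expect any genuine obstacle here; the statement is an immediate corollary once Proposition \ref{prop-extension-D} and Lemma \ref{lemma-J} are in hand. The only point requiring a word of care is the claim $\g' \subseteq \h$: this follows because $[\a\xi + X, \b\xi + Y] = \a D Y - \b D X + [X,Y] \in \h$ for all $X,Y \in \h$ and $\a,\b \in \R$, using that $\h$ is an ideal and $[\xi,\xi]=0$. With that inclusion established, the derived-series estimate closes the argument. In fact one can state the sharper quantitative conclusion: since $\h$ is $2$-step solvable by Lemma \ref{lemma-J}, we get $\g^{(3)} = 0$, so $\g$ is at most $3$-step solvable.
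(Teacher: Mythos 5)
Your argument is correct and follows exactly the paper's own route: invoke Proposition \ref{prop-extension-D} to write $\g=\R\xi\ltimes_D\h$, note that $\h$ is solvable by Lemma \ref{lemma-J} and $\g/\h\cong\R$ is solvable, and conclude via the standard extension argument (which the paper leaves implicit and you spell out via the derived series). The added observation that $\g^{(3)}=0$ is a correct, if minor, refinement.
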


\begin{proof}
	According to Proposition \ref{prop-extension-D} we have that $\g=\R\xi \ltimes_D \h$ for a certain derivation $D$ of $\h$. Since both $\g/\h\cong \R$ and $\h$ are solvable (see Lemma \ref{lemma-J}), it follows that $\g$ is solvable as well.
\end{proof}

\smallskip

\begin{example}
	The nilpotent Lie algebra $\h:=\h_1^\R\times \R$ admits an abelian complex structure $J$, and it is easy to find an invertible derivation $D$ of $\h$ which commutes with $J$. Therefore the Lie algebra $\g:=\R\ltimes_{D} \h$ has commutator ideal $\g'=\h$, which is not abelian. Thus $\g$ is not $2$-step solvable.
\end{example}

\

Recall that a remarkable class of almost contact metric structures with closed $1$-form $\eta$ is given by $\alpha$-coK\"ahler structures. In the following, considering a compatible inner product, we characterize $\alpha$-coK\"ahler Lie algebras with abelian structure.

\begin{proposition}\label{prop-alpha-coKahler}
Let $({\mathfrak g},\varphi,\xi,\eta, g)$ be an almost contact metric Lie algebra with abelian structure. Then $(\varphi,\xi,\eta, g)$ is an $\alpha$-coK\"ahler structure if and only if
	\begin{itemize}
		\item [\textsc{1.}] ${\mathfrak h}=\operatorname{Ker} \eta$ is a subalgebra of $\mathfrak g$ endowed with an abelian K\"ahler structure;
		\item[\textsc{2.}] the symmetric part of the derivation $\mathrm{ad}_\xi:{\mathfrak h}\to \mathfrak h$ coincides with $-\alpha I$.
	\end{itemize}
\end{proposition}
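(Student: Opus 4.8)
The plan is to translate both the $\alpha$-coK\"ahler conditions and the two conditions \textsc{1.}, \textsc{2.} into statements about the Chevalley--Eilenberg differentials $d\eta$ and $d\Phi$, and then match them term by term. Since the structure is abelian it is automatically normal (by the remark following Definition \ref{definition-abelian}), so being $\alpha$-coK\"ahler amounts precisely to the two equations $d\eta=0$ and $d\Phi=2\alpha\,\eta\wedge\Phi$. By Proposition \ref{proposition-normality} the subspace $\h=\langle\xi\rangle^\perp$ is $\ad_\xi$-invariant, so I set $D:=\ad_\xi|_{\h}$ and $J:=\varphi|_{\h}$. Throughout I would exploit the identity $\iota_\xi\Phi=0$, which holds because $\varphi$ maps into $\h=\langle\xi\rangle^\perp$; consequently $\Phi(X,Y)=\omega(X_\h,Y_\h)$, where $\omega=g(\cdot,J\cdot)$ is the fundamental form of $(\h,J,g)$ and $X_\h$ denotes the $\h$-component of $X$. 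A $3$-form on $\g$ is determined by its values on horizontal triples and on triples of the form $(\xi,Y,Z)$ with $Y,Z\in\h$, so the whole proof reduces to comparing $d\Phi$ and $\eta\wedge\Phi$ on these two families.

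First I would treat $d\eta$ and the horizontal part of $d\Phi$. A direct computation shows $d\eta=0$ is equivalent to $\eta([X,Y])=0$ for all $X,Y\in\h$, i.e. to $\h$ being a subalgebra; combined with the fact that an abelian almost contact structure makes $J=\varphi|_{\h}$ an abelian complex structure (Proposition \ref{h10}), this accounts for the subalgebra-plus-abelian-complex content of condition \textsc{1.} Next, on a horizontal triple $X,Y,Z\in\h$, using that $\h$ is a subalgebra one obtains $d\Phi(X,Y,Z)=d\omega(X,Y,Z)$, while $(\eta\wedge\Phi)(X,Y,Z)=0$ because every $\eta$-value vanishes; hence the horizontal part of $d\Phi=2\alpha\,\eta\wedge\Phi$ is exactly $d\omega=0$, the K\"ahler condition that completes \textsc{1.}

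The crux is the mixed triple $(\xi,Y,Z)$ with $Y,Z\in\h$. Expanding the Chevalley--Eilenberg formula and using $[\xi,Y]=DY$, $\varphi\xi=0$ gives $d\Phi(\xi,Y,Z)=-\omega(DY,Z)-\omega(Y,DZ)$, whereas $(\eta\wedge\Phi)(\xi,Y,Z)=\omega(Y,Z)$. The key algebraic step is to rewrite the left-hand side through the $g$-symmetric part $S=\frac{1}{2}(D+D^t)$ of $D$: condition \textsc{1.} of Definition \ref{definition-abelian} gives $DJ=JD$, and since $J$ is $g$-skew one checks that $D^t$ commutes with $J$ as well, whence $\omega(Y,DZ)=g(Y,JDZ)=g(D^tY,JZ)=\omega(D^tY,Z)$ and therefore $d\Phi(\xi,Y,Z)=-\omega((D+D^t)Y,Z)=-2\,\omega(SY,Z)$. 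Matching this with $2\alpha\,\omega(Y,Z)$ for all $Y,Z$, and using the nondegeneracy of $g$ together with the invertibility of $J$ on $\h$, yields $SY=-\alpha Y$, i.e. $S=-\alpha I$, which is condition \textsc{2.} Reading these equivalences in both directions proves the proposition. I expect the short commutation argument $DJ=JD\Rightarrow D^tJ=JD^t$, and the resulting reduction of $d\Phi(\xi,\cdot,\cdot)$ to the $g$-symmetric part of $\ad_\xi$, to be the only genuine technical point; the remainder is routine bookkeeping with the Chevalley--Eilenberg differential.
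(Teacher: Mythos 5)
Your proposal is correct and follows essentially the same route as the paper: both arguments read off condition \textsc{1.} from $d\eta=0$ and the vanishing of $d\Phi$ on horizontal triples, and both extract condition \textsc{2.} by computing $d\Phi(\xi,X,Y)$ and using $\ad_\xi J=J\ad_\xi$ (equivalently your $D^tJ=JD^t$) to identify the result with $-2\,\omega(SY,Z)$, where $S$ is the symmetric part of $\ad_\xi|_{\h}$. The paper's proof is just a slightly more condensed version of the same term-by-term matching.
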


\begin{proof}
Assume that $(\varphi,\xi,\eta, g)$ is $\alpha$-coK\"ahler , that is  $d\eta=0$ and $d\Phi=2\alpha\eta\wedge\Phi$, $\alpha\in\R$. Then ${\mathfrak h}=\operatorname{Ker} \eta$ is a subalgebra of $\mathfrak g$, endowed with the abelian Hermitian structure $(J, g)$. This structure is K\"ahler since $d\Phi(X,Y,Z)=0$ for every $X,Y,Z\in\mathfrak h$. Now, taking $X,Y\in\mathfrak h$, we have
	\begin{align*}
	d\Phi(\xi,X,Y)&=-\Phi([\xi,X],Y)-\Phi([X,Y],\xi)-\Phi([Y,\xi],X)\\
	              &=-g(\mathrm{ad}_\xi X,JY)+g(\mathrm{ad}_\xi Y,JX)\\
		          &=-g(\mathrm{ad}_\xi X,JY)-g(\mathrm{ad}_\xi JY,X)\\
		          &=-g(\mathrm{ad}_\xi X,JY)-g(\mathrm{ad}_\xi^*X, JY)
	\end{align*}
where we used the fact that $\mathrm{ad}_\xi:{\mathfrak h}\to \mathfrak h$ commutes with $J$. Being $d\Phi(\xi,X,Y)=2\alpha\Phi(X,Y)=2\alpha g(X,JY)$, we have that the symmetric part of $\mathrm{ad}_\xi$ coincides with $-\alpha I$. The converse follows analogously.
\end{proof}

\medskip
	
In particular, as a consequence of the proposition above and Theorem \ref{kahler-abelian}, any $\alpha$-coK\"ahler Lie algebra $(\g,\varphi,\xi,\eta,g)$ with abelian structure is isomorphic to
\[\R\xi\ltimes_{D}\left(\aff(\R)\times \cdots \times \aff(\R)\times \R^{2s}\right),\]
where $D$ is a derivation of $\aff(\R)^k\times \R^{2s}$ that commutes with $J$ and whose symmetric part is $-\alpha I$. It is easy to verify that such a derivation $D$ is given by
\[ D|_{\aff(\R)^k}=-\alpha I,\qquad D|_{\R^{2s}}=-\alpha I+A, \qquad \text{with } A\in\u(s). \]

In particular, if the structure is coK\"ahler, i.e. $\alpha=0$,  then $\g$ is isomorphic to
\[\aff(\R)^k\times (\R\xi\ltimes_A \R^{2s})\]
for some $A\in\u(s)$. Considering the associated simply connected Lie group with left invariant metric, the factor corresponding to $\R\xi\ltimes_A \R^{2s}$ is flat (according to \cite{Mi}), while each factor corresponding to $\aff(\R)$ has negative constant curvature.

\

\begin{example}
Consider the Lie algebra $\g=\R\xi\ltimes_D \R^{2n}$, where $\R^{2n}$ is equipped with its standard K\"ahler structure, and $D=-\alpha I$ for some $\alpha\neq 0$. Then $\g$ admits an abelian almost contact metric structure $(\varphi, \xi,\eta,g)$ which is $\alpha$-Kenmotsu. Notice that if $G$ is any Lie group with Lie algebra $\mathfrak g$, endowed with the corresponding left invariant $\alpha$-Kenmotsu structure, then $G$ is locally isometric to the hyperbolic space of constant curvature $K=-\alpha^2$ (see \cite{D,Mi}).
\end{example}
	
\

\begin{remark}
For the one-to-one correspondence between $\alpha$-coK\"ahler Lie algebras of dimension $2n+1$ and K\"ahler Lie algebras of dimension $2n$ see also \cite{FV, CP}.
\end{remark}
	
\	
	
\subsection{Classification in dimension $3$}
	
Let $({\mathfrak g},\varphi,\xi,\eta)$ be a 3-dimensional almost contact Lie algebra with abelian structure. Note that condition $\textsc{2.}$ from Definition \ref{definition-abelian} holds trivially on such a Lie algebra, so that the fact that this structure is abelian is equivalent to $[\mathrm{ad}_\xi,\varphi]=0$.
	
Let us assume that $\g$ is not abelian. There exists a basis $\{e_1,e_2\}$ of $\h:=\operatorname{Ker} \eta$ such that $e_2=\varphi(e_1),\, e_1=-\varphi(e_2)$. In this basis, the operator $\mathrm{ad}_\xi|_\h$ takes the following form:
	\[ \mathrm{ad}_\xi|_{\h}=\begin{pmatrix} a & -b \\ b & a \end{pmatrix},\]
for some $a,b\in\mathbb R$. Therefore, the Lie bracket on $\mathfrak g$ is described by
\begin{align*}
	[\xi,e_1] &= ae_1+be_2,\\
	[\xi,e_2] &= -be_1+ae_2,\\
	[e_1,e_2] &= \alpha \xi+\beta e_1+\gamma e_2,
\end{align*}
for some $\alpha,\beta,\gamma\in\mathbb R$. The Jacobi identity in this case is equivalent to the following system of equations:
	\begin{equation}\label{system}
	\begin{cases}
	a\alpha=0,\\
	a\beta+b\gamma=0,\\
	a\gamma-b\beta=0.
	\end{cases}
	\end{equation}
		
\textsl{Case 1}: $\xi$ is central. In this case we have $a=b=0$ and the system \eqref{system} is trivially satisfied. The commutator ideal has dimension 1 and hence $\g$ is isomorphic either to $\h_1^\R$ or $\aff(\R)\times \R$ (see for instance \cite{BD}). The former case happens only when $\beta^2+\gamma ^2= 0$, while the latter case occurs only when $\beta^2+\gamma ^2\neq 0$. Note that $\h$ is a Lie subalgebra if and only if $\alpha=0$.
	
\medskip
	
\textsl{Case 2}: $\xi$ is not central. In this case we have $a^2+b^2\neq 0$, and it follows from \eqref{system} that $\beta=\gamma=0$. If $\alpha\neq 0$, we have that $a=0$, $b\neq 0$,  and the equations in \eqref{system} become
	\[ [\xi,e_1] = be_2, \quad [\xi,e_2]= -be_1, \quad [e_1,e_2]= \alpha \xi.\]
Therefore $\mathfrak g$ is isomorphic to $\mathfrak{so}(3)$ if $b\alpha>0$, whereas $\mathfrak g$ is isomorphic to $\mathfrak{sl}(2,\mathbb R)$ if $b\alpha<0$ (see \cite{Mi}). Moreover, $\eta$ is a contact form.
	
If $\alpha=0$ then $\h$ is a subalgebra of $\mathfrak g$ and hence $\eta$ is not a contact form on $\mathfrak g$. The equations in \eqref{system} become
	\[ [e_1,e_2]=0,\quad [\xi,e_1]=ae_1+be_2, \quad [\xi,e_2]=-be_1+ae_2.\]
If $b=0$ (and thus $a\neq 0$) then $\mathfrak g$ is a non-unimodular completely solvable Lie algebra, isomorphic to $\mathfrak{r}_{3,1}$, in the notation from \cite{ABDO}. On the other hand, if $b\neq 0$ then $\mathfrak g$ is a solvable Lie algebra which is not completely solvable and it is unimodular if and only if $a=0$. Moreover, it is isomorphic to $\mathfrak{r}'_{3,\frac{a}{b}}$ in the notation from \cite{ABDO}. A Lie algebra with parameters $a,b$ is isomorphic to another with parameters $a',b'$ (with both $b$ and $b'$ different from zero) if and only if $\frac{a'}{b'}=\pm \frac{a}{b}$.

\medskip

Summarizing, we have proved the following result

\begin{proposition}\label{dim3}
Let $({\mathfrak g},\varphi,\xi,\eta)$ be a $3$-dimensional almost contact Lie algebra with abelian structure. Then $\g$ is isomorphic to:
\[ \R^3,\quad \h_1^\R, \quad \mathfrak{aff}(\mathbb R)\times\mathbb R, \quad \mathfrak{so}(3),\quad \mathfrak{sl}(2,\R), \quad \mathfrak{r}_{3,1},\quad \mathfrak{r}'_{3,\lambda},    \]
with $\lambda\geq 0$.
\end{proposition}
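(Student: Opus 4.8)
The plan is to classify the $3$-dimensional almost contact Lie algebras with abelian structure by exhausting the possible Lie brackets directly, using the fact that in dimension $3$ the abelian condition reduces to the single requirement $[\mathrm{ad}_\xi,\varphi]=0$. First I would dispose of the abelian case $\g\cong\R^3$ and then assume $\g$ is not abelian. The key reduction is to fix a basis $\{e_1,e_2\}$ of $\h=\operatorname{Ker}\eta$ adapted to $J=\varphi|_\h$, say $e_2=\varphi e_1$, $e_1=-\varphi e_2$; since $\mathrm{ad}_\xi$ commutes with $\varphi$ and preserves $\h$ (normality, Proposition \ref{proposition-normality}), $\mathrm{ad}_\xi|_\h$ must be $J$-linear, hence of the form $\left(\begin{smallmatrix} a & -b\\ b & a\end{smallmatrix}\right)$. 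Writing the most general bracket $[e_1,e_2]=\alpha\xi+\beta e_1+\gamma e_2$, all structure constants are then captured by the five parameters $a,b,\alpha,\beta,\gamma$.

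Next I would impose the Jacobi identity, which for a $3$-dimensional Lie algebra amounts to a single vector equation $[[\xi,e_1],e_2]+[[e_1,e_2],\xi]+[[e_2,\xi],e_1]=0$; expanding in the given basis yields the system \eqref{system}, namely $a\alpha=0$, $a\beta+b\gamma=0$, $a\gamma-b\beta=0$. I would then split into the two natural cases dictated by whether $\xi$ is central. In Case $1$ ($a=b=0$) the system is vacuous and the commutator ideal is one-dimensional, so I would invoke the standard classification of $3$-dimensional Lie algebras with one-dimensional derived algebra (citing \cite{BD}) to identify $\g$ as $\h_1^\R$ or $\aff(\R)\times\R$ according to whether $(\beta,\gamma)=(0,0)$ or not. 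In Case $2$ ($a^2+b^2\neq0$) the last two equations of \eqref{system} force $\beta=\gamma=0$, and then the subcase $\alpha\neq0$ gives (via $a\alpha=0$) $a=0$, $b\neq0$, producing a contact structure with bracket $[\xi,e_1]=be_2$, $[\xi,e_2]=-be_1$, $[e_1,e_2]=\alpha\xi$; here I would identify $\g$ with $\mathfrak{so}(3)$ or $\mathfrak{sl}(2,\R)$ according to the sign of $b\alpha$, appealing to Milnor \cite{Mi}. The subcase $\alpha=0$ leaves $\h$ a subalgebra and I would identify the resulting solvable algebras $\mathfrak{r}_{3,1}$ (when $b=0$) and $\mathfrak{r}'_{3,\lambda}$ (when $b\neq0$, with $\lambda=|a/b|$) using the notation and isomorphism criterion of \cite{ABDO}.

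The main obstacle is not any single computation but rather ensuring that the list is exhaustive and non-redundant: every choice of parameters must be shown to land in exactly one isomorphism class, and the isomorphism identifications themselves (especially distinguishing $\mathfrak{so}(3)$ from $\mathfrak{sl}(2,\R)$ by the sign of $b\alpha$, and pinning down the continuous parameter $\lambda=|a/b|\geq0$ for the family $\mathfrak{r}'_{3,\lambda}$) must be justified. I would handle the last point by noting that rescaling $\xi$ and rotating within $\h$ acts on $(a,b)$ only through $a/b$ up to sign, which gives the stated equivalence $a'/b'=\pm a/b$ and hence the parametrization by $\lambda\geq0$. Collecting the outcomes of both cases then yields precisely the seven algebras $\R^3,\ \h_1^\R,\ \aff(\R)\times\R,\ \mathfrak{so}(3),\ \mathfrak{sl}(2,\R),\ \mathfrak{r}_{3,1},\ \mathfrak{r}'_{3,\lambda}$ with $\lambda\geq0$, completing the proof.
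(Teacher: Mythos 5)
Your proposal is correct and follows essentially the same route as the paper: the same adapted basis of $\h$, the same matrix form of $\mathrm{ad}_\xi|_\h$ forced by commutation with $\varphi$, the same Jacobi system \eqref{system}, and the same case split on whether $\xi$ is central, with identical identifications of the resulting isomorphism classes. No substantive differences to report.
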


\

\begin{remark}
Let $(\mathfrak{g}, \varphi,\xi,\eta,g)$ be a $3$-dimensional almost contact metric Lie algebra with abelian structure. Then, choosing an orthonormal basis $\{\xi,e_1,e_2\}$ such that $\varphi e_1=e_2$, the fundamental $2$-form of the structure is $\Phi=-e^{12}$, where $\{\eta,e^1,e^2\}$ is the dual basis of $\mathfrak{g}^*$. Following the notations above, we have
\begin{align*}
d\eta &= -\alpha e^{12},\\
de^1 & = -a \eta\wedge e^1+b \eta\wedge e^2-\beta e^{12},\\
de^2 & = -b \eta\wedge e^1-a \eta\wedge e^2-\gamma e^{12},
\end{align*}
and thus, we get
\[d\Phi=-de^1\wedge e^2+e^1\wedge de^2=2a\eta\wedge e^1\wedge e^2=-2a\eta\wedge\Phi.\]
Therefore, if $\alpha\ne 0$ (in which case $a=0)$, we get $d\eta=\alpha\Phi$ and the structure is $\frac{\alpha}{2}$-Sasakian. If $\alpha=0$, we have $d\eta=0$ and $d\Phi=-2a\eta\wedge\Phi$. In this case, if $a=0$ the structure is coK\"ahler, if $a\ne0$ the structure is $(-a)$-Kenmotsu.
\end{remark}

\medskip

We point out that every 3-dimensional Sasakian Lie algebra has abelian structure, since in this case we always have $[\ad_\xi,\varphi]=0$. It is known that any 3-dimensional Sasakian Lie algebra is isomorphic to either $\h_1^\R$, $\mathfrak{aff}(\mathbb R)\times\mathbb R$, $\mathfrak{so}(3)$ or $\mathfrak{sl}(2,\R)$ (see \cite{P}).

\

\subsection{5-dimensional Sasakian Lie algebras with abelian structure}\label{dim5}
In \cite{AFV} the classification of 5-dimensional Sasakian Lie algebras was provided. This classification was divided into two cases, corresponding to non-trivial center (necessarily of dimension one) or trivial center.

Our purpose is to determine all 5-dimensional Sasakian Lie algebras with abelian structure.

We consider first the case when the center is not trivial. It follows from the discussion previous to Example \ref{example-quasi} that if $\g$ is a 5-dimensional Sasakian Lie algebra with abelian structure and non trivial center then there is an isomorphism $\g=\R\xi\oplus_{-2\omega} \h$, where $\h$ is isomorphic to $\R^4$, $\aff(\R)\times \R^2$ or $\aff(\R)\times\aff(\R)$, each of them equipped with a K\"ahler structure with abelian complex structure, and $\omega$ is the corresponding K\"ahler form. We analyze each case separately; in all of them $\{e_1,e_2,e_3,e_4\}$ denotes an orthonormal basis such that $Je_1=e_2,\, Je_3=e_4$.
\begin{itemize}
	\item When $\h\cong \R^4$, then $\g$ is isomorphic to $\h_2^\R$, with the Sasakian structure from Example \ref{Heisenberg}.
	\item When $\h\cong \aff(\R)\times \R^2$,  the Lie bracket on $\g$ is given by $[e_1,e_2]=re_2+2\xi, \, [e_3,e_4]=2\xi$, for some $r\neq 0$.
	\item When $\h\cong \aff(\R)\times \aff(\R)$, the Lie bracket on $\g$ is given by $[e_1,e_2]=re_2+2\xi, \, [e_3,e_4]=se_4+2\xi$, for some $r,s\neq 0$.
\end{itemize}

\medskip

On the other hand, in the case when the center is trivial, we follow the proof of the classification given in \cite[Section 3.2]{AFV} and we add the equations imposed by condition \textsc{2.} in Definition \ref{definition-abelian}. It follows that abelian Sasakian structures occur only in cases (B3) and (B4) of that classification and any Lie algebra $\g$ which admits such a structure is given by the following equations, where $\{e_1,\ldots,e_5\}$ denotes an orthonormal basis of $\g$ and $\{e^1, \ldots,e^5\}$ is the dual basis:
\begin{align*}
de^1 & =  2\cos\theta\, (e^{12}+e^{34}), \\
de^2 & =  2\sin\theta\, (e^{12}+e^{34}), \\
de^3 & =  e^{45}+\sin\theta\, (e^{13}+e^{24}) +\cos\theta\, (e^{14}-e^{23}),\\
de^4 & =  -e^{35}-\cos\theta\, (e^{13}+e^{24}) +\sin\theta\, (e^{14}-e^{23}),\\
de^5 & =  2(e^{12}+e^{34}),
\end{align*}
for some $\theta\in[0,2\pi)$. Here the abelian Sasakian structure is given by
\[ \xi=e_5,\qquad \eta=e^5,\qquad \varphi(e_1)=-e_2,\qquad \varphi(e_3)=-e_4.\]
 It was shown in \cite{AFV} that for any value of $\theta$ the Lie algebra obtained is isomorphic to a Lie algebra denoted $\g_0$, which is non-unimodular and solvable. Therefore we obtain a $S^1$-family of abelian Sasakian structures on the Lie algebra $\g_0$. Note that this Lie algebra appears numbered 22 in the classification of 5-dimensional solvable Lie algebras with contact structure given in \cite{Di1}.

\
	
\section{Abelian almost $3$-contact structures}\label{section-3contact}

In this section we introduce the notion of abelian almost $3$-contact structures. We describe the main properties of any Lie algebra endowed with such a structure, showing that the Lie algebra carries a sphere of abelian almost contact structure. We also describe examples.
	
\begin{definition}
An \emph{almost $3$-contact Lie algebra} is a $(4n+3)$-dimensional real Lie algebra ${\mathfrak g}$ endowed with three almost contact structures $(\varphi_i,\xi_i,\eta_i)$ such that
	\begin{equation}
		\begin{split}\label{3-sasaki}
		\varphi_k=\varphi_i\varphi_j-\eta_j\otimes\xi_i=-\varphi_j\varphi_i+\eta_i\otimes\xi_j,\quad\\
		\xi_k=\varphi_i\xi_j=-\varphi_j\xi_i, \quad
		\eta_k=\eta_i\circ\varphi_j=-\eta_j\circ\varphi_i,
		\end{split}
	\end{equation}
for any even permutation $(i,j,k)$ of $(1,2,3)$. The Lie algebra $({\mathfrak g},\varphi_i,\xi_i,\eta_i)$ is said to be hypernormal if $N_{\varphi_i}=0$ for every $i=1,2,3$.
\end{definition}
	
\begin{definition}
	An \emph{almost $3$-contact metric Lie algebra} $({\mathfrak g},\varphi_i,\xi_i,\eta_i, g)$ is an almost $3$-contact Lie algebra endowed with a compatible inner product, that is an inner product $g$ satisfying
\begin{equation*}
g(\varphi_iX,\varphi_iY)=g(X,Y)-\eta_i(X)\eta_i(Y)
\end{equation*}
		for every $X,Y\in{\mathfrak g}$ and for every $i=1,2,3$.
\end{definition}
An almost $3$-contact Lie algebra ${\mathfrak g}$ splits as ${\mathfrak g}={\mathfrak h}\oplus{\mathfrak v}$, as direct sum of vector spaces, where
	\[
	{\mathfrak h} \, :=\, \bigcap_{i=1}^{3}\operatorname{Ker}\eta_i,\qquad
	{\mathfrak v}\, :=\, \langle\xi_1,\xi_2,\xi_3\rangle.
	\]
In particular $\dim{\mathfrak h}=4n$. We call any vector belonging to ${\mathfrak h}$ \emph{horizontal} and any vector belonging to ${\mathfrak v}$	\emph{vertical}. If $g$ is a compatible inner product, then $\mathfrak h$ and $\mathfrak v$ are orthogonal with respect to $g$ and the vectors $\xi_1$, $\xi_2$, $\xi_3$ are orthonormal.

\begin{definition}
	An almost $3$-contact structure $(\varphi_i,\xi_i,\eta_i)$ on a Lie algebra $\g$ will be called \textit{abelian} if each structure $(\varphi_i,\xi_i,\eta_i)$, $i=1,2,3$, is abelian.
\end{definition}

Obviously, every abelian almost $3$-contact structure on a Lie algebra is hypernormal.

\

\subsection{General properties of Lie algebras with abelian almost 3-contact structures}

We establish some fundamental properties of abelian almost 3-contact structures on Lie algebras. In fact, we show the existence of a vector $\mathcal Z\in \h$ and an endomorphism $\psi:\h\to\h$ which encode many features of the Lie algebra and the abelian structure.

\

Given an almost $3$-contact Lie algebra $({\mathfrak g},\varphi_i,\xi_i,\eta_i)$, we put $\zeta_k:=[\xi_i,\xi_j]$ for every even permutation $(i,j,k)$ of $(1,2,3)$. As a first result we prove the following lemma.
	
\begin{lemma}\label{lemma1}
	Let $({\mathfrak g},\varphi_i,\xi_i,\eta_i)$ be an almost $3$-contact Lie algebra with abelian structure. Then, for every  even permutation $(i,j,k)$ of $(1,2,3)$,
		\begin{itemize}
			\item[\textsc{1.}] $\varphi_i\zeta_j=\zeta_k=-\varphi_j\zeta_i\;$ and $\;\varphi_i\zeta_i=\varphi_j\zeta_j$;
			\item[\textsc{2.}] $\zeta_k\in\operatorname{Ker}\eta_i\cap\operatorname{Ker}\eta_j\;$ and $\;\eta_i\zeta_i=\eta_j\zeta_j$.
		\end{itemize}
\end{lemma}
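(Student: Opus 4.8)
The plan is to derive everything from two inputs: the commutation $\ad_{\xi_i}\circ\varphi_i=\varphi_i\circ\ad_{\xi_i}$ (condition \textsc{1.} of abelianness for each $i$), together with the sphere relations \eqref{3-sasaki} among the $\xi_i$, $\eta_i$, $\varphi_i$, and the normality consequences recorded in Proposition \ref{proposition-normality} (recall that an abelian structure is normal). Before starting I would record the orthogonality $\eta_i(\xi_j)=\delta_{ij}$: for $i\neq j$ it follows by writing $\xi_k=\varphi_i\xi_j$ and applying $\eta_i$, since $\eta_i\circ\varphi_i=0$ forces $\eta_i(\xi_k)=0$, and symmetrically $\eta_j(\xi_k)=0$ from $\xi_k=-\varphi_j\xi_i$. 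In particular each $\xi_j$ with $j\neq i$ lies in $\operatorname{Ker}\eta_i$, a fact used repeatedly below.

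For the first identity in \textsc{1.} I would substitute the sphere relations for the $\xi$'s into $\zeta_k=[\xi_i,\xi_j]$ and push $\varphi$ through the bracket via the commutation. Writing $\xi_j=-\varphi_i\xi_k$ gives $\zeta_k=-[\xi_i,\varphi_i\xi_k]=-\varphi_i[\xi_i,\xi_k]=\varphi_i\zeta_j$, where the middle equality is exactly $\ad_{\xi_i}\varphi_i=\varphi_i\ad_{\xi_i}$ and the last uses $[\xi_i,\xi_k]=-\zeta_j$. Symmetrically, writing $\xi_i=\varphi_j\xi_k$ gives $\zeta_k=[\varphi_j\xi_k,\xi_j]=-\varphi_j[\xi_j,\xi_k]=-\varphi_j\zeta_i$. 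This proves $\varphi_i\zeta_j=\zeta_k=-\varphi_j\zeta_i$, and reading it cyclically over the three even permutations also yields the companion relations $\varphi_j\zeta_k=\zeta_i$ and $\varphi_k\zeta_i=\zeta_j$, which I will reuse.

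I would then turn to \textsc{2.}, where normality enters for the kernel statement: since the structure is normal, Proposition \ref{proposition-normality} makes each $\operatorname{Ker}\eta_i$ invariant under $\ad_{\xi_i}$, i.e. $\eta_i([\xi_i,X])=0$ for $X\in\operatorname{Ker}\eta_i$. Applying this to $X=\xi_j\in\operatorname{Ker}\eta_i$ gives $\eta_i(\zeta_k)=0$, and applying the analogous invariance of $\operatorname{Ker}\eta_j$ to $\xi_i\in\operatorname{Ker}\eta_j$ (after rewriting $\zeta_k=-[\xi_j,\xi_i]$) gives $\eta_j(\zeta_k)=0$; hence $\zeta_k\in\operatorname{Ker}\eta_i\cap\operatorname{Ker}\eta_j$. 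For the equality $\eta_i(\zeta_i)=\eta_j(\zeta_j)$ I would use $\zeta_i=\varphi_j\zeta_k$ from the first part together with $\eta_k=\eta_i\circ\varphi_j$ from \eqref{3-sasaki}, which gives $\eta_i(\zeta_i)=(\eta_i\circ\varphi_j)(\zeta_k)=\eta_k(\zeta_k)$; cycling this produces the chain $\eta_i(\zeta_i)=\eta_k(\zeta_k)=\eta_j(\zeta_j)$, so in fact all three agree.

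Finally, for the remaining identity $\varphi_i\zeta_i=\varphi_j\zeta_j$ I would again use $\zeta_i=\varphi_j\zeta_k$ and the quadratic sphere relation in the form $\varphi_i\varphi_j=\varphi_k+\eta_j\otimes\xi_i$, obtaining $\varphi_i\zeta_i=\varphi_i\varphi_j\zeta_k=\varphi_k\zeta_k+\eta_j(\zeta_k)\,\xi_i$. The extra term vanishes precisely by the kernel statement $\eta_j(\zeta_k)=0$ just proved, leaving $\varphi_i\zeta_i=\varphi_k\zeta_k$, and cycling gives $\varphi_i\zeta_i=\varphi_j\zeta_j=\varphi_k\zeta_k$. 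The one genuine subtlety I anticipate is this last step: unlike the first assertion it is not a consequence of the commutation alone, but needs the vanishing $\eta_j(\zeta_k)=0$ fed in, so the kernel half of \textsc{2.} must be secured first. Beyond that, the only thing to watch is the bookkeeping of the even permutations and the consistent cyclic relabeling of the derived relations.
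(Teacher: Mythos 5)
Your proof is correct and follows essentially the same route as the paper: derive $\varphi_i\zeta_j=\zeta_k=-\varphi_j\zeta_i$ from $\ad_{\xi_i}\circ\varphi_i=\varphi_i\circ\ad_{\xi_i}$ and the sphere relations, deduce the kernel statement, and then obtain $\varphi_i\zeta_i=\varphi_j\zeta_j$ and $\eta_i\zeta_i=\eta_j\zeta_j$ from $\zeta_i=\varphi_j\zeta_k$ together with the quadratic relations, feeding in $\eta_j(\zeta_k)=0$ to kill the extra term. The only (immaterial) difference is that the paper reads $\zeta_k\in\operatorname{Ker}\eta_i$ directly from $\zeta_k=\varphi_i\zeta_j\in\operatorname{Im}\varphi_i$, whereas you invoke the $\ad_{\xi_i}$-invariance of $\operatorname{Ker}\eta_i$ from Proposition \ref{proposition-normality}.
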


\begin{proof}
Since $\mathrm{ad}_{\xi_i}\circ\varphi_i=\varphi_i\circ\mathrm{ad}_{\xi_i}$, we have
		\[ \varphi_i\zeta_j=\varphi_i[\xi_k,\xi_i]=[\varphi_i\xi_k,\xi_i]=[-\xi_j,\xi_i]=\zeta_k,\]
which implies that $\zeta_k\in\operatorname{Ker}\eta_i$. Analogously, one shows that $\varphi_j\zeta_i=-\zeta_k\in\operatorname{Ker}\eta_j$.
Consequently, we have
		\begin{align*}
		\varphi_j\zeta_j&=\varphi_j(\varphi_k\zeta_i)=\varphi_i\zeta_i+\eta_k(\zeta_i)\xi_j=\varphi_i\zeta_i,\\
		\eta_j\zeta_j&=\eta_j(\varphi_k\zeta_i)=\eta_i\zeta_i,
		\end{align*}
		thus completing the proof.
	\end{proof}

\
	
As a consequence of 1. in Lemma \ref{lemma1}, for an abelian almost $3$-contact structure we can define
\begin{equation*}\label{def-zeta}
{\mathcal Z}:=\varphi_i\zeta_i, \qquad i=1,2,3.
\end{equation*}
Note that ${\mathcal Z}\in{\mathfrak h}$. We also remark that property 2. in Lemma \ref{lemma1} can be equivalently expressed by
	\[\eta_r([\xi_s,\xi_t])=2\delta\epsilon_{rst}\]
for some real number $\delta$ and for every $r,s,t=1,2,3$, where $\epsilon_{rst}$ denotes the totally skew-symmetric symbol. Therefore, $2\delta=\eta_i\zeta_i$ for all $i=1,2,3$, and we have
	\begin{equation}\label{zeta}
	\zeta_i=-\varphi_i{\mathcal Z}+2\delta\xi_i.
	\end{equation}
Consequently, we can state the following
	
\begin{proposition}\label{proposition-subalgebra}
	Let $({\mathfrak g},\varphi_i,\xi_i,\eta_i)$ be an almost $3$-contact Lie algebra with abelian structure. Then the vertical subspace ${\mathfrak v}$ is a subalgebra of $\mathfrak g$ if and only if ${\mathcal Z}=0$, in which case either $\mathfrak v$ is abelian or isomorphic to $\mathfrak{so}(3)$, according to $\delta=0$ or $\delta\ne 0$ respectively.
\end{proposition}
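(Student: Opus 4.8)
The plan is to read everything off the structural identity \eqref{zeta}, which already records the full bracket among the Reeb vectors: for every even permutation $(i,j,k)$ of $(1,2,3)$ one has $[\xi_i,\xi_j]=\zeta_k=-\varphi_k{\mathcal Z}+2\delta\xi_k$. Since $\v=\langle\xi_1,\xi_2,\xi_3\rangle$ and the only possibly nonzero brackets inside $\v$ are the $\zeta_k$, the subalgebra condition is simply that each $\zeta_k$ lie in $\v$. So the whole statement reduces to decomposing $\zeta_k$ along $\g=\h\oplus\v$ and reading off when the horizontal part dies.

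First I would check that $\varphi_k$ preserves $\h$, so that $-\varphi_k{\mathcal Z}$ is genuinely the horizontal component of $\zeta_k$ and $2\delta\xi_k$ the vertical one. This is immediate from the second and third lines of \eqref{3-sasaki}: for $X\in\h$ one has $\eta_k(\varphi_k X)=0$ and, for $m\neq k$, $\eta_m\circ\varphi_k=\pm\eta_p$ with $p\neq m,k$, so $\eta_m(\varphi_k X)=0$ for all $m$. Granting this, $\zeta_k\in\v$ if and only if $\varphi_k{\mathcal Z}=0$. Now ${\mathcal Z}\in\h$ and $J_k:=\varphi_k|_{\h}$ satisfies $J_k^2=-I$, hence $J_k$ is injective on $\h$; therefore $\varphi_k{\mathcal Z}=0$ is equivalent to ${\mathcal Z}=0$. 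A single index already forces ${\mathcal Z}=0$, and conversely ${\mathcal Z}=0$ makes every $\zeta_k$ vertical, giving the stated equivalence ``$\v$ is a subalgebra $\iff{\mathcal Z}=0$''.

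It then remains to identify $\v$ in the case ${\mathcal Z}=0$, where \eqref{zeta} collapses to $[\xi_i,\xi_j]=2\delta\xi_k$ for every even permutation. If $\delta=0$ all brackets vanish and $\v$ is abelian. If $\delta\neq 0$ I would rescale, setting $e_i:=\tfrac{1}{2\delta}\xi_i$, so that $[e_i,e_j]=\tfrac{1}{(2\delta)^2}\,2\delta\,\xi_k=e_k$ for every even permutation, which are exactly the defining relations of $\mathfrak{so}(3)$; note the rescaling is valid for either sign of $\delta$. There is no serious obstacle in this proof: the only two points demanding a moment's care are verifying that $\varphi_k$ maps $\h$ into $\h$ (so that \eqref{zeta} really is the horizontal--vertical splitting of $\zeta_k$), and observing that the \emph{uniform} sign in the cyclic brackets yields the compact form $\mathfrak{so}(3)$ rather than $\mathfrak{sl}(2,\R)$. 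Both are routine, so the argument is essentially a direct reading of \eqref{zeta} together with the injectivity of $J_k$ on $\h$.
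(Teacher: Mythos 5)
Your argument is correct and is precisely the reading of equation \eqref{zeta} that the paper intends: it states the proposition as an immediate consequence of $\zeta_i=-\varphi_i\mathcal{Z}+2\delta\xi_i$, with the horizontal component $-\varphi_i\mathcal{Z}$ vanishing exactly when $\mathcal{Z}=0$ by injectivity of $\varphi_i|_{\mathfrak h}$, followed by the same rescaling to identify $\mathfrak{so}(3)$ when $\delta\neq 0$. Your two points of care (that $\varphi_k$ preserves $\mathfrak h$, and the uniform cyclic sign giving the compact form) are both verified correctly.
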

	
\
	
We prove now a second basic lemma about abelian almost 3-contact structures.
	
\begin{lemma}\label{lemma}
	Let $({\mathfrak g},\varphi_i,\xi_i,\eta_i)$ be an almost $3$-contact Lie algebra with abelian structure. Then, for every  even permutation $(i,j,k)$ of $(1,2,3)$ and for every $X,Y\in\mathfrak h$, the following hold:
		\begin{enumerate}
			\item[\textsc{1.}] $[\xi_i,\varphi_iX]=[\xi_j,\varphi_jX];$
			\item[\textsc{2.}] $[\xi_i,\varphi_jX]=-[\xi_j,\varphi_iX]=[\xi_k,X]=\varphi_i[\xi_j,X]=-\varphi_j[\xi_i,X]$;
			\item[\textsc{3.}] $[\xi_i,X]\in\mathfrak h$;
			\item[\textsc{4.}] $[\varphi_i X,\varphi_jY]=[\varphi_k X,Y]=-[\varphi_j X,\varphi_iY]$.
		\end{enumerate}
\end{lemma}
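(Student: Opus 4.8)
The plan is to exploit the two equivalent forms of the abelian conditions for each individual structure $(\varphi_i,\xi_i,\eta_i)$: the commutation $\mathrm{ad}_{\xi_i}\circ\varphi_i=\varphi_i\circ\mathrm{ad}_{\xi_i}$ (condition \textsc{1.} of Definition \ref{definition-abelian}) and the identity $[\varphi_i U,V]+[U,\varphi_i V]=0$ for $U,V\in\operatorname{Ker}\eta_i$, which is the equivalent form of condition \textsc{2.} recorded just after Definition \ref{definition-abelian}. I will use throughout that $\varphi_i$ preserves $\h=\bigcap_r\operatorname{Ker}\eta_r$: for $X\in\h$ the relations $\eta_r=\pm\eta_s\circ\varphi_t$ in \eqref{3-sasaki} force $\eta_r(\varphi_i X)=0$ for all $r$. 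Hence on $\h$ the restrictions $\varphi_i|_{\h}$ obey the quaternionic identities $\varphi_k=\varphi_i\varphi_j=-\varphi_j\varphi_i$, since the correction terms $\eta_j\otimes\xi_i$ and $\eta_i\otimes\xi_j$ vanish on horizontal vectors. As a first step I would extract the \lq mixed' equalities in \textsc{2.} that use only condition \textsc{2.}: since $\xi_j,X\in\operatorname{Ker}\eta_i$, applying the equivalent form of \textsc{2.} for the $i$-th structure to $(\xi_j,X)$ and using $\varphi_i\xi_j=\xi_k$ gives $[\xi_k,X]=-[\xi_j,\varphi_i X]$, while the same identity for the $j$-th structure applied to $(\xi_i,X)$, together with $\varphi_j\xi_i=-\xi_k$, yields $[\xi_i,\varphi_j X]=[\xi_k,X]$.

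Next I would deduce \textsc{3.} Because an abelian structure is normal, Proposition \ref{proposition-normality} makes each $\operatorname{Ker}\eta_i$ invariant under $\mathrm{ad}_{\xi_i}$, so $\eta_i([\xi_i,X])=0$. Applying $\eta_i$ to $[\xi_i,\varphi_j X]=[\xi_k,X]$ and using that $\varphi_j X\in\operatorname{Ker}\eta_i$ is $\mathrm{ad}_{\xi_i}$-invariant gives $\eta_i([\xi_k,X])=0$; similarly, applying $\eta_j$ to $[\xi_j,\varphi_i X]=-[\xi_k,X]$ gives $\eta_j([\xi_k,X])=0$. As $(i,j,k)$ ranges over the even permutations, these identities together with the diagonal vanishing cover all nine components $\eta_r([\xi_s,X])$, whence $[\xi_s,X]\in\h$. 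I expect this to be the main obstacle of the lemma, since condition \textsc{1.} only controls the diagonal component $\eta_i([\xi_i,X])$, and the off-diagonal vanishing must be teased out of the mixed relations just established rather than proved directly.

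With horizontality secured I would complete \textsc{2.} Writing $v_i:=[\xi_i,X]\in\h$ and using $\varphi_i|_{\h}=(\varphi_j\varphi_k)|_{\h}$ together with the commutation condition for $\varphi_j$, one computes $[\xi_j,\varphi_i X]=\varphi_j[\xi_j,\varphi_k X]=\varphi_j[\xi_i,X]$, where the last equality is the cyclic shift of $[\xi_i,\varphi_j X]=[\xi_k,X]$; comparing with $[\xi_j,\varphi_i X]=-[\xi_k,X]$ gives $[\xi_k,X]=-\varphi_j v_i$. The cyclic shifts of this relation, combined with $\varphi_i^2=-I$ on $\h$, then produce $[\xi_k,X]=\varphi_i v_j$, so that all five expressions in \textsc{2.} coincide. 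Finally \textsc{1.} follows at once: the commutation condition gives $[\xi_i,\varphi_i X]=\varphi_i v_i$ and $[\xi_j,\varphi_j X]=\varphi_j v_j$, and feeding in $v_i=-\varphi_k v_j$ together with the quaternionic identity $\varphi_i\varphi_k=-\varphi_j$ on $\h$ yields $\varphi_i v_i=\varphi_j v_j$.

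The purely horizontal identities \textsc{4.} are independent and come straight from condition \textsc{2.} On $\h$ one has $\varphi_k X=\varphi_i(\varphi_j X)$, so applying the equivalent form of \textsc{2.} for the $i$-th structure to the pair $(\varphi_j X,Y)$ gives $[\varphi_k X,Y]=-[\varphi_j X,\varphi_i Y]$; using instead $\varphi_k X=-\varphi_j(\varphi_i X)$ and the $j$-th structure applied to $(\varphi_i X,Y)$ gives $[\varphi_k X,Y]=[\varphi_i X,\varphi_j Y]$, which is exactly \textsc{4.} The only care needed here is to keep track of the two presentations $\varphi_k=\varphi_i\varphi_j=-\varphi_j\varphi_i$ on $\h$ so that the two sign computations match.
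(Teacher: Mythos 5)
Your proof is correct; every step checks out, including the two points that require care, namely that the quaternionic identities $\varphi_k=\varphi_i\varphi_j=-\varphi_j\varphi_i$ hold only after restriction to $\h$, and that the six off-diagonal components $\eta_r([\xi_s,X])$, $r\neq s$, really are all covered by cycling the mixed relations. The toolkit is the same as the paper's (the two abelian conditions for each structure plus the sphere relations \eqref{3-sasaki}), but the logical organization differs in two places. For item \textsc{1.} the paper has a one-line argument: since $\xi_i$ and $\varphi_iX$ both lie in $\operatorname{Ker}\eta_k$, the multiplicative form of the abelian condition for the $k$-th structure gives $[\xi_i,\varphi_iX]=[\varphi_k\xi_i,\varphi_k\varphi_iX]=[\xi_j,\varphi_jX]$ directly, whereas you derive item \textsc{1.} last, from the chain $v_i=-\varphi_kv_j$. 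For item \textsc{3.} the paper writes $X=\varphi_iZ$ and uses item \textsc{1.} together with $\operatorname{ad}_{\xi_r}\circ\varphi_r=\varphi_r\circ\operatorname{ad}_{\xi_r}$ to exhibit the common value as $[\xi_r,\varphi_rZ]=\varphi_r[\xi_r,Z]\in\operatorname{Ker}\eta_r$ for all three values of $r$ simultaneously; your component-by-component argument via the invariance statement in Proposition \ref{proposition-normality} reaches the same conclusion and is a perfectly sound alternative, just slightly longer. Item \textsc{4.} is handled essentially identically in both, the paper using the multiplicative form where you use the additive one. The paper's route buys brevity; yours makes more explicit where each hypothesis enters, in particular that the horizontality in item \textsc{3.} is forced by the off-diagonal relations rather than by the commutation condition alone.
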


\begin{proof}
Let us consider $X\in\mathfrak h$. Since $\xi_i,\varphi_iX\in\operatorname{Ker}\eta_k$, we get
		\[[\xi_i,\varphi_iX]=[\varphi_k\xi_i,\varphi_k\varphi_iX]=[\xi_j,\varphi_jX],\]
which gives 1. Analogously,
		\[[\xi_i,\varphi_jX]=[\varphi_k\xi_i,\varphi_k\varphi_jX]=-[\xi_j,\varphi_iX]=-[\varphi_i\xi_j,\varphi_i^2X]=[\xi_k,X].\]
Therefore, we also have $[\xi_i,\varphi_kX]=-[\xi_j,X]$ and  $[\xi_j,\varphi_kX]=[\xi_i,X]$, so that
		\begin{align*}
		[\xi_i,\varphi_jX]&=-[\xi_i,\varphi_i\varphi_kX]=-\varphi_i[\xi_i,\varphi_kX]=\varphi_i[\xi_j,X],\\
		[\xi_j,\varphi_iX]&=[\xi_j,\varphi_j\varphi_kX]=\varphi_j[\xi_j,\varphi_kX]=\varphi_j[\xi_i,X],
		\end{align*}
which completes the proof of 2. As regards 3., taking $X=\varphi_iZ$, $Z\in\mathfrak h$, and applying 1., we have
		\[[\xi_i,X]=[\xi_i,\varphi_iZ]=[\xi_j,\varphi_jZ]=[\xi_k,\varphi_kZ],\]
which belongs to $\mathfrak h$ since each $\mathrm{ad}_{\xi_r}$ commutes with $\varphi_r$. Finally, for every $X,Y\in\mathfrak h$, we have
		\[[\varphi_iX,\varphi_jY]=-[\varphi_j\varphi_iX,Y]=[\varphi_kX,Y]=[\varphi_i\varphi_jX,Y]=-[\varphi_jX,\varphi_iY].\]
\end{proof}
	
\begin{remark}
	With computations analogous to the ones in the proof of Lemma \ref{lemma}, one can prove that if $({\mathfrak g},\varphi_i,\xi_i,\eta_i)$ is an almost $3$-contact Lie algebra such that $\varphi_1$ and $\varphi_2$ are abelian then $\varphi_3$ is also abelian.
\end{remark}	
	
\
	
As a consequence of Lemma \ref{lemma}, given an almost $3$-contact Lie algebra with abelian structure, one can define an endomorphism
\begin{equation}\label{psi}
\psi:{\mathfrak h}\to{\mathfrak h} \qquad \psi:=(\mathrm{ad}_{\xi_i}\circ\varphi_i)|_{\mathfrak h}=(\varphi_i\circ\mathrm{ad}_{\xi_i})|_{\mathfrak h},\quad i=1,2,3.
\end{equation}

\medskip

\begin{proposition}\label{adZ1}
The vector $\mathcal Z\in\h$ satisfies $\psi( \mathcal Z)=0$. In particular, $[\xi,\mathcal Z]=0$ for all $\xi\in\v$.
\end{proposition}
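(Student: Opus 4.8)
The plan is to compute $\psi(\mathcal Z)$ directly and then exploit the crucial feature that, by its very definition in \eqref{psi}, $\psi$ does not depend on the chosen index $i$.

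First I would combine the definition $\psi=(\ad_{\xi_i}\circ\varphi_i)|_{\h}$ with the formula \eqref{zeta}. Since $\mathcal Z\in\h$ and $\varphi_i$ preserves $\h$, we have $\varphi_i\mathcal Z=-\zeta_i+2\delta\xi_i\in\h$, and therefore
\[
\psi(\mathcal Z)=[\xi_i,\varphi_i\mathcal Z]=[\xi_i,-\zeta_i+2\delta\xi_i]=-[\xi_i,\zeta_i],
\]
because $[\xi_i,\xi_i]=0$. Writing $w_i:=[\xi_i,\zeta_i]$, this computation shows $w_i=-\psi(\mathcal Z)$ for every $i=1,2,3$; in particular, as $\psi(\mathcal Z)$ is a single well-defined vector, we obtain $w_1=w_2=w_3$.

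Next I would extract a second, independent relation among the $w_i$ from the Jacobi identity for the triple $\xi_1,\xi_2,\xi_3$. Recalling that $\zeta_i=[\xi_j,\xi_k]$ for the even permutation $(i,j,k)$, the cyclic sum
\[
[\xi_1,[\xi_2,\xi_3]]+[\xi_2,[\xi_3,\xi_1]]+[\xi_3,[\xi_1,\xi_2]]=0
\]
reads exactly $w_1+w_2+w_3=0$. Together with $w_1=w_2=w_3$ this forces $3w_1=0$, hence $w_i=0$ for all $i$, and therefore $\psi(\mathcal Z)=0$. For the final assertion, note that $\mathcal Z\in\h$ and $[\xi_i,\mathcal Z]\in\h$ by part \textsc{3.} of Lemma \ref{lemma}, so the alternative expression $\psi=(\varphi_i\circ\ad_{\xi_i})|_{\h}$ gives $0=\psi(\mathcal Z)=\varphi_i([\xi_i,\mathcal Z])$; since $\varphi_i|_{\h}$ squares to $-I$ and is thus injective on $\h$, we conclude $[\xi_i,\mathcal Z]=0$ for each $i$, and by linearity $[\xi,\mathcal Z]=0$ for all $\xi\in\v$.

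The only genuinely delicate point is recognizing how the two relations $w_1=w_2=w_3$ and $w_1+w_2+w_3=0$ interact: neither suffices on its own, and it is precisely the index-independence of $\psi$ (which collapses three a priori distinct vectors into one) combined with the single scalar Jacobi constraint that forces all of them to vanish. Everything else is a routine application of \eqref{zeta}, Lemma \ref{lemma}, and the invertibility of $\varphi_i$ on $\h$.
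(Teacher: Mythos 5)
Your proof is correct and follows essentially the same route as the paper: both identify $\psi(\mathcal Z)$ with $-[\xi_i,[\xi_j,\xi_k]]$ via \eqref{zeta} and then invoke the Jacobi identity for $\xi_1,\xi_2,\xi_3$ to conclude $3\psi(\mathcal Z)=0$, finishing with the injectivity of $\varphi_i|_{\h}$. Your explicit remark that the index-independence of $\psi$ collapses the three vectors $w_i$ into one is exactly what the paper uses implicitly when it writes each cyclic term as $-\psi(\mathcal Z)$.
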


\begin{proof}
	Recall from \eqref{zeta} that $\mathcal Z$ satisfies
	\[ [\xi_j,\xi_k]=-\varphi_i \mathcal{Z} +2\delta \xi_i,\]
	for any even permutation $(i,j,k)$ of $(1,2,3)$. It follows that
	\begin{equation}\label{Znot}
	[\xi_i,[\xi_j,\xi_k]]= -[\xi_i,\varphi_i\mathcal Z]=-\psi (\mathcal Z).
	\end{equation}
	On the other hand, using the Jacobi identity and \eqref{Znot}, we have that
	\[ 0 = [\xi_i,[\xi_j,\xi_k]] + [\xi_j,[\xi_k,\xi_i]] + [\xi_k,[\xi_i,\xi_j]] = -3 \psi (\mathcal Z).   \]
	This implies $\varphi_i([\xi_i,\mathcal Z])=0$ and therefore $[\xi_i,\mathcal Z]=0$ for all $i$.
\end{proof}

\

As an immediate consequence of Proposition \ref{proposition-subalgebra} and Proposition \ref{adZ1} we get

\begin{corollary}\label{coro-invertible}
	If $\psi:\mathfrak h\to \mathfrak h$ is invertible then  $\mathfrak v$ is a subalgebra of $\mathfrak g$.
\end{corollary}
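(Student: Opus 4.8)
The plan is to obtain the conclusion as an immediate deduction from the two preceding propositions, with no new computation required. The key observation is that the single relation $\psi(\mathcal Z)=0$, already established, becomes decisive the moment $\psi$ is known to be injective.

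First I would recall that Proposition \ref{adZ1} gives $\psi(\mathcal Z)=0$, where $\mathcal Z\in\h$ is the horizontal vector $\mathcal Z=\varphi_i\zeta_i$ and $\psi:\h\to\h$ is the endomorphism defined in \eqref{psi}. Since by hypothesis $\psi$ is invertible, its kernel is trivial, so from $\psi(\mathcal Z)=0$ we conclude $\mathcal Z=0$. At this point I would invoke Proposition \ref{proposition-subalgebra}, which asserts that the vertical subspace $\mathfrak v=\langle\xi_1,\xi_2,\xi_3\rangle$ is a subalgebra of $\g$ if and only if $\mathcal Z=0$. Combining these two facts yields that $\mathfrak v$ is a subalgebra, as claimed.

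There is essentially no obstacle to overcome here: all the substance has been absorbed into Proposition \ref{adZ1} (the vanishing of $\psi$ on $\mathcal Z$, proved via the Jacobi identity applied to $\xi_1,\xi_2,\xi_3$) and into the characterization in Proposition \ref{proposition-subalgebra} (which rests on formula \eqref{zeta} expressing $\zeta_i=-\varphi_i\mathcal Z+2\delta\xi_i$). The corollary is therefore a one-line consequence, and the only point meriting care is simply to state that invertibility of an endomorphism forces its kernel to be $\{0\}$, so that $\psi(\mathcal Z)=0$ compels $\mathcal Z=0$.
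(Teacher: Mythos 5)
Your proposal is correct and coincides with the paper's own argument: the corollary is stated there as an immediate consequence of Proposition \ref{adZ1} (which gives $\psi(\mathcal Z)=0$) and Proposition \ref{proposition-subalgebra} (which characterizes $\mathfrak v$ being a subalgebra by $\mathcal Z=0$), exactly as you deduce. Nothing further is needed.
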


This result will be useful later when we classify 7-dimensional Lie algebras admitting abelian almost 3-contact structures.

\

The next result is an easy consequence of Lemma \ref{lemma}.

\begin{lemma}\label{ker-im}
	$\operatorname{Ker} \ad_{\xi_i}|_{\h}=\operatorname{Ker} \psi$ and $\operatorname{Im} \ad_{\xi_i}|_{\h}=\operatorname{Im}\psi$ for any $i=1,2,3$, and these subspaces of $\h$ are $\varphi_i$-invariant and $\psi$-invariant for any $i=1,2,3$. Therefore they have dimension multiple of $4$.
\end{lemma}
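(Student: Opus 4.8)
The plan is to read off both identities from the two factorizations of $\psi$ in \eqref{psi}, exploiting that $\varphi_i$ restricts to a bijection of $\h$. Since $\eta_i$ vanishes on $\h$, the relation $\varphi_i^2=-I+\eta_i\otimes\xi_i$ gives $(\varphi_i|_\h)^2=-I$, so $\varphi_i|_\h$ is invertible on $\h$ (with inverse $-\varphi_i|_\h$); and by the third assertion of Lemma \ref{lemma}, $\ad_{\xi_i}$ maps $\h$ into $\h$, so $\ad_{\xi_i}|_\h$ and hence $\psi$ are genuine endomorphisms of $\h$. Writing $\psi=\varphi_i\circ\ad_{\xi_i}|_\h$ and noting that composing with the bijection $\varphi_i|_\h$ on the left does not change the kernel yields $\operatorname{Ker}\psi=\operatorname{Ker}\ad_{\xi_i}|_\h$; writing instead $\psi=\ad_{\xi_i}\circ\varphi_i|_\h$ and using $\varphi_i|_\h(\h)=\h$ yields $\operatorname{Im}\psi=\operatorname{Im}\ad_{\xi_i}|_\h$. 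As $i$ is arbitrary and $\psi$ itself does not depend on $i$, this simultaneously shows that $\operatorname{Ker}\ad_{\xi_i}|_\h$ and $\operatorname{Im}\ad_{\xi_i}|_\h$ are independent of $i$.

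Next I would verify the invariance statements. The $\psi$-invariance of $\operatorname{Ker}\psi$ and $\operatorname{Im}\psi$ holds automatically for any endomorphism. For the $\varphi_i$-invariance I would use the commutation $\ad_{\xi_i}\circ\varphi_i=\varphi_i\circ\ad_{\xi_i}$: if $X\in\operatorname{Ker}\ad_{\xi_i}|_\h$ then $[\xi_i,\varphi_iX]=\varphi_i[\xi_i,X]=0$, so $\varphi_iX\in\operatorname{Ker}\ad_{\xi_i}|_\h$; and if $Y=[\xi_i,X]\in\operatorname{Im}\ad_{\xi_i}|_\h$ with $X\in\h$, then $\varphi_iY=[\xi_i,\varphi_iX]\in\operatorname{Im}\ad_{\xi_i}|_\h$ since $\varphi_iX\in\h$. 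Combined with the $i$-independence from the first step, this shows that the single subspace $\operatorname{Ker}\psi$ (respectively $\operatorname{Im}\psi$) is invariant under each of $\varphi_1,\varphi_2,\varphi_3$ at once.

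Finally, for the dimension claim I would invoke the hypercomplex structure on $\h$. Restricting \eqref{3-sasaki} to $\h$, where all $\eta_i$ vanish, gives $J_iJ_j=J_k=-J_jJ_i$ for $J_i:=\varphi_i|_\h$ and every even permutation $(i,j,k)$, so $\{J_1,J_2,J_3\}$ makes $\h$ an $\H$-module. A subspace invariant under all three $J_i$ is then an $\H$-submodule, hence free over $\H$, and its real dimension is a multiple of $4$. Applying this to $\operatorname{Ker}\psi$ and $\operatorname{Im}\psi$ completes the proof.

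Where the content lies: the statement is almost entirely formal once the two factorizations of $\psi$ and the invertibility of $\varphi_i|_\h$ are in hand, so I do not expect a real obstacle. The only point needing care is the bookkeeping of $i$-independence, which is precisely what promotes the three separate $\varphi_i$-invariances into a genuine quaternionic invariance and thereby delivers the divisibility by $4$.
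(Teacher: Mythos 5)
Your proof is correct and is essentially the argument the paper has in mind: the authors state Lemma \ref{ker-im} without proof as ``an easy consequence of Lemma \ref{lemma}'', and your two factorizations of $\psi$, the commutation $\ad_{\xi_i}\circ\varphi_i=\varphi_i\circ\ad_{\xi_i}$, and the quaternionic-module argument for divisibility by $4$ are exactly the details being left to the reader. The one point worth highlighting as well done is your use of the $i$-independence of $\operatorname{Ker}\psi$ and $\operatorname{Im}\psi$ to upgrade each separate $\varphi_i$-invariance into invariance under all three $J_i$ simultaneously, which is what makes the dimension count work.
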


\begin{lemma}\label{Ti}
	Let $({\mathfrak g},\varphi_i,\xi_i,\eta_i)$ be an almost $3$-contact Lie algebra with abelian structure. Then, for any $X\in\h$,
	\begin{enumerate}
		\item[\textsc{1.}] $[\xi_i,[\xi_i,X]]=-\psi^2(X)$, for all $i=1,2,3$,
		\item[\textsc{2.}] $[\xi_i,[\xi_j,X]]=-[\xi_j,[\xi_i,X]]=-\psi([\xi_k,X]) $ for any even permutation $(i,j,k)$ of $(1,2,3)$.
	\end{enumerate}
\end{lemma}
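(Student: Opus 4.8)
The plan is to treat everything as an identity between endomorphisms of $\h$. By part \textsc{3.} of Lemma \ref{lemma} each $\mathrm{ad}_{\xi_r}$ restricts to an endomorphism $A_r:=\mathrm{ad}_{\xi_r}|_{\h}\colon\h\to\h$, and I would record the three facts that drive the whole argument: the two definitions $\psi=\varphi_r\circ A_r=A_r\circ\varphi_r$ on $\h$; the identity $\varphi_r^2=-I$ on $\h$ (so that $\varphi_r$ is invertible on $\h$ with $\varphi_r^{-1}=-\varphi_r$); and the abelian condition $\mathrm{ad}_{\xi_r}\circ\varphi_r=\varphi_r\circ\mathrm{ad}_{\xi_r}$, which is the \emph{only} device allowing a $\varphi$ to be moved across an $\mathrm{ad}_\xi$, and only when the two indices agree. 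Keeping this index-matching straight (and the resulting signs) is the sole point requiring attention.

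For part \textsc{1.} I would expand $\psi^2(X)$ with $\psi=\varphi_i\circ A_i$ applied twice, obtaining $\psi^2(X)=\varphi_i\big[\xi_i,\varphi_i[\xi_i,X]\big]$. Since $[\xi_i,X]\in\h$, the abelian condition lets me pull $\varphi_i$ through the outer bracket, $\big[\xi_i,\varphi_i[\xi_i,X]\big]=\varphi_i[\xi_i,[\xi_i,X]]$, and then $\psi^2(X)=\varphi_i^2[\xi_i,[\xi_i,X]]=-[\xi_i,[\xi_i,X]]$, using once more that $[\xi_i,[\xi_i,X]]\in\h$. This is precisely the assertion.

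For part \textsc{2.} the idea is to turn a double bracket with mismatched indices into $\psi$ applied to a single bracket, after first trading one adjoint for another. Reading part \textsc{2.} of Lemma \ref{lemma} as operator identities on $\h$ gives $\varphi_i\circ A_j=A_k$ and $\varphi_j\circ A_i=-A_k$; applying $\varphi_i$ (resp. $\varphi_j$) and using $\varphi_r^2=-I$ yields $A_j=-\varphi_i\circ A_k$ and $A_i=\varphi_j\circ A_k$. Hence
\[
A_iA_j=-A_i\varphi_iA_k=-\psi\circ A_k,\qquad A_jA_i=A_j\varphi_jA_k=\psi\circ A_k,
\]
by $A_i\circ\varphi_i=\psi=A_j\circ\varphi_j$. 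Translated back, these read $[\xi_i,[\xi_j,X]]=-\psi([\xi_k,X])$ and $[\xi_j,[\xi_i,X]]=\psi([\xi_k,X])$, which together are exactly the two stated equalities, and their sum gives $A_iA_j=-A_jA_i$, the skew-symmetry in $i,j$. The only real obstacle is extracting the correct signs in the relations $A_j=-\varphi_iA_k$ and $A_i=\varphi_jA_k$ from Lemma \ref{lemma}; once these are secured, the remainder is a one-line computation resting entirely on $\psi=A_r\varphi_r=\varphi_rA_r$ and $\varphi_r^2=-I$.
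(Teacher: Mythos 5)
Your proof is correct and follows essentially the same route as the paper: both arguments treat $\ad_{\xi_r}|_{\h}$, $\varphi_r$, and $\psi$ as endomorphisms of $\h$, use the commutation $\ad_{\xi_r}\circ\varphi_r=\varphi_r\circ\ad_{\xi_r}$ together with $\varphi_r^2=-I$ for part \textsc{1}, and invoke the operator form of part \textsc{2} of Lemma \ref{lemma} (namely $\varphi_i\circ\ad_{\xi_j}|_{\h}=\ad_{\xi_k}|_{\h}$) for part \textsc{2}. The only difference is cosmetic: the paper inserts $-\varphi_i^2=I$ into $T_iT_j$ and regroups, whereas you first solve for $A_j=-\varphi_iA_k$ and substitute, which amounts to the same computation.
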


\begin{proof}
Let us denote for simplicity $T_i:=\ad_{\xi_i}|_{\h}:\h \to \h$; also in this proof $\varphi_i$ denotes an endomorphism of $\h$ for all $i$. Note that $T_i$ commutes with $\varphi_i$ and also with $\psi$. For the first statement we compute
\[ \psi^2=T_i\circ \varphi_i \circ T_i \circ \varphi_i=T_i^2\circ \varphi_i^2=-T_i^2.\]
As for the second, we have $T_iT_j=-T_i\circ \varphi_i^2 \circ T_j=-\psi\circ T_k$,
where we have used $\varphi_i\circ T_j=T_k$ from Lemma \ref{lemma}. Analogously, $T_jT_i=\psi\circ T_k$, and the result follows.
\end{proof}

\

We study next the adjoint action of the distinguished element $\mathcal Z\in\h$. We have already proved in Proposition \ref{adZ1} that its action on $\mathfrak v$ is trivial, we analyze next its action on $\mathfrak h$.

We point out first that it follows from the Jacobi identity and Lemma \ref{Ti} that, for $i\neq j$,
\begin{equation}\label{jacobi2}
[[\xi_i,\xi_j],X]=2[\xi_i,[\xi_j,X]], \qquad X\in\mathfrak h.
\end{equation}

Next, for $X\in\mathfrak h$, we compute:
\begin{align*}
[\mathcal Z,X] & = [\varphi_i \mathcal Z,\varphi_i X] \\
& = [2\delta \xi_i-[\xi_j,\xi_k],\varphi_i X] \\
& = 2\delta [\xi_i,\varphi_i X]-2[\xi_j,[\xi_k,\varphi_i X]] \quad \text{(using \eqref{jacobi2})} \\
& = 2\delta \psi( X) -2[\xi_j,[\xi_j, X]] \quad \text{(using Lemma  \ref{lemma})} \\
& = 2(\delta \psi+\psi^2)(X) \quad \text{(using Lemma \ref{Ti})}.
\end{align*}
Therefore,
\begin{equation}\label{adZ}
\operatorname{ad}_{\mathcal Z}|_{\mathfrak h}=2(\psi^2+\delta \psi).
\end{equation}
Equation \eqref{adZ} together with Lemma \ref{adZ1} imply the following:

\begin{corollary}\label{Zcentral}
	$\mathcal Z$ is a central element of $\mathfrak g$ if and only if the endomorphism $\psi$ of $\mathfrak h$ satisfies $\psi^2+\delta \psi=0$.
\end{corollary}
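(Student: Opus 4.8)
The plan is to exploit the splitting $\g=\h\oplus\v$ and reduce the centrality of $\mathcal Z$ to two independent conditions, one on each summand, of which only one is nontrivial. Since $\mathcal Z$ is central precisely when $\ad_{\mathcal Z}$ annihilates all of $\g$, and $\g$ is spanned by $\h$ together with $\xi_1,\xi_2,\xi_3$, it suffices to check the action of $\ad_{\mathcal Z}$ on horizontal vectors and on the three Reeb vectors $\xi_i$ separately.

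First I would dispose of the vertical part. By Proposition \ref{adZ1} we already know $[\xi_i,\mathcal Z]=0$ for every $i=1,2,3$, equivalently $[\mathcal Z,\v]=0$. Thus $\ad_{\mathcal Z}$ vanishes on $\v$ unconditionally, and this imposes no constraint whatsoever on $\psi$. The only remaining obstruction to centrality therefore lives entirely on $\h$.

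Next I would invoke the already-established formula \eqref{adZ}, namely $\ad_{\mathcal Z}|_{\h}=2(\psi^2+\delta\psi)$. Combining the two observations, $\mathcal Z$ is central if and only if $\ad_{\mathcal Z}|_{\h}=0$, which by \eqref{adZ} holds if and only if $2(\psi^2+\delta\psi)=0$, i.e. $\psi^2+\delta\psi=0$. This completes the equivalence.

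In truth there is no genuine obstacle here: the statement is an immediate formal consequence of Proposition \ref{adZ1} and equation \eqref{adZ}, both proved earlier. The only point requiring a moment's care is to make explicit that centrality is a condition on \emph{all} of $\g$, and that the vertical direction is automatically satisfied, so that the entire content is captured by the horizontal action; once this is noted, the factor $2$ is irrelevant and the characterization follows at once.
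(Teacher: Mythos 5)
Your argument is correct and is exactly the paper's own: the authors deduce the corollary immediately from equation \eqref{adZ} (which gives $\operatorname{ad}_{\mathcal Z}|_{\mathfrak h}=2(\psi^2+\delta\psi)$) together with Proposition \ref{adZ1} (which handles the vertical directions). Your only addition is to spell out explicitly that centrality splits over $\g=\h\oplus\v$ and that the vertical part is automatic, which the paper leaves implicit.
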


\
	
\subsection{The sphere of abelian almost contact structures}	
	
An almost $3$-contact Lie algebra $(\g,\varphi_i,\xi_i,\eta_i)$ carries a sphere of almost contact structures
\[
\Sigma_{\mathfrak g}=\, \{(\varphi_a,\xi_a,\eta_a) \ |\ a\in S^2 \},
\]
where, for every $a=(a_1,a_2,a_3)\in S^2$,
\begin{equation}\label{structure_sphere}
\varphi_a\, :=\, a_1\varphi_1+a_2\varphi_2+ a_3\varphi_3,\quad
\xi_a \, :=\, a_1\xi_1+a_2\xi_2+a_3\xi_3,\quad \eta_a \, :=\, a_1\eta_1+a_2\eta_2+a_3\eta_3.
\end{equation}
In particular equations \eqref{3-sasaki} are equivalent to
\begin{equation*}
\begin{split}
\varphi_a\circ\varphi_b-\eta_b\otimes\xi_a=\varphi_{a\times b}-(a\cdot b)\,I,\\
\varphi_a\xi_b=\xi_{a\times b}, \quad
\eta_a\circ\varphi_b=\eta_{a\times b},\quad
\end{split}
\end{equation*}
for every $a,b\in S^2$. In general, we shall assume the same definition \eqref{structure_sphere} for every $a\in{\mathbb R}^3$.

\

It is our purpose to prove that when the almost 3-contact structure is abelian then each structure in the associated sphere is abelian. We will use the vectors $\zeta_k=[\xi_i,\xi_j]$ and $\mathcal Z=\varphi_i \zeta_i$  defined previously.

Let us denote for each $a=(a_1,a_2,a_3)\in{\mathbb R}^3$, $\zeta_a:=a_1\zeta_1+a_2\zeta_2+a_3\zeta_3$ so that, for every $a,b\in{\mathbb R}^3$, we have
\begin{equation}\label{general_commutator}
[\xi_a,\xi_b]=\zeta_{a\times b}.
\end{equation}
A direct computation using \textsc{1.} in Lemma \ref{lemma1} and the definition of $\mathcal Z$ shows that for every $a,b\in{\mathbb R}^3$,
\begin{equation}\label{phi_zeta}
\varphi_a\zeta_b=(a\cdot b){\mathcal Z}+\zeta_{a\times b}.
\end{equation}
	
We show next that for every $a,b\in{\mathbb R}^3$,
	\begin{equation}\label{ad_a-phi_b}
	(\mathrm{ad}_{\xi_a}\circ\varphi_b)|_{\mathfrak h}=(a\cdot b)\psi+\mathrm{ad}_{\xi_{a\times b}}|_{\mathfrak h}=(\varphi_a\circ\mathrm{ad}_{\xi_b})|_{\mathfrak h},
	\end{equation}
where $\psi$ is the endomorphism of $\h$ defined in \eqref{psi}. Indeed,
	\begin{align*}
	(\mathrm{ad}_{\xi_a}\circ\varphi_b)|_{\mathfrak h}&=\sum_{i=1}^3a_ib_i (\mathrm{ad}_{\xi_i}\circ\varphi_i)|_{\mathfrak h}+\sum_{1\leq i<j\leq 3}\left(a_ib_j(\mathrm{ad}_{\xi_i}\circ\varphi_j)|_{\mathfrak h}
	+a_jb_i(\mathrm{ad}_{\xi_j}\circ\varphi_i)|_{\mathfrak h}\right)\\
	&=(a\cdot b)\psi+\underset{i,j,k}{\mathfrak S}(a_ib_j-a_jb_i)\,\mathrm{ad}_{\xi_k}|_{\mathfrak h}\\
	&=(a\cdot b)\psi+\mathrm{ad}_{\xi_{a\times b}}|_{\mathfrak h},
	\end{align*}
where $\underset{i,j,k}{\mathfrak S}$ denotes the sum over all even permutations of $(1,2,3)$, and we used the fact that for every even permutation $(i,j,k)$ of $(1,2,3)$,
	\begin{equation}\label{ad-phi1}
	(\mathrm{ad}_{\xi_i}\circ\varphi_j)|_{\mathfrak h}=\mathrm{ad}_{\xi_k}|_{\mathfrak h}=-(\mathrm{ad}_{\xi_j}\circ\varphi_i)|_{\mathfrak h},
	\end{equation}
as proved in 2. of Lemma \ref{lemma}. Analogously, using
	\begin{equation}\label{ad-phi2}
	(\varphi_i\circ\mathrm{ad}_{\xi_j})|_{\mathfrak h}=\mathrm{ad}_{\xi_k}|_{\mathfrak h}=-(\varphi_j\circ\mathrm{ad}_{\xi_i})|_{\mathfrak h},
	\end{equation}
one shows the second equality in \eqref{ad_a-phi_b}.

\
	
\begin{proposition}
	Let $({\mathfrak g},\varphi_i,\xi_i,\eta_i)$ be an almost $3$-contact Lie algebra with abelian structure. Then every structure $(\varphi,\xi,\eta)$ in the sphere $\Sigma_{\mathfrak g}$ is abelian.
\end{proposition}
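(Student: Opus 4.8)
The plan is to verify the two defining conditions of an abelian almost contact structure from Definition~\ref{definition-abelian} for an arbitrary element $(\varphi_a,\xi_a,\eta_a)$ of the sphere $\Sigma_{\mathfrak g}$, with $a\in S^2$. Since we already know each $(\varphi_i,\xi_i,\eta_i)$ is an almost contact structure and these assemble into an almost $3$-contact structure, the triple $(\varphi_a,\xi_a,\eta_a)$ is automatically an almost contact structure; what remains is to check abelianness, namely condition \textsc{1.} that $\mathrm{ad}_{\xi_a}$ commutes with $\varphi_a$, and condition \textsc{2.} that $[\varphi_a X,\varphi_a Y]=[X,Y]$ for all $X,Y$ in the horizontal space $\mathfrak h_a:=\operatorname{Ker}\eta_a$.

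For condition \textsc{1.}, I would invoke equation \eqref{ad_a-phi_b} with $b=a$. Since $a\in S^2$ gives $a\cdot a=1$ and $a\times a=0$, this immediately yields
\[
(\mathrm{ad}_{\xi_a}\circ\varphi_a)|_{\mathfrak h}=\psi=(\varphi_a\circ\mathrm{ad}_{\xi_a})|_{\mathfrak h},
\]
so $\mathrm{ad}_{\xi_a}$ and $\varphi_a$ commute on $\mathfrak h$. To promote this to the full Lie algebra $\mathfrak g=\mathfrak h\oplus\mathfrak v$, I would separately check the commuting relation on vertical vectors: one computes $\mathrm{ad}_{\xi_a}(\varphi_a\xi_b)$ and $\varphi_a(\mathrm{ad}_{\xi_a}\xi_b)$ for $b\in S^2$ using $\varphi_a\xi_b=\xi_{a\times b}$ together with \eqref{general_commutator} and \eqref{phi_zeta}, and confirm they agree. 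The slightly delicate point here is that $\mathfrak h_a=\operatorname{Ker}\eta_a$ is \emph{not} in general equal to the common horizontal space $\mathfrak h=\bigcap_i\operatorname{Ker}\eta_i$; however $\varphi_a$ maps $\mathfrak h$ into $\mathfrak h$ and $\xi_b\mapsto\xi_{a\times b}$ controls the vertical part, so a clean way to organize the verification is to treat the $\mathfrak h$-part and the $\mathfrak v$-part separately rather than working directly with $\mathfrak h_a$.

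For condition \textsc{2.}, the key is that $\varphi_a$ acts as a genuine complex structure on the horizontal space $\mathfrak h$ (where $\eta_b=0$ for all $b$, so the correction term $\eta_b\otimes\xi_a$ drops out), and I want $[\varphi_a X,\varphi_a Y]=[X,Y]$. Expanding $\varphi_a=\sum_i a_i\varphi_i$ bilinearly gives a sum of terms $a_i a_j[\varphi_i X,\varphi_j Y]$. Using part \textsc{4.} of Lemma~\ref{lemma}, namely $[\varphi_i X,\varphi_j Y]=[\varphi_k X,Y]=-[\varphi_j X,\varphi_i Y]$ for even permutations, together with condition \textsc{2.} of each individual abelian structure $[\varphi_i X,\varphi_i Y]=[X,Y]$, the off-diagonal ($i\neq j$) terms should cancel in antisymmetric pairs while the diagonal terms sum to $(\sum_i a_i^2)[X,Y]=[X,Y]$ because $a\in S^2$. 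I expect the cancellation of the cross terms to be the main computational heart of the argument, and the place where Lemma~\ref{lemma} part \textsc{4.} is essential: each pair $a_ia_j[\varphi_i X,\varphi_j Y]+a_ja_i[\varphi_j X,\varphi_i Y]$ vanishes by the antisymmetry $[\varphi_i X,\varphi_j Y]=-[\varphi_j X,\varphi_i Y]$. Once both conditions are verified on the appropriate pieces, Proposition~\ref{h10} then guarantees that $(\varphi_a,\xi_a,\eta_a)$ is abelian.

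The main obstacle I anticipate is bookkeeping around the horizontal subspace: one must be careful that condition \textsc{2.} of Definition~\ref{definition-abelian} is required only for $X,Y\in\mathfrak h_a=\operatorname{Ker}\eta_a$, and one should check whether it suffices to verify the bracket identity on the common horizontal space $\mathfrak h$ or whether the vertical contributions to $\mathfrak h_a$ genuinely need handling. I would argue that since $\mathfrak g=\mathfrak h\oplus\mathfrak v$ and $\mathfrak h_a$ contains $\mathfrak h$ together with a two-dimensional slice of $\mathfrak v$, the cleanest route is to reduce everything to computations on $\mathfrak h$ and on $\mathfrak v$, exploiting that all the structural lemmas (Lemma~\ref{lemma1}, Lemma~\ref{lemma}, and equations \eqref{general_commutator}--\eqref{ad-phi2}) are already phrased in terms of the $\mathfrak h$/$\mathfrak v$ splitting. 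This makes the unit-sphere conditions $a\cdot a=1$ and $a\times a=0$ do all the real work.
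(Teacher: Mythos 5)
Your overall strategy coincides with the paper's: condition \textsc{1.} is checked exactly as you describe, via \eqref{ad_a-phi_b} with $b=a$ on $\mathfrak h$ and via \eqref{general_commutator} and \eqref{phi_zeta} on $\mathfrak v$, and your treatment of condition \textsc{2.} for $X,Y\in\mathfrak h$ (bilinear expansion, diagonal terms summing to $(a\cdot a)[X,Y]$, cross terms cancelling by the antisymmetry in part \textsc{4.} of Lemma~\ref{lemma}) is precisely the paper's computation. The one substantive omission is that you correctly identify that $\operatorname{Ker}\eta_a=\mathfrak h\oplus\mathfrak k$ with $\mathfrak k=\{\xi_b\mid a\cdot b=0\}$ a two-dimensional slice of $\mathfrak v$, and you note that this ``genuinely needs handling,'' but you never carry out the verification of $[\varphi_aX,\varphi_aY]=[X,Y]$ when one or both arguments lie in $\mathfrak k$. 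These cases are not vacuous and do not follow from the horizontal computation: for $X\in\mathfrak h$ and $\xi_b\in\mathfrak k$ one needs
\[
[\varphi_a\xi_b,\varphi_aX]=[\xi_{a\times b},\varphi_aX]=[\xi_{(a\times b)\times a},X]=[\xi_b,X],
\]
which uses \eqref{ad_a-phi_b} together with the identity $(a\times b)\times a=b$ valid because $a\in S^2$ and $a\cdot b=0$; and for $\xi_b,\xi_{b'}\in\mathfrak k$ one needs
\[
[\varphi_a\xi_b,\varphi_a\xi_{b'}]=[\xi_{a\times b},\xi_{a\times b'}]=\zeta_{(a\times b)\times(a\times b')}=\zeta_{b\times b'}=[\xi_b,\xi_{b'}],
\]
which rests on $(a\times b)\times(a\times b')=(a\cdot(b\times b'))a=b\times b'$. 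So the ``real work'' is not confined to the horizontal cross-term cancellation; the vertical cases require their own cross-product identities, and leaving them as an acknowledged but unexecuted step leaves the proof incomplete. A minor further point: invoking Proposition~\ref{h10} at the end is unnecessary, since once conditions \textsc{1.} and \textsc{2.} of Definition~\ref{definition-abelian} are verified the structure is abelian by definition.
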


\begin{proof}
Let $(\varphi_a,\xi_a,\eta_a)$, $a=(a_1,a_2,a_3)\in S^2$, be a structure in the sphere $\Sigma_{\mathfrak g}$ defined as in \eqref{structure_sphere}. Notice that
	\[\operatorname{Ker}\eta_a={\mathfrak h}\oplus{\mathfrak k},\qquad {\mathfrak k}=\{\xi_b\,|\, b\in {\mathbb R}^3,\, a\cdot b=0\}.\]
		
First we show that
		\begin{equation}\label{abelian_sphere_1}
		\mathrm{ad}_{\xi_a}\circ\varphi_a=\varphi_a\circ\mathrm{ad}_{\xi_a}.
		\end{equation}
From \eqref{ad_a-phi_b}, we have
		\[
		(\mathrm{ad}_{\xi_a}\circ\varphi_a)|_{\mathfrak h}=\psi=
		(\varphi_a\circ\mathrm{ad}_{\xi_a})|_{\mathfrak h}.\]
Now, let us consider $\xi_b\in{\mathfrak v}$, with $b\in{\mathbb R}^3$. Applying \eqref{general_commutator} and \eqref{phi_zeta}, we have
		\begin{align*}
		\mathrm{ad}_{\xi_a}(\varphi_a\xi_b)&=[\xi_a,\xi_{a\times b}]=\zeta_{a\times(a\times b)},\\
		\varphi_a(\mathrm{ad}_{\xi_a}\xi_b)&=\varphi_a\zeta_{a\times b}=(a\cdot(a\times b)){\mathcal Z}+\zeta_{a\times(a\times b)}=\zeta_{a\times(a\times b)},
		\end{align*}
thus completing the proof of \eqref{abelian_sphere_1}.
		
We show now that
		\begin{equation}\label{abelian_sphere_2}
		[\varphi_a X,\varphi_a Y]=[X,Y]
		\end{equation}
for every $X,Y\in\operatorname{Ker}\eta_a$. For every $X,Y\in{\mathfrak h}$, we have
		\begin{align*}
		[\varphi_a X,\varphi_a Y]&=\sum_{i=1}^3a_i^2[\varphi_iX,\varphi_iY]+\sum_{1\leq i<j\leq 3}a_ia_j([\varphi_iX,\varphi_jY]+[\varphi_jX,\varphi_iX])\\
		&=(a\cdot a)[X,Y]=[X,Y],
		\end{align*}
where we applied 4. of Lemma \ref{lemma} and the fact that each structure $(\varphi_i,\xi_i,\eta_i)$ is abelian. Now, let us take $X\in{\mathfrak h}$ and $\xi_b\in{\mathfrak k}$, $b\in{\mathbb R}^3$, $a\cdot b=0$. Being $a\in S^2$ and $a\cdot b=0$, we have $(a\times b)\times a=b$. Therefore, applying equation \eqref{ad_a-phi_b}, we get
		\begin{align*}
		[\varphi_a\xi_b,\varphi_a X]&=[\xi_{a\times b}, \varphi_a X]=[\xi_{(a\times b)\times a},X]=[\xi_b,X].
		\end{align*}
		Finally, let us consider $\xi_b,\xi_{b'}\in{\mathfrak k}$, with $b, b'\in{\mathbb R}^3$ such that $a\cdot b=a\cdot b'=0$. Then, being $a\in S^2$,
		\[(a\times b)\times(a\times b')=(a\cdot(b\times b'))a=b\times b',\]
		and therefore,
		\[[\varphi_a\xi_b,\varphi_a\xi_{b'}]=[\xi_{a\times b},\xi_{a\times b'}]=\zeta_{(a\times b)\times(a\times b')}=\zeta_{b\times b'}=[\xi_b,\xi_{b'}],\]
		which completes the proof of \eqref{abelian_sphere_2}.
	\end{proof}
	
\
	
\subsection{Case $\v\subset \z(\g)$}

We analyze next the particular case when the vertical subspace $\mathfrak v$ is contained in the center $\mathfrak z(\mathfrak g)$. In particular, $\mathfrak v$ is an abelian subalgebra of $\mathfrak g$.
	
\begin{lemma}
		If $\xi_i$ is a central element of $\mathfrak g$ for some $i=1,2,3$, then $\xi_i$ is central for all $i=1,2,3$.
\end{lemma}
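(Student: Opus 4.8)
The plan is to exploit the endomorphism $\psi$ and the vector $\mathcal Z$ together with their structural consequences established above. Assume without loss of generality that $\xi_1$ is central, so $\ad_{\xi_1}=0$. First I would observe that the very definition \eqref{psi} of $\psi$, which may be computed through any index, immediately gives $\psi=(\ad_{\xi_1}\circ\varphi_1)|_{\h}=0$.

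The key step is then to propagate this vanishing to the actions of $\xi_2$ and $\xi_3$ on the horizontal subspace. By Lemma \ref{ker-im} we have $\operatorname{Ker}\ad_{\xi_i}|_{\h}=\operatorname{Ker}\psi$ for every $i$; since $\psi=0$ this kernel is all of $\h$, whence $[\xi_i,X]=0$ for every $X\in\h$ and every $i=1,2,3$. Thus all three Reeb vectors act trivially on $\h$, and it only remains to control their brackets among themselves, i.e. the vectors $\zeta_k=[\xi_i,\xi_j]$.

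For the vertical part I would argue as follows. Centrality of $\xi_1$ forces $\zeta_2=[\xi_3,\xi_1]=0$ and $\zeta_3=[\xi_1,\xi_2]=0$. Since $\mathcal Z=\varphi_i\zeta_i$ for every $i$, taking $i=2$ gives $\mathcal Z=\varphi_2\zeta_2=0$. Now formula \eqref{zeta} reads $\zeta_i=-\varphi_i\mathcal Z+2\delta\xi_i=2\delta\xi_i$; evaluating at $i=2$ and using $\zeta_2=0$ yields $2\delta\xi_2=0$, hence $\delta=0$. Consequently $\zeta_1=2\delta\xi_1=0$ as well, so $[\xi_i,\xi_j]=0$ for all $i,j$.

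Combining the two conclusions, $\ad_{\xi_i}$ annihilates both $\h$ and $\v$, hence all of $\g=\h\oplus\v$, so each $\xi_i$ is central. I do not expect any serious obstacle here: the only point requiring a little care is the legitimacy of evaluating the single endomorphism $\psi$ and the single vector $\mathcal Z$ through different indices, but this is exactly the index-independence built into \eqref{psi} and into $\mathcal Z=\varphi_i\zeta_i$, which is precisely what makes the argument go through so cleanly.
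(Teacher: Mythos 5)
Your proof is correct and follows essentially the same route as the paper: both arguments kill the horizontal brackets via the index-independence of $\psi$ (the paper invokes Lemma \ref{lemma} directly, you route it through Lemma \ref{ker-im}, which is derived from it) and kill the vertical brackets by combining $\zeta_j=\zeta_k=0$ with the index-independence of $\mathcal Z$ and equation \eqref{zeta} to force $\mathcal Z=0$ and $\delta=0$. The index-independence you flag as the delicate point is indeed exactly what Lemma \ref{lemma1} and Lemma \ref{lemma} guarantee, so the argument is sound.
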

	
\begin{proof}
	Let $(i,j,k)$ be an even permutation of $(1,2,3)$, with $\xi_i\in \mathfrak z(\mathfrak g)$. Then $\zeta_k=[\xi_i,\xi_j]=0$ and it follows from \eqref{zeta} that $\mathcal Z=0$ and $\delta=0$, so that $\mathfrak v$ is an abelian subalgebra of $\mathfrak g$.
		
	For $X\in\mathfrak h$, it follows from Lemma \ref{lemma} that
	\begin{align*}
	[\xi_k,X] & =[\xi_i,\varphi_j X]=0,\\
	[\xi_j,X] & =-[\xi_i,\varphi_k X]=0.
	\end{align*}
	Thus, both $\xi_j$ and $\xi_k$ are central elements of $\mathfrak z(\mathfrak g)$.
\end{proof}
	
\
	
Assume now that $\mathfrak v \subset \z(\g)$. For $X,Y\in\mathfrak h$ we have a decomposition
\begin{equation}\label{decomposition2}
	[X,Y]=\sigma_1(X,Y)\xi_1+\sigma_2(X,Y)\xi_2+\sigma_3(X,Y)\xi_3+[X,Y]_{\h},
\end{equation}
where $\sigma_i(X,Y)\in\R$ for all $i$ and $[X,Y]_\h\in\h$. Arguing as in the proof of Proposition \ref{proposition-extension} we obtain that $[\cdot,\cdot]_\h$ is a Lie bracket on $\h$ and each $\sigma_i$ is a $2$-cocycle of $\h$ with respect to this Lie algebra structure. If we define
\begin{equation}\label{theta1}
\theta(X,Y):=\sigma_1(X,Y)\xi_1+\sigma_2(X,Y)\xi_2+\sigma_3(X,Y)\xi_3\in\v,
\end{equation}
then clearly $\theta\in \alt^2\h^*\otimes \v$ and, furthermore, $\theta$ is a $\mathfrak v$-valued $2$-cocycle on $\h$. Therefore $\g$ can be considered as a central extension of $\h$ by $\theta$.
	
Next, setting $J_i:=\varphi_i|_{\mathfrak h}$, it follows from \eqref{3-sasaki} that $\{J_1,J_2,J_3\}$ defines an almost hypercomplex structure on $\mathfrak h$. Furthermore, taking into account that $[\varphi_i X,\varphi_i Y]=[X,Y]$ for all $X,Y\in\mathfrak h$ and $i=1,2,3$, and using \eqref{decomposition2}, we arrive at
\[ [J_iX,J_iY]_\h=[X,Y]_\h, \quad \sigma_j(J_iX,J_iY)=\sigma_j(X,Y) \]
for all $i,j=1,2,3$. Thus $\{J_1,J_2,J_3\}$ is in fact an abelian hypercomplex structure on $\h$ and $\sigma_j$ is a $J_i$-invariant
$2$-cocycle of $\h$ for all $i,j$. Equivalently, $\theta$ is a $J_i$-invariant $\v$-valued $2$-cocycle of $\h$ for all $i$.
	
\
	
Summarizing, we have proved the first half of the following result:
	
\begin{proposition}\label{proposition-3extension}
	Let $(\g,\varphi_i,\xi_i,\eta_i)$ be an almost $3$-contact Lie algebra with abelian structure such that $\v \subset \z(\mathfrak g)$. Then the horizontal subspace $\mathfrak h$ admits a Lie algebra structure and an abelian hypercomplex structure $\{J_1,J_2,J_3\}$ such that $\mathfrak g$ is the central extension of the Lie algebra $\mathfrak h$ by a $J_i$-invariant $\mathfrak v$-valued $2$-cocycle $\theta$ of $\mathfrak h$.
		
	Conversely, if $\mathfrak h$ is a Lie algebra equipped with an abelian hypercomplex structure $\{J_1,J_2,J_3\}$ and $\theta$ is a $J_i$-invariant $\R^3$-valued $2$-cocycle on $\h$, then the central extension  $\g=\R^3\oplus_{\theta} \h$ carries a natural abelian almost $3$-contact structure $(\varphi_i,\xi_i,\eta_i)$.
\end{proposition}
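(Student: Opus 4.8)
The plan is as follows. The forward implication has already been established in the discussion preceding the statement, so it remains to treat the converse: starting from a Lie algebra $\h$ equipped with an abelian hypercomplex structure $\{J_1,J_2,J_3\}$ and a $J_i$-invariant $\R^3$-valued $2$-cocycle $\theta$, I would build the structure tensors on $\g=\R^3\oplus_\theta\h$ explicitly and verify all the required identities. Write $\v=\R^3$ with basis $\{\xi_1,\xi_2,\xi_3\}$, and recall that in $\g$ the subspace $\v$ is central and that $[X,Y]=\theta(X,Y)+[X,Y]_\h$ for $X,Y\in\h$, as in \eqref{decomposition2}.

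First I would define the structure tensors. Set $\eta_i\in\g^*$ by $\eta_i(\xi_j)=\delta_{ij}$ and $\eta_i|_\h=0$, and define $\varphi_i\in\operatorname{End}(\g)$ by
\[\varphi_i|_\h:=J_i,\qquad \varphi_i\xi_i:=0,\qquad \varphi_i\xi_j:=\xi_k,\qquad \varphi_i\xi_k:=-\xi_j,\]
for every even permutation $(i,j,k)$ of $(1,2,3)$; thus $\varphi_i$ acts on $\v$ as multiplication by the imaginary quaternion corresponding to $\xi_i$.

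Next I would check the purely algebraic axioms. On $\h$ one has $\varphi_i^2=J_i^2=-\I$ while $\eta_i\otimes\xi_i$ vanishes, and on $\v$ a direct computation gives $\varphi_i^2\xi_i=0=(-\I+\eta_i\otimes\xi_i)\xi_i$ and $\varphi_i^2\xi_j=-\xi_j$, so that $\varphi_i^2=-\I+\eta_i\otimes\xi_i$; together with $\eta_i(\xi_i)=1$ this shows each $(\varphi_i,\xi_i,\eta_i)$ is an almost contact structure. The compatibility relations \eqref{3-sasaki} split into their $\h$-parts, which reduce to the quaternion identities $J_iJ_j=J_k=-J_jJ_i$ (the terms $\eta_j\otimes\xi_i$ being zero on $\h$), and their $\v$-parts, which are exactly the relations satisfied by the imaginary quaternions; both are immediate from the definitions, so $(\varphi_i,\xi_i,\eta_i)$ is an almost $3$-contact structure.

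Finally, and this is the only step requiring attention, I would verify that each structure is abelian in the sense of Definition \ref{definition-abelian}. Since $\v\subset\z(\g)$ we have $\ad_{\xi_i}=0$, so condition \textsc{1.} holds trivially. For condition \textsc{2.} the subtlety is that the relevant horizontal space here is $\operatorname{Ker}\eta_i=\h\oplus\langle\xi_j,\xi_k\rangle$, strictly larger than $\h$; I would therefore split the identity $[\varphi_iU,\varphi_iV]=[U,V]$ according to where $U,V$ lie. For $U,V\in\h$, using that $J_i$ is abelian on $\h$ and that $\theta$ is $J_i$-invariant,
\[[\varphi_iU,\varphi_iV]=\theta(J_iU,J_iV)+[J_iU,J_iV]_\h=\theta(U,V)+[U,V]_\h=[U,V].\]
If at least one of $U,V$ lies in $\langle\xi_j,\xi_k\rangle$, then both $[U,V]$ and $[\varphi_iU,\varphi_iV]$ vanish, since $\varphi_i$ preserves $\v$ and $\v$ is central. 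Hence condition \textsc{2.} holds on all of $\operatorname{Ker}\eta_i$, each $(\varphi_i,\xi_i,\eta_i)$ is abelian, and the constructed triple is an abelian almost $3$-contact structure. The only genuine content, as opposed to bookkeeping, is the interplay of the $J_i$-abelian condition with the $J_i$-invariance of $\theta$ in the horizontal case, the centrality of $\v$ disposing of all remaining cases.
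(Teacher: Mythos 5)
Your proof is correct and follows the same route as the paper, which for the converse simply asserts that the central extension ``clearly'' carries the natural abelian almost $3$-contact structure; you have supplied the verification the authors leave implicit, including the one point that genuinely deserves attention, namely that $\operatorname{Ker}\eta_i$ contains $\langle\xi_j,\xi_k\rangle$ in addition to $\h$ and that condition \textsc{2.} of Definition \ref{definition-abelian} must be checked there as well. Nothing further is needed.
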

	
\begin{proof}
	We only have to prove the second statement. Let $\h$ be a Lie algebra equipped with an abelian hypercomplex structure $\{J_1,J_2,J_3\}$ and let $\theta$ be a $J_i$-invariant $\R^3$-valued 2-cocycle on $\h$, i.e. $\theta\in\alt^2\h^*\otimes \R^3$ satisfies $d\theta=0$ and $\theta(J_iX,J_iY)=\theta(X,Y)$ for any $X,Y\in\h$ and any $i=1,2,3$. More explicitly, there exist a basis $\{\xi_1,\xi_2,\xi_3\}$ of $\R^3$ and $J_i$-invariant 2-cocycles $\sigma_1,\sigma_2,\sigma_3\in\alt^2\h^*$ such that
	\begin{equation}\label{theta}
	\theta(X,Y):=\sigma_1(X,Y)\xi_1+\sigma_2(X,Y)\xi_2+\sigma_3(X,Y)\xi_3\in\R^3 ,
	\end{equation}
	for $X,Y\in\h$. It is clear that the central extension of $\h$ by $\theta$ carries a natural abelian almost 3-contact structure $(\varphi_i,\xi_i,\eta_i)$.
\end{proof}	

\

For instance, we may consider $\sigma_i=df_i$ for some $f_i\in\mathfrak{h}^*$, $i=1,2,3$. Then $\theta$ defined as in \eqref{theta} is $J_i$-invariant and $d\theta=0$.
However, in this case, the corresponding central extension of $\mathfrak h$ by $\theta$ is isomorphic to $\mathbb{R}^3\times \mathfrak h$. Indeed, if $\mathfrak g=\mathbb R^3 \oplus_\theta \mathfrak h$ denotes the central extension of $\mathfrak h$ by $\theta$ then $T:\mathfrak g\to \mathbb R^3 \times \mathfrak h$ defined by
	\[ T(\xi+X)=\xi+f_1(X)\xi_1+f_2(X)\xi_2+f_3(X)\xi_3+X, \]
for $\xi\in\mathbb R^3$ and $X\in\mathfrak h$, is a Lie algebra isomorphism.
In particular, on the subalgebra $\mathfrak{k}=T^{-1}({\mathfrak h})=\{-f_1(X)\xi_1-f_2(X)\xi_2-f_3(X)\xi_3+X\,|\, X\in{\mathfrak h}\}$, one can define the abelian complex structures $J'_i$, $i=1,2,3$, by $J_i'(-\sum_{r}f_r(X)\xi_r+X)=-\sum_rf_r(J_iX)\xi+J_iX$, which satisfies $T(J'_iZ)=J_i(TZ)$ for every $Z\in{\mathfrak k}$.

\
		
\begin{example}\label{affC-3}
 Recall from Proposition \ref{hypercomplex-4d} that $\mathfrak{aff}(\mathbb C)$  is the only non abelian 4-dimensional Lie algebra that admits an abelian hypercomplex structure. Let $\{J_1',J_2',J_3'\}$ be any abelian hypercomplex structure on $\aff(\C)$. According to the discussion following Proposition \ref{hypercomplex-4d}, $\{J_1',J_2',J_3'\}$ gives rise to a sphere which coincides with the sphere determined by the abelian hypercomplex structure $\{J_1,J_2,J_3\}$ given in \eqref{aff-C}.  In particular, a $2$-cocycle $\sigma$ of $\aff(\C)$ is invariant by $\{J_i'\}$ if and only if it is invariant by $\{J_i\}$.
Therefore, using \eqref{d-affC}, a straightforward computation shows that such a $2$-cocycle satisfies $\sigma=x(e^{13}-e^{24})+y(e^{14}+e^{23})$ for some $x,y\in\R$. Hence, $\sigma$ is exact and thus we obtain that any central extension of $\aff(\C)$ by a $J_i'$-invariant $\R^3$-valued  2-cocycle will be isomorphic to $\R^3\times \aff(\mathbb C)$, for any abelian hypercomplex structure $\{J_i'\}$ on $\aff(\C)$.
\end{example}
		
\smallskip
	
We provide now three more examples of almost $3$-contact Lie algebras with abelian structure such that ${\mathfrak v}\subset{\mathfrak z}({\mathfrak g})$. We denote by $\mathfrak g$ a $(4n+3)$-dimensional vector space spanned by vectors $\xi_1,\xi_2,\xi_3, \tau_r,\tau_{n+r},\tau_{2n+r},\tau_{3n+r}$, $r=1,\ldots,n$. Let $\{\eta_i, \theta_l\}$, $i=1,2,3$, $l=1,\dots,4n$, be  the dual basis of  $\{\xi_i, \tau_l\}$, and let $\varphi_i$ be the endomorphism of $\mathfrak g$ defined by
\begin{equation*}
	\varphi_i=\eta_j\otimes\xi_k-\eta_k\otimes\xi_j+\sum_{r=1}^n[\theta_r\otimes\tau_{in+r}
	-\theta_{in+r}\otimes\tau_{r}
	+\theta_{jn+r}\otimes\tau_{kn+r}-\theta_{kn+r}\otimes\tau_{jn+r}]
\end{equation*}
where $(i,j,k)$ is an even permutation of $(1,2,3)$. In particular, one has
\begin{equation}\label{phi-q}
	\varphi_i\tau_r=\tau_{in+r},\quad \varphi_i\tau_{in+r}=-\tau_{r},\quad \varphi_i\tau_{jn+r}=\tau_{kn+r},\quad \varphi_i\tau_{kn+r}=-\tau_{jn+r}.
\end{equation}
In the next examples we consider three different Lie algebra structures on $\g$, so that $(\g,\varphi_i,\xi_i,\eta_i)$ is an almost $3$-contact Lie algebra.
	
\begin{example}\label{ex1}
	Let $\mathfrak g$ be the Lie algebra whose non-vanishing commutators are:
		\begin{align*}
		&[\tau_r,\tau_{2n+r}]=\lambda\xi_1& \quad&[\tau_r,\tau_{n+r}]
		=\lambda\xi_2&\quad &[\tau_r,\tau_{3n+r}]=\lambda\xi_3\\
		&[\tau_{n+r},\tau_{3n+r}]=\lambda\xi_1&\quad &[\tau_{3n+r},\tau_{2n+r}]
		=\lambda\xi_2&\quad &[\tau_{2n+r},\tau_{n+r}]=\lambda\xi_3,
		\end{align*}
	$\lambda\ne 0$ being a real number. This is the Lie algebra $\h^\mathbb{H}_n$ of the quaternionic Heisenberg group $H^\mathbb{H}_n$. By a direct computation one can see that $(\varphi_i,\xi_i,\eta_i)$ is an abelian structure. In fact $\g$ is the central extension of the abelian Lie algebra $\R^{4n}$ endowed with the hypercomplex structure $J_i:=\varphi_i|_\h$, $i=1,2,3$, with $\varphi_i$ defined as in \eqref{phi-q}, by the $J_i$-invariant $\mathfrak v$-valued $2$-cocycle
\begin{align*}
\theta&=\lambda\sum_{r=1}^n\{(\theta_r\wedge\theta_{2n+r}-\theta_{3n+r}\wedge\theta_{n+r})\otimes\xi_1
+(\theta_r\wedge\theta_{n+r}-\theta_{2n+r}\wedge\theta_{3n+r})\otimes\xi_2\\&
\quad+(\theta_r\wedge\theta_{3n+r}-\theta_{n+r}\wedge\theta_{2n+r})\otimes\xi_3\}.
\end{align*}
\end{example}
	
\begin{example}\label{ex2}
	Let $\mathfrak g$ be the Lie algebra with non-vanishing commutators
		\[ [\tau_r,\tau_{2n+r}]=[\tau_{n+r},\tau_{3n+r}]=\lambda\xi_1, \qquad [\tau_r,\tau_{n+r}]=[\tau_{3n+r},\tau_{2n+r}]
		=\lambda\xi_2,\]
	$\lambda\ne 0$ being a real number. This is the Lie algebra of $H_{n}^{\mathbb C}\times \mathbb R$, where $H_{n}^{\mathbb C}$ is the complex Heisenberg group of real dimension $4n+2$, with Lie algebra $\h^\mathbb{C}_{n}$. Then $(\varphi_i,\xi_i,\eta_i)$ is an abelian structure. In particular $\g$ is the central extension of $\R^{4n}$ by the $\mathfrak v$-valued $2$-cocycle
\[ \theta=\lambda\sum_{r=1}^n\{(\theta_r\wedge\theta_{2n+r}-\theta_{3n+r}\wedge\theta_{n+r})\otimes\xi_1
+(\theta_r\wedge\theta_{n+r}-\theta_{2n+r}\wedge\theta_{3n+r})\otimes\xi_2\}.\]
\end{example}
	
\begin{example}\label{ex3}
	Let $\mathfrak g$ be the Lie algebra with non-vanishing commutators
		\[[\tau_r,\tau_{2n+r}]=[\tau_{n+r},\tau_{3n+r}]=\lambda\xi_1\]
	$\lambda\ne 0$ being a real number. This is the Lie algebra of $H_{2n}^{\mathbb R}\times {\mathbb R}^2$, where $H_{2n}^{\mathbb R}$ is the real Heisenberg group of real dimension $4n+1$, with Lie algebra $\h_{2n}^\mathbb{R}$ defined as in Example \ref{Heisenberg}. Then $(\varphi_i,\xi_i,\eta_i)$ is an abelian structure. In this case $\g$ is the central extension of $\R^{4n}$ by the $\mathfrak v$-valued $2$-cocycle
\[ \theta=\lambda\sum_{r=1}^n(\theta_r\wedge\theta_{2n+r}-\theta_{3n+r}\wedge\theta_{n+r})\otimes\xi_1.\]
\end{example}

We will consider again all the structures belonging to Examples \ref{ex1}, \ref{ex2}, \ref{ex3}, as examples of \emph{parallel canonical} structures (see Remark \ref{different}).
	
\

\section{Special cases for the rank of $\psi$ and classification in dimension 7}

Let us recall that the endomorphism $\psi$ and the vector $\mathcal Z$ are related by \eqref{adZ}. In this section we will analyze particular cases, namely, when $\psi$ is invertible and when $\psi=0$. These are the only possibilities in dimension $7$, in which case we obtain a classification of almost $3$-contact Lie algebras with abelian structure.

\

\subsection{Maximum and minimum rank for $\psi$}
Firstly, we consider the case when $\psi$ is an invertible endomorphism of $\h$.

\begin{proposition}\label{psi1}
	Let $({\mathfrak g},\varphi_i,\xi_i,\eta_i)$ be an almost $3$-contact Lie algebra with abelian structure such that the endomorphism $\psi$ is invertible. Then the vertical subspace $\v$ is a subalgebra isomorphic to $\mathfrak{so}(3)$, $\h$ is an abelian ideal, and the non-zero brackets are given by
	\begin{equation}\label{beta-delta}
	[\xi_i,\xi_j]=2\delta \xi_k, \qquad [\xi_i,X]= \delta \varphi_i X,
	\end{equation}
	for any even permutation $(i,j,k)$ of $(1,2,3)$,  $X\in\mathfrak h$ and some $\delta \neq 0$. That is, $\g\cong \mathfrak{so}(3)\ltimes_\rho \R^{4n}$, where $\rho:\mathfrak{so}(3)\to \mathfrak{gl}(4n,\R)$ is a representation of $\mathfrak{so}(3)$ such that $\{ \delta^{-1}\rho(\xi_i):i=1,2,3\}$ is a hypercomplex structure on $\R^{4n}$.
\end{proposition}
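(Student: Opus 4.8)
The plan is to first determine the three invariants $\mathcal Z$, $\delta$ and $\psi$, then read off all the brackets, and finally identify the resulting Lie algebra as a semidirect product. Since $\psi$ is invertible and Proposition \ref{adZ1} gives $\psi(\mathcal Z)=0$, I immediately get $\mathcal Z=0$. Feeding this into \eqref{adZ} yields $0=\operatorname{ad}_{\mathcal Z}|_{\h}=2(\psi^2+\delta\psi)=2\psi(\psi+\delta I)$; since $\psi$ is invertible this forces $\psi=-\delta I$, and invertibility once more forces $\delta\neq0$.

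With $\mathcal Z=0$ and $\delta\neq0$ in hand, Proposition \ref{proposition-subalgebra} tells me that $\v$ is a subalgebra isomorphic to $\mathfrak{so}(3)$, and \eqref{zeta} reduces to $[\xi_i,\xi_j]=2\delta\xi_k$ for every even permutation $(i,j,k)$. For the mixed brackets I use the definition \eqref{psi} of $\psi$ as $(\varphi_i\circ\operatorname{ad}_{\xi_i})|_{\h}$: from $\varphi_i\circ\operatorname{ad}_{\xi_i}|_{\h}=-\delta I$ and $\varphi_i^2=-I$ on $\h$ (as $\eta_i$ vanishes there) I obtain $\operatorname{ad}_{\xi_i}|_{\h}=\delta\varphi_i$, i.e. $[\xi_i,X]=\delta\varphi_i X$ for $X\in\h$; here $[\xi_i,X]\in\h$ as guaranteed by Lemma \ref{lemma}.

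The crux of the argument is showing that $\h$ is abelian. For $X,Y\in\h$ I apply the Jacobi identity to the triple $(\xi_i,X,Y)$; using $[\xi_i,X]=\delta\varphi_i X$ it collapses to
\[
\delta\big([\varphi_i X,Y]+[X,\varphi_i Y]\big)=[\xi_i,[X,Y]].
\]
The left-hand side vanishes because condition \textsc{2.} of Definition \ref{definition-abelian} is equivalent to $[\varphi_i X,Y]+[X,\varphi_i Y]=0$ on $\h$, so $[\xi_i,[X,Y]]=0$ for all $i$. Writing $[X,Y]=[X,Y]_{\h}+\sum_j\sigma_j(X,Y)\xi_j$ with $[X,Y]_{\h}\in\h$ and projecting onto $\h$ and $\v$ separately: the horizontal component of $[\xi_i,[X,Y]]$ is $\delta\varphi_i[X,Y]_{\h}$, which forces $[X,Y]_{\h}=0$ since $\delta\neq0$ and $\varphi_i$ is invertible on $\h$; the vertical component, computed from $[\xi_i,\xi_j]=2\delta\xi_k$, forces $\sigma_j(X,Y)=0$ for the two indices distinct from $i$, and letting $i$ vary kills all $\sigma_j$. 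Hence $[X,Y]=0$, and together with $[\xi_i,X]=\delta\varphi_i X\in\h$ this shows that $\h$ is an abelian ideal.

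It then remains to assemble the semidirect product. Since $\v\cong\mathfrak{so}(3)$ is a subalgebra acting on the abelian ideal $\h\cong\R^{4n}$ by $\rho(\xi):=\operatorname{ad}_{\xi}|_{\h}$, we obtain $\g\cong\mathfrak{so}(3)\ltimes_\rho\R^{4n}$. Finally $\rho(\xi_i)=\delta\varphi_i|_{\h}$, so $\delta^{-1}\rho(\xi_i)=J_i:=\varphi_i|_{\h}$; restricting \eqref{3-sasaki} to $\h$ (where all $\eta_i$ vanish) gives $J_iJ_j=J_k=-J_jJ_i$, so $\{J_1,J_2,J_3\}$ is a hypercomplex structure, completing the proof. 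I expect the projection argument of the previous paragraph to be the only genuinely delicate point; everything else follows directly from the already-established lemmas.
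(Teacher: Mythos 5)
Your proposal is correct and follows essentially the same route as the paper's proof: $\mathcal Z=0$ from $\psi(\mathcal Z)=0$ and invertibility, then $\psi=-\delta I$ with $\delta\neq 0$ from \eqref{adZ}, the brackets from the definition of $\psi$, and abelianness of $\h$ via the Jacobi identity applied to $(\xi_i,X,Y)$ followed by projection onto $\h$ and $\v$. The only cosmetic difference is in the vertical component: you solve the linear system $2\delta(\sigma_j\xi_k-\sigma_k\xi_j)=0$ explicitly for varying $i$, while the paper invokes the triviality of the center of $\mathfrak{so}(3)$ — these are the same argument.
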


\begin{proof}
It follows from Corollary \ref{coro-invertible} that $\v$ is a subalgebra of $\g$, which is equivalent to $\mathcal Z=0$ according to Proposition \ref{proposition-subalgebra}. In particular, we have that $[\xi_i,\xi_j]=2\delta \xi_k$ for some $\delta\in\R$. Since $\mathcal Z=0$, equation \eqref{adZ} becomes $\psi^2+\delta \psi=0$, and from the fact that $\psi$ is invertible we obtain that $\delta\neq 0$ and $\psi=-\delta I|_{\h}$. Hence, $\v\cong\mathfrak{so}(3)$ and, moreover, it follows from the definition of $\psi$ that $[\xi_i,X]=\delta \varphi_i X$ for any $X\in\h$ and $i=1,2,3$.

We compute now, for $X,Y\in\mathfrak h$,
\begin{align*}
[\xi_i, [X,Y]] & = [[\xi_i,X],Y]+[X,[\xi_i,Y]]\\
               & = \delta([\varphi_i X,Y]+[X,\varphi_i Y])\\
               & =0.
\end{align*}
If we decompose $[X,Y]$ as $[X,Y]=[X,Y]_\v+[X,Y]_\h$, with $[X,Y]_\v\in\mathfrak v$ and $[X,Y]_\h\in \mathfrak h$, we obtain that
\begin{align*}
0  & = [\xi_i, [X,Y]]  \\
   & = [\xi_i,[X,Y]_\v] +[\xi_i,[X,Y]_\h]\\
   & = [\xi_i,[X,Y]_\v] + \delta \varphi_i([X,Y]_\h),
\end{align*}
with $[\xi_i,[X,Y]_\v]\in \mathfrak v$ and $\varphi_i([X,Y]_\h)\in\mathfrak h$, since $\mathfrak v$ is a subalgebra and $\varphi_i$ preserves $\mathfrak h$. Therefore
\[ [\xi_i,[X,Y]_\v]=0, \qquad \varphi_i([X,Y]_\h)=0. \]
Since  $\varphi_i|_{\mathfrak h }$ is an isomorphism, we have that $[X,Y]_\h=0$.

It follows from $[\xi_i,[X,Y]_\v]=0$ for all $i$ that $[X,Y]_\v=0$, since $\mathfrak{so}(3)$ has trivial center. Thus $[X,Y]=0$ and $\mathfrak h$ is an abelian subalgebra of $\mathfrak g$. Moreover, since $\operatorname{ad}_{\xi_i}$ preserves $\mathfrak h$ for all $i$, $\mathfrak h$ is an abelian ideal of $\mathfrak g$.
\end{proof}

\

Let us move now to the case when $\psi=0$. We point out that in this case $\mathcal Z$ is a central element of $\g$, according to Corollary \ref{Zcentral}. In the next two propositions we distinguish the cases $\mathcal Z=0$ and $\mathcal Z\neq 0$.

\begin{proposition}\label{psi2}
Let $({\mathfrak g},\varphi_i,\xi_i,\eta_i)$ be an almost $3$-contact Lie algebra with abelian structure such that $\psi=0$ and $\mathcal Z=0$.
\begin{enumerate}
	\item[\textsc{1.}] If $\delta=0$ then $\v$ is in the center of $\g$ and therefore $\g$ is a central extension $\g=\v\oplus_\theta \h$ for certain $J_i$-invariant $\v$-valued $2$-cocycle $\theta$ of $\h$, where $J_i=\varphi_i|_\h$, $i=1,2,3$.
	\item[\textsc{2.}] If $\delta\neq 0$  then both $\v$ and $\h$ are ideals of $\g$ and therefore $\g=\v\times \h$, with $\v\cong \mathfrak{so}(3)$ and $\h$ carries an abelian hypercomplex structure.
\end{enumerate}
\end{proposition}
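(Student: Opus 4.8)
The plan is to extract the structural consequences of the two hypotheses $\psi=0$ and $\mathcal{Z}=0$ first, and then split into the two cases according to the sign of $\delta$.

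The first step is to observe that $\psi=0$ forces $\ad_{\xi_i}|_{\h}=0$ for every $i$. This is immediate from Lemma \ref{ker-im}, since $\operatorname{Ker}\ad_{\xi_i}|_{\h}=\operatorname{Ker}\psi=\h$; equivalently, one uses the invertibility of $\varphi_i|_{\h}$ in the defining identity $\psi=(\ad_{\xi_i}\circ\varphi_i)|_{\h}$. Hence $[\xi_i,X]=0$ for all $X\in\h$ and all $i$. Secondly, since $\mathcal{Z}=0$, equation \eqref{zeta} collapses to $[\xi_j,\xi_k]=2\delta\xi_i$ for every even permutation $(i,j,k)$ of $(1,2,3)$. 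These two facts determine all the brackets of $\g$ up to the value of $\delta$.

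For part \textsc{1.}, when $\delta=0$ the vertical brackets $[\xi_j,\xi_k]$ all vanish, and combined with $[\xi_i,X]=0$ this shows $\ad_{\xi_i}=0$, so every $\xi_i$, and hence all of $\v$, lies in $\z(\g)$. The central extension description, together with the $J_i$-invariance of the $\v$-valued $2$-cocycle $\theta$, then follows directly from Proposition \ref{proposition-3extension}.

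For part \textsc{2.}, when $\delta\neq0$ the relations $[\xi_j,\xi_k]=2\delta\xi_i$ show via Proposition \ref{proposition-subalgebra} that $\v$ is a subalgebra isomorphic to $\mathfrak{so}(3)$; since moreover $[\xi_i,X]=0$ we get $[\v,\g]\subseteq\v$, so $\v$ is an ideal. The crux is to prove that $\h$ is also an ideal, for which it suffices to establish $[\h,\h]\subseteq\h$. Given $X,Y\in\h$, the Jacobi identity and $[\xi_i,X]=0$ yield $[\xi_i,[X,Y]]=0$ for all $i$; decomposing $[X,Y]=[X,Y]_\v+[X,Y]_\h$ and using $\ad_{\xi_i}|_{\h}=0$, this forces $[\xi_i,[X,Y]_\v]=0$ for all $i$, and since $\v\cong\mathfrak{so}(3)$ has trivial center the vertical component $[X,Y]_\v$ must vanish. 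I expect this elimination of the vertical component to be the main obstacle, as it is the only place where the precise $\mathfrak{so}(3)$ structure of $\v$ is genuinely exploited. Once $[\h,\h]\subseteq\h$ is known, $\h$ is a subalgebra, and because $[\v,\h]=0$ it is in fact an ideal; hence $\g=\v\times\h$ as a direct product of ideals. Finally, the quaternionic relations \eqref{3-sasaki} restricted to $\h$ and the abelian condition $[\varphi_iX,\varphi_iY]=[X,Y]$ show that $\{J_i:=\varphi_i|_{\h}\}$ is an abelian hypercomplex structure on the genuine Lie algebra $\h$, completing the proof.
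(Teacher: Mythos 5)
Your proposal is correct and follows essentially the same route as the paper: reduce to $[\xi_i,X]=0$ and $[\xi_j,\xi_k]=2\delta\xi_i$, handle $\delta=0$ via Proposition \ref{proposition-3extension}, and for $\delta\neq 0$ kill the vertical component of $[X,Y]$ using the Jacobi identity together with the triviality of the center of $\mathfrak{so}(3)$. No gaps.
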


\begin{proof}
As $\psi=0$ we have that $[\xi,X]=0$ for all $\xi\in\mathfrak v$ and $X\in \mathfrak h$. Also, $\v$ is a subalgebra of $\g$ since $\mathcal Z=0$.

If $\delta=0$ then $\v$ is abelian and therefore is contained in $\z(\g)$. The statement follows from Proposition \ref{proposition-3extension}.

If $\delta\neq 0$ then $\v\cong\mathfrak{so}(3)$, and we will show next that $\h$ is a subalgebra. For $X,Y\in\mathfrak h$ we may decompose $[X,Y]$ as $[X,Y]=[X,Y]_\v+[X,Y]_\h$, with $[X,Y]_\v\in\mathfrak v$ and $[X,Y]_\h\in \mathfrak h$. For $\xi\in \mathfrak v$, we compute
\[ 0=[\xi,[X,Y]]+[X,[Y,\xi]]+[Y,[\xi,X]]=[\xi,[X,Y]_\v+[X,Y]_\h]=[\xi,[X,Y]_\v].\]
Since $\mathfrak v$ has trivial center, we obtain $[X,Y]_\v=0$, thus $\mathfrak h$ is a subalgebra of $\mathfrak g$, equipped with an abelian hypercomplex structure $\{J_i=\varphi_i|_\h\}$. Moreover, since $[\mathfrak v, \mathfrak h]=0$, we obtain that $\mathfrak g \cong \v\times \h$.
\end{proof}

\medskip

\begin{proposition}\label{psi3}
Let $({\g},\varphi_i,\xi_i,\eta_i)$ be an almost $3$-contact Lie algebra with abelian structure such that $\psi=0$ and $\mathcal Z\neq 0$. Then $\h$ is an ideal of $\g$ with an abelian hypercomplex structure. Moreover, the center of $\h$ contains $\mathcal Z$ and $\mathcal Z_i:=\varphi_i \mathcal Z$ for $i=1,2,3$.

The adjoint action of $\xi_i, \, i=1,2,3$, is given by
\begin{equation}\label{adjoint}
 [\xi_i,\xi_j]=2\delta\xi_k-\mathcal Z_k, \qquad [\xi_i,X]=0,
\end{equation}
for any even permutation $(i,j,k)$ of $(1,2,3)$ and for any $X \in\h$. If $\delta\neq 0$ then $\g\cong \u\times \h$, where $\u=\text{span}\{2\delta \xi_1-\mathcal{Z}_1,2\delta \xi_2-\mathcal{Z}_2, 2\delta \xi_3-\mathcal{Z}_3 \}$ is an ideal isomorphic to $\mathfrak{so}(3)$.
\end{proposition}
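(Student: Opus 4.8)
The plan is to read off all the structure directly from the two standing hypotheses $\psi=0$ and $\mathcal Z\neq 0$, combined with equation \eqref{zeta} and Corollary \ref{Zcentral}. First I would record the vertical action: since $\psi=(\varphi_i\circ\ad_{\xi_i})|_{\h}=0$ while $\varphi_i|_{\h}=J_i$ is invertible and $\ad_{\xi_i}$ preserves $\h$ by Lemma \ref{lemma}, it follows at once that $\ad_{\xi_i}|_{\h}=0$, i.e. $[\xi_i,X]=0$ for every $X\in\h$ and $i=1,2,3$. The bracket $[\xi_i,\xi_j]=\zeta_k=-\varphi_k\mathcal Z+2\delta\xi_k=2\delta\xi_k-\mathcal Z_k$ is then merely a rewriting of \eqref{zeta} with $\mathcal Z_k=\varphi_k\mathcal Z$. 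Finally $\psi=0$ trivially gives $\psi^2+\delta\psi=0$, so Corollary \ref{Zcentral} yields $\mathcal Z\in\z(\g)$. This already establishes the displayed adjoint action \eqref{adjoint}.

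Next I would show that $\h$ is a subalgebra, which by $[\v,\h]=0$ upgrades immediately to $\h$ being an ideal. For $X,Y\in\h$ write $[X,Y]=W+[X,Y]_{\h}$ with $W\in\v$; the Jacobi identity together with $[\xi_i,X]=0$ gives $[\xi_i,[X,Y]]=0$, and since $[\xi_i,[X,Y]_{\h}]=0$ as well, I obtain $[\xi_i,W]=0$ for all $i$. Writing $W=\sum_r c_r\xi_r$ and expanding $[\xi_i,W]$ via the vertical brackets just found, the horizontal component of $[\xi_i,W]$ is a linear combination of $\mathcal Z_1,\mathcal Z_2,\mathcal Z_3$. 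This is the single place where the hypothesis $\mathcal Z\neq 0$ is indispensable, and it is the main obstacle of the proof: because $\{J_1,J_2,J_3\}$ is a hypercomplex structure, the vectors $\mathcal Z_1,\mathcal Z_2,\mathcal Z_3=\varphi_1\mathcal Z,\varphi_2\mathcal Z,\varphi_3\mathcal Z$ are linearly independent whenever $\mathcal Z\neq 0$ (a nonzero quaternion acts invertibly on the quaternionic line through $\mathcal Z$). Reading off the coefficients forces $c_r=0$, hence $W=0$ and $[X,Y]\in\h$. Once $\h$ is a subalgebra, $\{J_i=\varphi_i|_{\h}\}$ is an abelian hypercomplex structure (the quaternion relations come from \eqref{3-sasaki} and abelianness from condition \textsc{2.} of Definition \ref{definition-abelian}), and $\mathcal Z,\mathcal Z_i\in\z(\h)$: indeed $\mathcal Z$ is central in $\g$, so $[\mathcal Z,\varphi_iX]=0$, and abelianness gives $[\mathcal Z_i,X]=[\varphi_i\mathcal Z,X]=-[\mathcal Z,\varphi_iX]=0$ for all $X\in\h$.

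Finally, for $\delta\neq0$ I would set $u_i:=2\delta\xi_i-\mathcal Z_i$ and verify the product decomposition by hand. Their vertical parts $2\delta\xi_i$ are linearly independent, so the $u_i$ are independent and $\u\cap\h=0$, giving $\g=\u\oplus\h$ as vector spaces. Using $[\xi_i,\mathcal Z_j]=0$, the centrality of the $\mathcal Z_i$ in $\h$, and $[\xi_i,\xi_j]=u_k$, a short computation gives $[u_i,u_j]=4\delta^2[\xi_i,\xi_j]=4\delta^2 u_k$, so $\u$ is a subalgebra with all structure constants equal to $4\delta^2>0$, hence isomorphic to $\mathfrak{so}(3)$. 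Likewise $[u_i,X]=2\delta[\xi_i,X]-[\mathcal Z_i,X]=0$ for $X\in\h$, so $[\u,\h]=0$; together with $\g=\u\oplus\h$ and both summands being subalgebras, this yields $\g\cong\u\times\h$ with both factors ideals, completing the proof.
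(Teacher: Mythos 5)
Your proof is correct and follows essentially the same route as the paper: derive \eqref{adjoint} from $\psi=0$ and \eqref{zeta}, use the Jacobi identity to get $[\xi_i,[X,Y]_{\v}]=0$ and linear independence (coming from $\mathcal Z\neq 0$ and the quaternionic relations) to kill the vertical part of $[X,Y]$, then compute $[u_i,u_j]=4\delta^2 u_k$. You in fact make explicit two points the paper leaves implicit — the linear independence of $\mathcal Z_1,\mathcal Z_2,\mathcal Z_3$ and the verification that $\mathcal Z,\mathcal Z_i$ lie in $\z(\h)$ — which is a welcome addition rather than a deviation.
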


\begin{proof}
Let us note first that the first equation in \eqref{adjoint} is nothing but \eqref{zeta}, while the second one follows from $\psi=0$.

We prove next that $\h$ is a subalgebra of $\g$. For $X,Y\in\h$, using the Jacobi identity and \eqref{adjoint} we have that
\[ [\xi_i,[X,Y]]=[[\xi_i,X],Y]+[X,[\xi_i,Y]]=0.\]
If we decompose $[X,Y]$ as $[X,Y]=[X,Y]_\v+[X,Y]_\h$, with $[X,Y]_\v\in\mathfrak v$ and $[X,Y]_\h\in \mathfrak h$, then we obtain $[\xi_i,[X,Y]_\v]=0$. Let us further write $[X,Y]_\v=a_1\xi_1+a_2\xi_2+a_3\xi_3$ for some $a_1,a_2,a_3\in\R$. Using \eqref{adjoint} again we get
\[   0= [\xi_1,a_1\xi_1+a_2\xi_2+a_3\xi_3]= a_2(2\delta \xi_3-\mathcal{Z}_3)-a_3(2\delta \xi_2-\mathcal{Z}_2),    \]
and therefore $a_2=a_3=0$. Analogously, considering $[\xi_2,[X,Y]_\v]=0$ we also obtain $a_1=0$, hence $[X,Y]_\v=0$ and $\h$ is a subalgebra. The fact that $\h$ is an ideal follows from $[\v,\h]=0$.

The last statement follows from $[2\delta \xi_i-\mathcal{Z}_i,2\delta \xi_j-\mathcal{Z}_j]=4\delta^2 (2\delta \xi_k-\mathcal{Z}_k)$, for any even permutation $(i,j,k)$ of $(1,2,3)$, which implies that $\u\cong\mathfrak{so}(3)$.
\end{proof}
	
\medskip

\subsection{Classification in dimension $7$}

Here we will provide the classification of $7$-dimensional Lie algebras admitting an abelian almost $3$-contact structure, using the results obtained in the previous subsection.

\begin{theorem}\label{theo7dim}
Let $\g$ be a non-abelian $7$-dimensional Lie algebra admitting an abelian almost $3$-contact structure $(\varphi_i,\xi_i,\eta_i)$. Then $\psi$ is invertible or $\psi=0$. Moreover,
\begin{enumerate}
	\item[\emph{(a)}] If $\psi$ is invertible, then $\g\cong\mathfrak{so}(3)\ltimes \R^4$ with brackets given as in \eqref{beta-delta}.
    \item[\emph{(b)}] If $\psi=0$ and $\mathcal Z=0$, then $\g$ is isomorphic to one of the following Lie algebras: $\R^3\times\aff(\C),\, \h_2^\R\times\R^2,\, \h_1^\C\times\R, \, \h_1^\H, \, \mathfrak{so}(3)\times \R^4, \, \mathfrak{so}(3)\times \aff(\C)$.
    \item[\emph{(c)}] If $\psi=0$ and $\mathcal Z\neq 0$, then $\g$ is isomorphic to one of the following Lie algebras: $\mathfrak{so}(3)\times \R^4$ or $\n\times \R$, where $\n$ is the free $2$-step nilpotent Lie algebra on $3$ generators.
\end{enumerate}
\end{theorem}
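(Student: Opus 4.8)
The plan is to combine the rank dichotomy for $\psi$ with the three structural Propositions \ref{psi1}, \ref{psi2}, \ref{psi3}, and then to pin down the $4$-dimensional horizontal factor via Proposition \ref{hypercomplex-4d}. First I would record the numerology: since $\dim\g=4n+3=7$ we have $n=1$ and $\dim\h=4$, so by Lemma \ref{ker-im} the rank of $\psi$ is a multiple of $4$ bounded by $4$; hence $\psi$ is either invertible or zero, which is the first assertion. Case (a) is then immediate from Proposition \ref{psi1}, giving $\g\cong\mathfrak{so}(3)\ltimes\R^4$ with brackets \eqref{beta-delta}.

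For case (b), where $\psi=0$ and $\mathcal Z=0$, I would invoke Proposition \ref{psi2}. In the subcase $\delta\neq0$ it yields $\g\cong\mathfrak{so}(3)\times\h$ with $\h$ a $4$-dimensional Lie algebra carrying an abelian hypercomplex structure; Proposition \ref{hypercomplex-4d} forces $\h\cong\R^4$ or $\h\cong\aff(\C)$, producing $\mathfrak{so}(3)\times\R^4$ and $\mathfrak{so}(3)\times\aff(\C)$. In the subcase $\delta=0$, Proposition \ref{psi2} presents $\g$ as a central extension $\v\oplus_\theta\h$ by a $J_i$-invariant $\v$-valued $2$-cocycle $\theta$, with $\h$ again $\R^4$ or $\aff(\C)$. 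When $\h\cong\aff(\C)$, Example \ref{affC-3} shows every such $\theta$ is exact, whence $\g\cong\R^3\times\aff(\C)$. When $\h\cong\R^4$ the whole content sits in $\theta$, and I would classify it by the dimension $r$ of the span of its three components $\sigma_1,\sigma_2,\sigma_3$; ruling out $r=0$ (the abelian algebra, excluded by hypothesis), the values $r=1,2,3$ should correspond exactly to Examples \ref{ex3}, \ref{ex2}, \ref{ex1}, i.e. to $\h_2^\R\times\R^2$, $\h_1^\C\times\R$ and $\h_1^\H$.

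For case (c), with $\psi=0$ and $\mathcal Z\neq0$, I would apply Proposition \ref{psi3}: $\h$ is a $4$-dimensional abelian-hypercomplex ideal whose center contains the nonzero vector $\mathcal Z$. Since $\z(\aff(\C))=0$, this rules out $\h\cong\aff(\C)$ and forces $\h\cong\R^4$. If $\delta\neq0$ the proposition directly gives $\g\cong\u\times\h\cong\mathfrak{so}(3)\times\R^4$. If $\delta=0$, the brackets \eqref{adjoint} reduce to $[\xi_i,\xi_j]=-\mathcal Z_k$ with all other brackets trivial; since $\{\mathcal Z,\mathcal Z_1,\mathcal Z_2,\mathcal Z_3\}$ is a basis of $\R^4$ (the quaternionic orbit of the nonzero vector $\mathcal Z$), the vector $\mathcal Z$ spans a central $\R$-factor while $\langle\xi_1,\xi_2,\xi_3,\mathcal Z_1,\mathcal Z_2,\mathcal Z_3\rangle$ is $2$-step nilpotent with three linearly independent brackets $\mathcal Z_k$, i.e. the free $2$-step nilpotent Lie algebra $\n$ on three generators. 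Hence $\g\cong\n\times\R$.

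The main obstacle is the last subcase of (b): showing that a $J_i$-invariant $\v$-valued $2$-cocycle on $\R^4$ is, up to Lie algebra isomorphism, determined by the number $r$ of independent components. The key observation is that a $2$-form on $\R^4$ invariant under $J_1,J_2,J_3$ simultaneously must be anti-self-dual for a compatible metric and orientation, so the $\sigma_i$ lie in the $3$-dimensional space $\alt^-$ of such forms, every nonzero element of which is non-degenerate; thus each nonzero $\sigma_i$ is symplectic. Using that the structure-preserving automorphisms of $(\R^4,\{J_i\})$ act on $\alt^-$ as $\operatorname{SO}(3)$, together with the freedom to change basis of $\v$, I would normalize any triple of rank $r$ to the standard triple appearing in Example \ref{ex1} (for $r=3$), Example \ref{ex2} (for $r=2$) or Example \ref{ex3} (for $r=1$); these three algebras are pairwise non-isomorphic since their commutator ideals have dimensions $3,2,1$. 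Verifying that this normalization can always be carried out and that it reproduces the explicit cocycles of the three examples is the only genuinely computational point; everything else follows formally from the cited propositions.
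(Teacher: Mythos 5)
Your proposal is correct and follows essentially the same route as the paper: the rank dichotomy from Lemma \ref{ker-im}, then Propositions \ref{psi1}, \ref{psi2}, \ref{psi3} combined with Proposition \ref{hypercomplex-4d} and Example \ref{affC-3} in each case. The only difference is in case (b) with $\h\cong\R^4$, $\delta=0$, where the paper simply encodes the cocycle in a $3\times 3$ matrix $A$ and asserts that $\operatorname{rank}(A)$ determines the isomorphism class, whereas you justify this via the non-degeneracy of nonzero anti-self-dual $2$-forms and the normalization of the triple $(\sigma_1,\sigma_2,\sigma_3)$ — a worthwhile elaboration of a step the paper leaves implicit, but not a different argument.
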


\begin{proof}
It follows from Lemma \ref{ker-im} that $\dim\operatorname{Ker}\psi$ is $0$ or $4$, therefore either $\psi$ is invertible or $\psi=0$.

(a) This follows by applying Proposition \ref{psi1}, since $\psi$ is invertible.

\medskip

(b) When $\psi=0$ and $\mathcal Z=0$, we apply Proposition \ref{psi2}. If we assume $\delta=0$ then $\g$ is isomorphic to a central extension $\mathbb R^3\oplus_\theta \mathfrak h$, where $\mathfrak h$ is a 4-dimensional Lie algebra equipped with an abelian hypercomplex structure $\{J_i\}$ and $\theta$ is a $J_i$-invariant $\mathbb R^3$-valued 2-cocycle on $\mathfrak h$. According to Proposition \ref{hypercomplex-4d}, we have that $\mathfrak h$ is isomorphic to $\mathbb R^4$ or $\mathfrak{aff}(\mathbb C)$.

If $\mathfrak h\cong \mathfrak{aff}(\mathbb C)$ then $\mathfrak g\cong \mathbb R^3\times \mathfrak{aff}(\mathbb C)$ (see Example \ref{affC-3}).

If $\mathfrak h\cong \mathbb R^4$ then we may assume that the hypercomplex structure $\{J_i\}$ on $\R^4$ is given by
\begin{equation}\label{hypercomplex}
 J_1=\begin{pmatrix} & -1 && \\ 1 &&& \\ &&& -1\\ && 1 & \end{pmatrix}, \quad J_2=\begin{pmatrix} && -1 & \\  &&& 1 \\ 1 &&& \\ & -1 && \end{pmatrix}, \quad J_3=\begin{pmatrix} &&& -1 \\ && -1 & \\ & 1 && \\ 1 &&& \end{pmatrix},
\end{equation}
in some ordered basis $\{e_1,e_2,e_3,e_4\}$. Accordingly, the Lie bracket on $\mathfrak g$ is given by
\begin{align*}
[e_1,e_2] =  -[e_3,e_4] & =  \alpha_1\xi_1+\beta_1\xi_2+\gamma_1\xi_3 \\
[e_1,e_3] =  [e_2,e_4] & =  \alpha_2\xi_1+\beta_2\xi_2+\gamma_2\xi_3 \\
[e_1,e_4] =  -[e_2,e_3] & =  \alpha_3\xi_1+\beta_3\xi_2+\gamma_3\xi_3,
\end{align*}
for some $\alpha_i,\beta_i,\gamma_i\in \mathbb R$. Therefore, the Lie bracket on $\mathfrak g$ is encoded by the $3\times 3$ matrix
\[ A=\begin{pmatrix} \alpha_1 & \alpha_2 & \alpha_3 & \\ \beta_1 & \beta_2 & \beta_3  \\ \gamma_1 & \gamma_2 & \gamma_3 \end{pmatrix}. \]
Note that $A\neq 0$ since $\g$ is not abelian, in fact, $\mathfrak g$ is $2$-step nilpotent. Moreover, the rank of $A$ determines the isomorphism class of the Lie algebra. Indeed,  if $\text{rank}(A)=1$ then $\mathfrak g\cong \mathfrak h_2^{\mathbb R}\times \mathbb R^2$; if $\text{rank}(A)=2$ then $\mathfrak g\cong \mathfrak h_1^{\mathbb C}\times \mathbb R$, and if $\text{rank}(A)=3$ then $\mathfrak g\cong \mathfrak h_1^{\mathbb H}$.

If we assume $\delta\neq 0$, we obtain that $\g\cong \mathfrak{so}(3)\times \mathfrak h$, where $\mathfrak h$ is an ideal equipped with an abelian hypercomplex structure.  It follows from Proposition \ref{hypercomplex-4d} that either $\mathfrak h\cong \mathbb{R}^4$ or $\mathfrak h\cong \mathfrak{aff}(\mathbb C)$.

\medskip

(c) When $\psi=0$ and $\mathcal Z\neq 0$, we apply Proposition \ref{psi3}. In this case $\h$ is a $4$-dimensional Lie algebra equipped with an abelian hypercomplex structure and non trivial center, therefore $\h=\R^4$.

If $\delta =0$ then the only non-zero brackets are
\[ [\xi_i,\xi_j]=-\mathcal{Z}_k,\]
for any even permutation $(i,j,k)$ of $(1,2,3)$. Clearly, $\mathfrak g$ is a 2-step nilpotent Lie algebra. Moreover, $\mathfrak g=\mathfrak n\times \mathbb R$, where $\mathfrak n=\text{span}\{\xi_1,\xi_2,\xi_3,\mathcal{Z}_1,\mathcal{Z}_2,\mathcal{Z}_3 \}$ is isomorphic to the free 2-step nilpotent Lie algebra on 3 generators.

If $\delta\neq 0$ then the only non-zero brackets are
\[ [\xi_i,\xi_j]=2\delta \xi_k-\mathcal{Z}_k,\]
for any even permutation $(i,j,k)$ of $(1,2,3)$. Moreover, $\mathfrak g \cong \u\times \mathbb R^4$ where $\u=\text{span}\{2\delta \xi_1-\mathcal{Z}_1,2\delta \xi_2-\mathcal{Z}_2, 2\delta \xi_3-\mathcal{Z}_3\}$ is an ideal isomorphic to $\mathfrak{so}(3)$.
\end{proof}

\smallskip

\begin{remark}
Concerning the proof of (2) in the theorem above, in the case $A\in GL(3,\mathbb R)$, that is, $\mathfrak g\cong \mathfrak h_1^{\mathbb H}$, let us consider an inner product on $\mathfrak g$ such that the basis $\{\xi_1,\xi_2,\xi_3,e_1,e_2,e_3,e_4\}$ is orthonormal. Then it can be seen that $\mathfrak g$ with this inner product is an $H$-type Lie algebra if and only if $A\in O(3)$ (see \cite{K} for the relevant definitions).
\end{remark}

\begin{remark}
The Lie algebra $\mathfrak{so}(3)\ltimes \mathbb R^4$ has already appeared in other contexts. For instance, it is proved in \cite{FT} that $SU(2)\ltimes \mathbb R^4$, the associated simply connected Lie group, carries a weakly integrable generalized $G_2$-structure with respect to a non-zero closed $3$-form $H$. Moreover, this group admits compact quotients which inherit this structure (see also Section \ref{compact}). In \cite{CF} it is shown that this Lie group also admits a left invariant contact $SU(3)$-structure.
	
On the other hand, the 2-step nilpotent Lie algebra $\mathfrak n\times \mathbb R$ admits a calibrated $G_2$-structure, according to \cite{CFe}. Since the structure constants are rational, there exists a cocompact discrete subgroup, due to a well known criterion by Malcev (\cite{Ma}); the quotient, called a nilmanifold, carries therefore an induced calibrated $G_2$-structure.
\end{remark}

\

\section{Canonical abelian almost $3$-contact structures}\label{section-canonical}

In this section we will introduce the class of \emph{canonical abelian} almost $3$-contact structures. As motivated in the introduction, referring to the notion of canonical almost $3$-contact metric manifolds introduced in \cite{AgDi}, we will investigate the existence of a canonical connection on a Lie group $G$, endowed with a left invariant abelian almost $3$-contact metric structure $(\varphi_i,\xi_i,\eta_i,g)$.

We recall some basic facts about connections with totally skew-symmetric torsion. For more details we refer to \cite{Ag}.

A metric
connection $\nabla$ with torsion $T$ on a Riemannian manifold $(M,g)$ is said to have \emph{totally skew-symmetric torsion},
or \emph{skew torsion} for short, if the $(0,3)$-tensor field $T$ defined by
\[T(X,Y,Z)=g(T(X,Y),Z)\]
is a $3$-form. The relation between $\nabla$ and the Levi-Civita connection
$\nabla^g$ is then given by
\begin{equation}\label{nabla}
\nabla_XY=\nabla^g_XY+\frac{1}{2}T(X,Y).
\end{equation}
It is well known that $\nabla$ has the same geodesics as $\nabla^g$. The curvature tensor of $\nabla$, defined by $R(X,Y)=[\nabla_X,\nabla_Y]-\nabla_{[X,Y]}$, satisfies
\begin{equation*}
g(R(X,Y)Z,W)=-g(R(X,Y)W,Z),
\end{equation*}
but the Bianchi identities are more complicated than the Riemannian ones. As a consequence, in general the identity
\begin{equation}\label{coppie}
g(R(X,Y)Z,W)=g(R(Z,W)X,Y)\end{equation}
is not satisfied and the Ricci tensor of $R$ is not necessarily symmetric. Nevertheless, if $\nabla$ has parallel torsion, i.e. $\nabla T=0$, then \eqref{coppie} holds and the Ricci tensor is symmetric. Actually, for a metric connection with totally skew-symmetric torsion $T$, the Ricci tensor is symmetric if and only if the torsion is coclosed (\cite{IP}).

\

\subsection{Connections with skew torsion on Hermitian and almost contact metric manifolds}
On any Hermitian manifold $(M,J,g)$ there exists a unique metric connection $\nabla^b$ with totally skew-symmetric torsion such that $\nabla^b J=0$. This connection is known as the Bismut connection and its torsion is the $3$-form given by $c(X,Y,Z)=Jd\Omega(X,Y,Z):=-d\Omega(JX,JY,JZ)$, where $\Omega=g(\cdot,J\cdot)$ is the K\"ahler form.
If $(\h,J,g)$ is a Hermitian Lie algebra with abelian complex structure $J$, considering any Lie group $H$ with Lie algebra $\h$ and the corresponding left invariant structure $(J,g)$, the torsion of the Bismut connection on $H$ is given by
\begin{equation}\label{bismut}
c(X,Y,Z)=-g([X,Y],Z)-g([Y,Z],X)-g([Z,X],Y)
\end{equation}
for every $X,Y\in\h$. In particular, the Bismut connection $\nabla^b$ satisfies
\[g(\nabla^b_XY,Z)=-g(X,[Y,Z])\]
for every $X,Y,Z\in\h$ (\cite{DF}).

\

In \cite{FI} T. Friedrich and S. Ivanov proved that an almost contact metric manifold $(M,\varphi,\xi,\eta,g)$ admits a metric connection with totally skew-symmetric torsion $\nabla$ such that $\nabla\varphi=\nabla\eta=\nabla\xi=0$ if and only if
\begin{enumerate}
\item[(a)] the tensor $N_\varphi$ (defined in \eqref{normality-tensor}) is totally skew-symmetric,
\item[(b)] $\xi$ is a Killing vector field.
\end{enumerate}
The connection $\nabla$ is uniquely determined. It is called the \emph{characteristic connection} of the structure, and its torsion is given by
\begin{equation}\label{torsio-characteristic}
T=\eta\wedge d\eta+N_{\varphi}+d^{\varphi}\Phi-\eta\wedge (\xi\lrcorner\, N_{\varphi}),
\end{equation}
where $d^{\varphi}\Phi$ is defined as
$d^{\varphi}\Phi(X,Y,Z)\, :=\, -d\Phi(\varphi X,\varphi Y,\varphi Z)$. Using this result, we can prove the following

\begin{proposition}\label{characteristic}
Let $\g$ be a Lie algebra endowed with an abelian almost contact metric structure $(\varphi,\xi,\eta,g)$. Let $G$ be any Lie group with Lie algebra $\g$,  with corresponding left invariant structure $(\varphi,\xi,\eta,g)$. Then the following conditions are equivalent:
\begin{enumerate}
\item[\emph{(a)}] $G$ admits a characteristic connection,
\item[\emph{(b)}] $\ad_\xi:\h\to\h$ is skew-symmetric,
\item[\emph{(c)}]$\xi\lrcorner\, d\Phi=0$.
\end{enumerate}
The torsion of the characteristic connection is given by
\[T=\eta\wedge d\eta+c,\]
where $c$ is the $3$-form defined on $\h$ by
\begin{equation}\label{c-characteristic}
c(X,Y,Z)=-g([X,Y],Z)-g([Y,Z],X)-g([Z,X],Y).
\end{equation}
\end{proposition}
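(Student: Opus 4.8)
The plan is to invoke the Friedrich--Ivanov criterion \cite{FI} recalled just before the statement: the characteristic connection exists if and only if $N_\varphi$ is totally skew-symmetric and $\xi$ is a Killing vector field. Since the structure is abelian it is normal (as observed after Definition \ref{definition-abelian}), so $N_\varphi=0$ identically; the skew-symmetry requirement is then vacuous and the existence of a characteristic connection is equivalent to $\xi$ being Killing. It therefore suffices to establish the chain of equivalences (a)\,$\Leftrightarrow$\,(b)\,$\Leftrightarrow$\,(c) and then to read off the torsion from \eqref{torsio-characteristic} with $N_\varphi=0$.

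For (a)\,$\Leftrightarrow$\,(b) I would use the standard fact that, for a left invariant metric, a left invariant field $\xi$ is Killing precisely when $\ad_\xi$ is skew-symmetric with respect to $g$. This follows from the Koszul formula: a direct computation gives $g(\nabla^g_Y\xi,Z)+g(\nabla^g_Z\xi,Y)=-g(\ad_\xi Y,Z)-g(Y,\ad_\xi Z)$ for all $Y,Z\in\g$. Because the abelian (hence normal) structure has $\h$ invariant under $\ad_\xi$ (Proposition \ref{proposition-normality}), the cases in which one argument equals $\xi$ are automatic ($\ad_\xi\xi=0$, and $g(\xi,\ad_\xi Z)=0$ for $Z\in\h$ since $\ad_\xi Z\in\h\perp\xi$), so skew-symmetry of $\ad_\xi$ on $\g$ reduces to skew-symmetry of $\ad_\xi|_\h$, which is condition (b).

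For (b)\,$\Leftrightarrow$\,(c) I would compute $\xi\lrcorner\, d\Phi$ directly. Using $\varphi\xi=0$ one gets $\Phi([X,Y],\xi)=0$, so for $X,Y\in\h$ the Lie-algebra formula for $d$ yields $d\Phi(\xi,X,Y)=-g(\ad_\xi X,JY)+g(\ad_\xi Y,JX)$, where $J=\varphi|_\h$. Exploiting that $\ad_\xi$ commutes with $J$ (abelian condition \textsc{1.}) and that $J$ is a $g$-isometry of $\h$, the second term rewrites as $-g(\ad_\xi(JY),X)$; letting $JY$ range over all horizontal vectors, $\xi\lrcorner\, d\Phi=0$ becomes exactly $g(\ad_\xi X,Z)+g(\ad_\xi Z,X)=0$ for all $X,Z\in\h$, i.e. condition (b). The components of $d\Phi$ with two vertical entries vanish trivially.

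The computational core, and the step I expect to require the most care, is the identification of the torsion. Setting $N_\varphi=0$ in \eqref{torsio-characteristic} leaves $T=\eta\wedge d\eta+d^\varphi\Phi$, so it remains to prove $d^\varphi\Phi=c$. Since $\varphi\xi=0$, $d^\varphi\Phi$ vanishes whenever an argument is vertical, so only $X,Y,Z\in\h$ matter; there $d^\varphi\Phi(X,Y,Z)=-d\Phi(JX,JY,JZ)$. The crucial input is the abelian identity $[JX,JY]=[X,Y]$ together with $\varphi(JW)=-W$ for $W\in\h$, which turns each term into $\Phi([\,\cdot,\cdot\,],J\,\cdot)=-g([\,\cdot,\cdot\,],\cdot)$; expanding $d\Phi(JX,JY,JZ)$ and simplifying gives exactly $-g([X,Y],Z)-g([Y,Z],X)-g([Z,X],Y)=c(X,Y,Z)$. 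Hence $T=\eta\wedge d\eta+c$, completing the proof.
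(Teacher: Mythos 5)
Your proposal is correct and follows essentially the same route as the paper's proof: reduce existence to the Killing condition via Friedrich--Ivanov and normality ($N_\varphi=0$), identify the Killing condition with skew-symmetry of $\ad_\xi|_\h$, compute $d\Phi(\xi,X,Y)$ using $[\ad_\xi,\varphi]=0$ to get (b)$\Leftrightarrow$(c), and evaluate $d^\varphi\Phi$ on horizontal vectors using $[\varphi X,\varphi Y]=[X,Y]$ to identify the torsion with $\eta\wedge d\eta+c$. The only differences are presentational (e.g.\ your explicit Koszul-formula justification of the Killing criterion).
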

\begin{proof}
Since the structure is abelian, it is normal. Therefore, the Lie group $G$ admits a characteristic connection if and only if $\xi$ is Killing, which is equivalent to requiring $\ad_\xi$ to be skew-symmetric on $\g$. Since $\h$ is $\ad_\xi$-invariant, this is equivalent to the skew-symmetry of $\ad_\xi:\h\to\h$. On the other hand, for every $X,Y\in \h$, we have
\begin{align*}
	d\Phi(\xi,X,Y)&=-\Phi([\xi,X],Y)-\Phi([X,Y],\xi)-\Phi([Y,\xi],X)\\
	              &=-g(\mathrm{ad}_\xi X,\varphi Y)+g(\mathrm{ad}_\xi Y,\varphi X)\\
		          &=-g(\mathrm{ad}_\xi X,\varphi Y)-g(\mathrm{ad}_\xi \varphi Y,X),
	\end{align*}
so that $\ad_\xi:\h\to\h$ is skew-symmetric if and only if $\xi\lrcorner\, d\Phi=0$. The torsion of the characteristic connection is given by \eqref{torsio-characteristic}, where $N_\varphi=0$. Furthermore, $\xi\lrcorner\, d^{\varphi}\Phi=0$ and for every $X,Y,Z\in\h$
\begin{align*}
		d^{\varphi}\Phi(X,Y,Z)&={}-d\Phi (\varphi X,\varphi Y,\varphi Z)\\&=\Phi([\varphi X,\varphi Y],\varphi Z)+\Phi ([\varphi Y,\varphi Z],\varphi X)+\Phi ([\varphi Z,\varphi X],\varphi Y)\\
		&=\Phi ([X,Y],\varphi Z)+\Phi ([Y,Z],\varphi X)+\Phi ([Z,X],\varphi Y)\\
		&={}-g([X,Y],Z)-g([Y,Z],X)-g([Z,X],Y).
		\end{align*}
\end{proof}

\begin{remark}
Let us consider an almost contact metric Lie algebra $(\g,\varphi,\xi,\eta,g)$ with abelian structure such that $\xi$ is central. Then any Lie group $G$ with Lie algebra $\g$ admits a characteristic connection. In this case $\g$ is the $1$-dimensional central extension of a Hermitian Lie algebra $(\h,J,g)$ with abelian complex structure,  by a $J$-invariant $2$-cocycle $\sigma$.
The bracket of $\g$ applied to elements in $\h$ can be expressed as
 	\begin{equation*}
		[X,Y]=\sigma(X,Y)\xi+[X,Y]_\h,
	\end{equation*}
where $\sigma(X,Y)\in\R$ and  $[X,Y]_\h\in\h$.
Since $\xi$ and $\h$ are orthogonal we have that the $3$-form $c\in\alt^3\h^\ast$ defined in \eqref{c-characteristic} is given by
\begin{equation*}
	c(X,Y,Z)=-g([X,Y]_\h,Z)-g([Y,Z]_\h,X)-g([Z,X]_\h,Y), \quad X,Y,Z\in\h.
\end{equation*}
Therefore $c$ coincides with the torsion form of the corresponding Bismut connection $\nabla^b$ on $(\h,J,g)$.
\end{remark}

\begin{remark}
Let us consider an almost contact metric Lie algebra $(\g,\varphi,\xi,\eta,g)$ with abelian structure such that $d\eta=0$. By Proposition \ref{prop-extension-D}, $\mathfrak g$ is the $1$-dimensional extension of the Hermitian Lie subalgebra $(\h,J,g)$, with abelian complex structure,  by the derivation $D:=\ad_{\xi}:\h\to\h$, which commutes with $J$. If furthermore $D:\h\to \h $ is skew-symmetric, then $\g$ admits a characteristic connection $\nabla$. The torsion $T$ of $\nabla$ satisfies $\xi\lrcorner \,T=0$ and coincides on $\h$ with the torsion form $c$ of the corresponding Bismut connection. Notice that, if $(\h,J,g)$ is a K\"ahler Lie algebra, then $c=T=0$. In fact in this case, $(\g,\varphi,\xi,\eta,g)$ is a coK\"ahler Lie algebra (see Proposition \ref{prop-alpha-coKahler}), and the characteristic connection coincides with the Levi-Civita connection.
\end{remark}

\

\subsection{HKT manifolds and canonical almost $3$-contact metric manifolds}
Let $(M,J_i,g)$, $i=1,2,3$, be a hyperHermitian manifold, that is a hypercomplex manifold endowed with a Riemannian metric $g$ which is compatible with every complex structure $J_i$, $i=1,2,3$. Then $M$ is said to be a \emph{hyperK\"ahler with torsion} (HKT) manifold if it admits a metric connection with totally skew-symmetric torsion such that $\nabla J_i=0$. In fact $M$ is a HKT manifold if and only if $J_1d\Omega_1=J_2d\Omega_2=J_3d\Omega_3$,
where $\Omega_i$ is the K\"ahler form of the structure $(J_i,g)$. The metric connection with skew torsion parallelizing the three complex structures is uniquely determined: it is the connection with skew torsion
\[c=J_1d\Omega_1=J_2d\Omega_2=J_3d\Omega_3,\]
which coincides with the Bismut connection of each Hermitian structure $(J_i,g)$ (\cite{GP}). A HKT structure is called strong if the torsion $c$ is closed, and it is called weak otherwise. When $c=0$, $\nabla$ is the Levi-Civita connection of $g$ and the structure is hyperK\"ahler.

\

Let $\g$ be a Lie algebra endowed with a hypercomplex structure $J_i$, $i=1,2,3$, and a compatible inner product $g$. Then $(J_i,g)$, $i=1,2,3$, is said to be a HKT structure on $\g$ if any Lie group $G$ with Lie algebra $\g$, endowed with the corresponding left invariant structure $(J_i,g)$ is a HKT manifold (\cite{DF}). It is proved in \cite[Proposition 2.1]{DF} that, when the hypercomplex structure on $\g$ is abelian, then the structure on $G$ is HKT and the torsion of the Bismut connection is given by \eqref{bismut}. Furthermore, if the Lie algebra is non-abelian, the HKT structure on $G$ is weak.

 \

Going now to the context of almost $3$-contact structures, we recall the definition and the characterization of \emph{canonical almost $3$-contact metric manifolds} (see \cite{AgDi} for more details).

	\begin{definition}\label{definition-canonical}\cite{AgDi} An almost $3$-contact metric manifold $(M,\varphi_i,\xi_i,\eta_i,g)$ is called
	\emph{canonical}
	if the following conditions are satisfied:
	\begin{enumerate}
		\item[i)] each $N_{\varphi_i}$ is skew-symmetric on ${\mathcal H}$,
		\item[ii)] each $\xi_i$ is a Killing vector field,
		\item[iii)] for any $X,Y,Z\in\Gamma({\mathcal H})$ and any $i,j=1,2,3$,
		\[N_{\varphi_i}(X,Y,Z)-d\Phi_i(\varphi_i X,\varphi_i Y,\varphi_i Z)=N_{\varphi_j}(X,Y,Z)-d\Phi_j(\varphi_j X,\varphi_j Y,\varphi_j Z),
		\]
		\item[iv)] $M$ admits a Reeb Killing function $\beta\in C^\infty(M)$, that is the tensor fields $A_{ij}$ defined on $\mathcal H$ by
		\begin{equation*}
		A_{ij}(X,Y)\ :=\
		g(({\mathcal L}_{\xi_j}\varphi_i)X,Y)+d\eta_j(X,\varphi_i Y)+d\eta_j(\varphi_i X,Y),
		\end{equation*}
		satisfy
		\begin{equation}\label{Aij2}
		A_{ii}(X,Y)=0,\qquad A_{ij}(X,Y)=-A_{ji}(X,Y)=\beta\Phi_k(X,Y),
		\end{equation}
		for every $X,Y\in\Gamma({\mathcal H})$ and every even permutation $(i,j,k)$ of $(1,2,3)$.
	\end{enumerate}
\end{definition}

	\begin{theorem}\cite{AgDi}\label{theo_canonical}
		An almost $3$-contact metric manifold $(M,\varphi_i,\xi_i,\eta_i, g)$ is canonical, with Reeb Killing function $\beta$, if and only if it
		admits a metric connection $\nabla$ with skew torsion such that
		\begin{equation}\label{canonical}
		\nabla_X\varphi_i\, =\, \beta(\eta_k(X)\varphi _j -\eta_j(X)\varphi _k)
		\end{equation}
		for every vector field $X$ on $M$ and for every even permutation $(i,j,k)$ of $(1,2,3)$.
		If such a connection $\nabla$ exists, it is unique and its torsion is given by
		\begin{align*}
		T(X,Y,Z)&= N_{\varphi_i}(X,Y,Z)-d\Phi_i(\varphi_i X,\varphi_i Y,\varphi_i Z),\\
		T(X,Y,\xi_i)&= d\eta_i(X,Y),\\
		T(X,\xi_i,\xi_j)&=-g([\xi_i,\xi_j],X),\\
		T(\xi_1,\xi_2,\xi_3) &=\ 2(\beta -\delta),
		\end{align*}
		for every $X,Y,Z\in\Gamma({\mathcal H})$, and $i,j=1,2,3$. Here $\delta$ is the Reeb commutator function, that is a differentiable function such that $\eta_k([\xi_i,\xi_j]) = 2\delta\epsilon_{ijk}$.
	\end{theorem}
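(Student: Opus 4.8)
The plan is to reduce the statement to a purely linear problem for the torsion form and then solve it block by block according to the splitting $TM=\mathcal H\oplus\mathcal V$. Recall from \eqref{nabla} that any metric connection with skew torsion is of the form $\nabla_X Y=\nabla^g_X Y+\tfrac{1}{2}T(X,Y)$, where I identify the $3$-form $T$ with the vector-valued map via $g(T(X,Y),Z)=T(X,Y,Z)$. Hence the whole theorem is equivalent to showing that there is a \emph{unique} $3$-form $T$ for which $\nabla=\nabla^g+\tfrac{1}{2}T$ satisfies \eqref{canonical}, and that such a $T$ exists precisely when conditions i)--iv) of Definition \ref{definition-canonical} hold. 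A first useful reduction is to differentiate the defining relations $\xi_k=\varphi_i\xi_j$ and $\eta_k=\eta_i\circ\varphi_j$: since $\nabla$ is metric and \eqref{canonical} holds, one obtains the companion identities $\nabla_X\xi_i=\beta(\eta_k(X)\xi_j-\eta_j(X)\xi_k)$ and $\nabla_X\eta_i=\beta(\eta_k(X)\eta_j-\eta_j(X)\eta_k)$ already recorded in the introduction, which control the vertical components of $T$.

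For the direction assuming that $\nabla$ exists, I would extract the four conditions. Writing \eqref{canonical} as $(\nabla^g_X\varphi_i)Y+\tfrac{1}{2}\bigl(T(X,\varphi_i Y)-\varphi_i T(X,Y)\bigr)=\beta(\eta_k(X)\varphi_j-\eta_j(X)\varphi_k)Y$ and feeding in purely horizontal $X,Y$, the standard identity relating $N_{\varphi_i}$, $d\Phi_i$ and the torsion of a characteristic-type connection (as in \eqref{torsio-characteristic}) forces $N_{\varphi_i}$ to be skew on $\mathcal H$, which is i), while the horizontal block of $T$ must equal $N_{\varphi_i}(X,Y,Z)-d\Phi_i(\varphi_i X,\varphi_i Y,\varphi_i Z)$ for \emph{every} $i$, which is exactly condition iii). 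Taking $X$ vertical and contracting with $\xi_i$ yields that each $\xi_i$ is $\nabla^g$-Killing, since the symmetric part of $\nabla\xi_i$ vanishes and skew torsion does not affect the Killing equation; this is ii). Finally, contracting \eqref{canonical} with a vertical direction $\xi_j$ and pairing with $\varphi_i$ produces precisely the tensors $A_{ij}$, and the relation $A_{ij}=-A_{ji}=\beta\Phi_k$ of iv) is the statement that the vertical-direction rotation of the $\varphi_i$ is governed by a single function $\beta$. Uniqueness is then immediate, as metricity together with \eqref{canonical} and its companions pins down every component of $T$ on the four blocks $\alt^3\mathcal H$, $\alt^2\mathcal H\otimes\mathcal V$, $\mathcal H\otimes\alt^2\mathcal V$ and $\alt^3\mathcal V$.

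Conversely, assuming i)--iv), I would \emph{define} $T$ by the four formulas in the statement and check that they patch into a single $3$-form: skew-symmetry inside $\alt^3\mathcal H$ uses i) and the $i$-independence iii); the block $T(X,Y,\xi_i)=d\eta_i(X,Y)$ is automatically a $2$-form in the horizontal slots; $T(X,\xi_i,\xi_j)=-g([\xi_i,\xi_j],X)$ is consistent with the previous block exactly because ii) makes $\ad_{\xi_i}$ skew; and $T(\xi_1,\xi_2,\xi_3)=2(\beta-\delta)$ is a scalar. One then verifies \eqref{canonical} by a direct computation on each combination of horizontal and vertical arguments, using iii) for the all-horizontal case and iv) to produce the $\beta$-terms whenever a vertical direction is inserted.

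The main obstacle I anticipate lies in condition iv): showing that a single Reeb Killing function $\beta$ controls the rotation of all three $\varphi_i$ simultaneously, and that the resulting vertical-direction derivatives are compatible for every even permutation $(i,j,k)$. I would isolate this by fixing one permutation, expanding $(\nabla_{\xi_j}\varphi_i)$ in terms of $\mathcal L_{\xi_j}\varphi_i$, the torsion $T$, and $d\eta_j$, identifying the resulting combination with $A_{ij}$, and then cycling $(i,j,k)$. The antisymmetry $A_{ij}=-A_{ji}$ is what guarantees the cyclic closure, and hence that the construction is self-consistent and that $\beta$ is forced to be the same function for all three structures.
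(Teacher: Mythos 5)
This theorem is not proved in the paper at all: it is stated as a citation of \cite{AgDi}, so there is no internal argument to compare your proposal against. Judged on its own terms, your outline does follow the strategy one would expect (and which is the one used in \cite{AgDi}): decompose the torsion $3$-form according to the splitting $TM=\mathcal H\oplus\mathcal V$, show that \eqref{canonical} together with metricity forces each block, and read off conditions i)--iv) of Definition \ref{definition-canonical} as the compatibility constraints. The vertical blocks do come out correctly from the companion identity $\nabla_X\xi_i=\beta(\eta_k(X)\xi_j-\eta_j(X)\xi_k)$; for instance $T(\xi_1,\xi_2)=\nabla_{\xi_1}\xi_2-\nabla_{\xi_2}\xi_1-[\xi_1,\xi_2]=2\beta\xi_3-[\xi_1,\xi_2]$ yields both $T(X,\xi_1,\xi_2)=-g([\xi_1,\xi_2],X)$ and $T(\xi_1,\xi_2,\xi_3)=2(\beta-\delta)$.

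The gap is that the two places where the real work happens are only asserted. First, both directions hinge on the general identity expressing $2g\bigl((\nabla^g_X\varphi_i)Y,Z\bigr)$ in terms of $d\Phi_i$, $N_{\varphi_i}$ and related terms (the almost contact analogue of the Gray--Hervella type formula); you invoke ``the standard identity'' but never state or apply it, and without it one cannot actually see why the horizontal block of $T$ is forced to be $N_{\varphi_i}(X,Y,Z)-d\Phi_i(\varphi_iX,\varphi_iY,\varphi_iZ)$, nor why the skew-symmetry of $N_{\varphi_i}$ on $\mathcal H$ is necessary rather than merely convenient. Second, the converse direction --- defining $T$ by the four formulas and verifying that $\nabla=\nabla^g+\tfrac12T$ satisfies \eqref{canonical} on every combination of horizontal and vertical arguments --- is the heart of the theorem and is exactly where i)--iv) are consumed; ``a direct computation'' is not a proof here, since the mixed case $X\in\mathcal V$, $Y,Z\in\mathcal H$ is precisely where condition iv) and the single function $\beta$ enter, through the relation between $\nabla_{\xi_j}\varphi_i$, $\mathcal L_{\xi_j}\varphi_i$, $d\eta_j$ and the tensors $A_{ij}$ that you describe only qualitatively. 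As an outline of the argument in \cite{AgDi} your proposal is faithful; as a proof it is a skeleton with the load-bearing computations missing.
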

	
The connection $\nabla$ defined in the Theorem above is called the \emph{canonical connection} of $M$.
Besides equation \eqref{canonical}, it satisfies
\begin{equation}\label{derivatives}
\nabla_X\xi_i\, =\, \beta(\eta_k(X)\xi_j -\eta_j(X)\xi_k), \qquad \nabla_X\eta_i\, =\, \beta(\eta_k(X)\eta_j -\eta_j(X)\eta_k)
\end{equation}
for every vector field $X$ on $M$.
	A canonical almost $3$-contact metric manifold is called \emph{parallel}
	if $\beta=0$, in which case the canonical connection parallelizes all the structure tensor fields.

In a canonical almost $3$-contact metric manifold $(M,\varphi_i,\xi_i,\eta_i,g)$, each structure $(\varphi_i,\xi_i,\eta_i,g)$ admits a characteristic connection $\nabla^i$. In general the three characteristic connections do not coincide. In the parallel case, it can be shown that $\nabla^1=\nabla^2=\nabla^3=\nabla$.

\
	
A large class of canonical almost $3$-contact metric manifolds is given by \emph{$3$-$(\alpha,\delta)$-Sasaki} manifolds. These are defined as almost $3$-contact metric manifolds with structure $(\varphi_i,\xi_i,\eta_i,g)$ such that
\begin{equation}\label{differential_eta}
d\eta_i=2\alpha\Phi_i+2(\alpha-\delta)\eta_j\wedge\eta_k
\end{equation}
for every even permutation $(i,j,k)$ of $(1,2,3)$, where $\alpha\ne0$ and
$\delta$ are real constants.
A $3$-$(\alpha,\delta)$-Sasaki manifold is called \emph{degenerate} if
$\delta=0$ and \emph{nondegenerate} otherwise. Quaternionic Heisenberg groups are
examples of degenerate  $3$-$(\alpha,\delta)$-Sasaki manifolds \cite[Example 2.3.2]{AgDi}.
It is known that every $3$-$(\alpha,\delta)$-Sasaki manifold is hypernormal. In particular, when $\alpha=\delta=1$, the manifold is  \emph{$3$-Sasakian}. Furthermore, every $3$-$(\alpha,\delta)$-Sasaki manifold is canonical with constant Reeb Killing function $\beta=2(\delta-2\alpha)$, and the canonical connection has parallel torsion.

\

A second class of canonical almost $3$-contact metric structures is given by \emph{$3$-$\delta$-cosymplectic} structures. They are defined by the conditions
\begin{equation*}
d\eta_i=-2\delta\eta_j\wedge\eta_k,\qquad d\Phi_i=0,
\end{equation*}
for some $\delta\in\R$ and for every even permutation $(i,j,k)$ of $(1,2,3)$.
When $\delta=0$, this is the notion of \emph{$3$-cosymplectic} structure. Every $3$-$\delta$-cosymplectic manifold is hypernormal and locally isometric
to the Riemannian product of a hyperK\"ahler manifold, tangent to the horizontal distribution, and the $3$-dimensional
sphere of constant curvature $\delta^2$, tangent to the vertical distribution. Further, every $3$-$\delta$-cosymplectic manifold is parallel canonical, and the torsion of the canonical connection is given by $T=-2\delta\eta_1\wedge\eta_2\wedge\eta_3$.

\

\subsection{Canonical abelian almost $3$-contact metric structures}

\begin{definition}
	An almost $3$-contact metric structure $(\varphi_i,\xi_i,\eta_i, g)$ on a Lie algebra $\mathfrak g$ is called \emph{(parallel) canonical} if any Lie group $G$ with Lie algebra $\mathfrak g$, endowed with the corresponding left invariant almost $3$-contact metric structure $(\varphi_i,\xi_i,\eta_i, g)$ is (parallel) canonical.
\end{definition}
	
\begin{theorem}\label{proposition-canonical}
	Let $({\mathfrak g},\varphi_i,\xi_i,\eta_i, g)$ be an almost $3$-contact metric Lie algebra with abelian structure. Then the structure is canonical if and only if
		\begin{itemize}
			\item[\textsc{1.}] the vertical subspace $\mathfrak v$ is a subalgebra of $\mathfrak g$, so that $[\xi_i,\xi_j]=2\delta\xi_k$, $\delta\in{\mathbb R}$;
			\item[\textsc{2.}] $\ad_{\xi_i}|_{\mathfrak h}=\frac{\beta}{2}\varphi_i|_{\mathfrak h}$ for every $i=1,2,3$ and for some $\beta\in\mathbb{R}$.
		\end{itemize}
	If the structure is canonical, the canonical connection of a Lie group $G$ with Lie algebra $\g$, endowed with the corresponding left invariant structure, has torsion
\begin{equation}\label{torsion}
T=c+\sum_{i=1}^3\eta_i\wedge d\eta_i+2(\beta +2\delta)\ \eta_1\wedge\eta_2\wedge\eta_3,
\end{equation}
where $c$ is the $3$-form defined on $\h$ by
\begin{equation}\label{c}
 c(X,Y,Z)=-g([X,Y],Z)-g([Y,Z],X)-g([Z,X],Y).
\end{equation}
 	\end{theorem}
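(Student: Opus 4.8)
The plan is to verify directly the four defining conditions of a canonical almost $3$-contact metric structure (Definition~\ref{definition-canonical}) for the left invariant structure on $G$, translating each into an algebraic statement on $\g$ via left invariance: $\mathcal L_{\xi_j}\varphi_i$ becomes the commutator $[\ad_{\xi_j},\varphi_i]$ on $\h$, and $d\eta_i(X,Y)=-\eta_i([X,Y])$. Since an abelian almost $3$-contact structure is hypernormal, each $N_{\varphi_i}=0$, so condition~i) is vacuous. Moreover, exactly as in the proof of Proposition~\ref{characteristic}, the abelian identity $[\varphi_iX,\varphi_iY]=[X,Y]$ yields $-d\Phi_i(\varphi_iX,\varphi_iY,\varphi_iZ)=c(X,Y,Z)$ for every $i$, with $c$ the $3$-form \eqref{c} independent of $i$; hence condition~iii) also holds automatically. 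Thus the entire content reduces to conditions~ii) and~iv).

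For condition~ii) I would use that on a Lie group with left invariant metric $\xi_i$ is Killing if and only if $\ad_{\xi_i}$ is skew-symmetric on $\g$, and decompose this along $\g=\h\oplus\v$. The horizontal block asks $\ad_{\xi_i}|_\h$ to be skew; the mixed block, using \eqref{zeta} together with $[\xi_i,X]\in\h$ from Lemma~\ref{lemma}, gives $g(X,\ad_{\xi_i}\xi_j)=-g(X,\varphi_k\mathcal Z)$, which vanishes for all $X\in\h$ precisely when $\mathcal Z=0$; and once $\mathcal Z=0$ the vertical block is automatically skew. By Proposition~\ref{proposition-subalgebra}, $\mathcal Z=0$ is equivalent to $\v$ being a subalgebra with $[\xi_i,\xi_j]=2\delta\xi_k$, which is condition~\textsc{1.}

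The heart of the argument is condition~iv). Using Lemma~\ref{lemma} I would first record $\ad_{\xi_j}\varphi_i=-\ad_{\xi_k}$ and $\varphi_i\ad_{\xi_j}=\ad_{\xi_k}$ on $\h$ (even permutation $(i,j,k)$), so that $(\mathcal L_{\xi_j}\varphi_i)|_\h=[\ad_{\xi_j},\varphi_i]=-2\ad_{\xi_k}|_\h$, while $\mathcal L_{\xi_i}\varphi_i=0$ by the abelian commutation $[\ad_{\xi_i},\varphi_i]=0$. The key cancellation is that $d\eta_j|_\h$ is $\varphi_i$-invariant for every pair $i,j$ --- a direct consequence of $[\varphi_iX,\varphi_iY]=[X,Y]$, cf.\ \eqref{normal-deta} --- which forces $d\eta_j(X,\varphi_iY)+d\eta_j(\varphi_iX,Y)=0$; the same invariance gives $A_{ii}=0$ automatically. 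Hence $A_{ij}(X,Y)=-2g(\ad_{\xi_k}X,Y)$, and the Reeb Killing condition \eqref{Aij2} becomes $\ad_{\xi_k}|_\h=\tfrac{\beta}{2}\varphi_k|_\h$, i.e.\ condition~\textsc{2.} Since $\varphi_k|_\h$ is $g$-skew, condition~\textsc{2.} in turn implies the horizontal skew-symmetry needed in~ii); combined with $\mathcal Z=0$ this closes the equivalence in both directions.

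Finally, the torsion formula follows by feeding conditions~\textsc{1.} and~\textsc{2.} into the explicit torsion of the canonical connection from Theorem~\ref{theo_canonical}, evaluated on the four types of triples coming from $\g=\h\oplus\v$. On three horizontal vectors $T=-d\Phi_i(\varphi_iX,\varphi_iY,\varphi_iZ)=c$; on two horizontal vectors and one $\xi_i$ one obtains $d\eta_i(X,Y)$, which matches the $\sum_l\eta_l\wedge d\eta_l$ term; the one-horizontal--two-vertical contributions all vanish because $[\xi_i,\xi_j]=2\delta\xi_k\in\v$ and $[\xi_i,X]\in\h$. The only genuine computation is $T(\xi_1,\xi_2,\xi_3)$: the $\sum_l\eta_l\wedge d\eta_l$ term contributes $-6\delta$ and the $\eta_1\wedge\eta_2\wedge\eta_3$ term contributes $2(\beta+2\delta)$, summing to $2(\beta-\delta)$, as required by Theorem~\ref{theo_canonical}, thus confirming \eqref{torsion}. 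I expect the main obstacle to be the careful sign bookkeeping in computing $(\mathcal L_{\xi_j}\varphi_i)|_\h$ and $A_{ij}$; once the $\varphi_i$-invariance of $d\eta_j|_\h$ is established, everything else is routine verification.
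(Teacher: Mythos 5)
Your proposal is correct and follows essentially the same route as the paper's proof: conditions i) and iii) of Definition \ref{definition-canonical} are discharged by hypernormality and the computation from Proposition \ref{characteristic}, the Killing condition ii) is reduced to $\mathcal Z=0$ (equivalently $\v$ being a subalgebra) via the decomposition $\g=\h\oplus\v$ and $[\xi_i,X]\in\h$, condition iv) is reduced to $2\ad_{\xi_k}|_\h=\beta\varphi_k|_\h$ through the identities \eqref{ad-phi1}--\eqref{ad-phi2} giving $A_{ij}=-2g(\ad_{\xi_k}\cdot,\cdot)$, and the torsion formula is read off from Theorem \ref{theo_canonical}. The sign bookkeeping you flag (in particular $-6\delta+2(\beta+2\delta)=2(\beta-\delta)$ for $T(\xi_1,\xi_2,\xi_3)$) checks out.
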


\begin{proof}
	Let us consider a Lie group $G$ with Lie algebra $\g$, and the corresponding left invariant structure $(\varphi_i,\xi_i,\eta_i,g)$.	Since the almost $3$-contact structure is abelian, it is hypernormal, so that $N_{\varphi_i}=0$. Furthermore, a computation analogous to one in Proposition \ref{characteristic} shows that for every $X,Y,Z\in\mathfrak h$ and for every $i=1,2,3$, 
	\[d\Phi_i(\varphi_i X,\varphi_i Y,\varphi_i Z)=g([X,Y],Z)+g([Y,Z],X)+g([Z,X],Y),\] 
	so that condition iii) in Definition \ref{definition-canonical} is satisfied.

	Now, we consider the tensors $A_{ij}$ defined by
	\[ A_{ij}(X,Y)\ :=\g(({\mathcal L}_{\xi_j}\varphi_i)X,Y)+d\eta_j(X,\varphi_i Y)+d\eta_j(\varphi_i X,Y), \]
	for every $X,Y\in{\mathfrak h}$. Since the structure is abelian, we have
	\[d\eta_j(X,\varphi_i Y)+d\eta_j(\varphi_i X,Y)=-\eta_j([X,\varphi_i Y]+[\varphi_i X,Y])=0,\]
	so that $A_{ij}(X,Y)=g(({\mathcal L}_{\xi_j}\varphi_i)X,Y)$. From $\mathrm{ad}_{\xi_i}\circ \varphi_i=\varphi_i\circ\mathrm{ad}_{\xi_i}$, and equations \eqref{ad-phi1}, \eqref{ad-phi2}, we have
	\[A_{ii}(X,Y)=0, \qquad A_{ij}(X,Y)=-A_{ji}(X,Y)=-2g(\mathrm{ad}_{\xi_k}X,Y).\]
	Therefore, since both the endomorphisms $\mathrm{ad}_{\xi_k}$ and $\varphi_k$ preserve the subspace $\mathfrak h$, equation \eqref{Aij2} holds if and only if $2\,\mathrm{ad}_{\xi_k}|_{\mathfrak h}=\beta\varphi_k|_{\mathfrak h}$ for some $\beta\in\mathbb{R}$. In particular, this implies that each endomorphism $\mathrm{ad}_{\xi_i}$ is skew-symmetric on $\mathfrak h$.
		
	The last requirement for the structure to be canonical is that each $\xi_i$ should be Killing, which is equivalent to require that $\mathrm{ad}_{\xi_i}$ is skew-symmetric on $\mathfrak g$. Therefore, for every $X\in{\mathfrak h}$ and $r=1,2,3$, we have
	\begin{align*}
		g([\xi_i,X],\xi_r)+g(X,[\xi_i,\xi_r])=0.
	\end{align*}
	Since $[\xi_i,X]\in{\mathfrak h}$, we deduce that the vertical subspace has to be a subalgebra of ${\mathfrak g}$, which according to Proposition \ref{proposition-subalgebra} is isomorphic to either $\mathbb R^3$ or $\mathfrak{so}(3)$. In particular $[\xi_i,\xi_j]=2\delta\xi_k$ for some $\delta\in\mathbb{R}$, and $\mathrm{ad}_{\xi_i}$ is also skew-symmetric on $\mathfrak v$.
		
	Finally, if the abelian structure $(\varphi_i,\xi_i,\eta_i, g)$ on ${\mathfrak g}$ is canonical, the Lie group $G$ admits a unique metric connection $\nabla$ with skew torsion satisfying
	\[\nabla_X\varphi_i\, =\, \beta(\eta_k(X)\varphi _j -\eta_j(X)\varphi _k) \quad		\forall X\in{\mathfrak g}.\]
    From Theorem \ref{theo_canonical}, the torsion of the canonical connection is given by	
\begin{align*}
		T(X,Y,Z)&=-g([X,Y],Z)-g([Y,Z],X)-g([Z,X],Y),\\
		T(X,Y,\xi_i)&= d\eta_i(X,Y),\\
		T(X,\xi_i,\xi_j)&=0,\\
		T(\xi_1,\xi_2,\xi_3) &=\ 2(\beta -\delta),
		\end{align*}
	for every $X,Y,Z\in{\mathfrak h}$, $i,j=1,2,3$. The equations above are equivalent to \eqref{torsion}.
\end{proof}

	\

\begin{remark}
Conditions 1. and 2. in Theorem \ref{proposition-canonical} can be rephrased as
\begin{equation}\label{canonic}
{\mathcal Z}=0,\qquad 2\psi=-\beta I,
\end{equation}
for some real number $\beta$, which implies that $\psi=0$ or $\psi$ is invertible.
\end{remark}	

One can notice that conditions \eqref{canonic} are independent on the metric. Therefore, one can also give the following

\begin{definition}
An almost $3$-contact Lie algebra $(\g, \varphi_i,\xi_i,\eta_i)$ with abelian structure is called \emph{canonical} if ${\mathcal Z}=0$ and $2\psi=-\beta I$, for some $\beta\in\R$. It is called \emph{parallel canonical} if ${\mathcal Z}=0$ and $\psi=0$.
\end{definition}

In particular, every abelian almost $3$-contact structure $(\varphi_i,\xi_i,\eta_i)$ such that $\v\subset\z(\g)$, is parallel
canonical. Therefore, all the structures belonging to Examples \ref{ex1}, \ref{ex2}, \ref{ex3}, are parallel canonical.

\begin{remark}\label{3ad-abelian}
Recall that every $3$-$(\alpha,\delta)$-Sasaki structure is hypernormal and canonical. Nevertheless, notice that every abelian almost $3$-contact metric structure $(\varphi_i,\xi_i,\eta_i,g)$ on a Lie algebra $\g$ cannot be $3$-$(\alpha,\delta)$-Sasaki. Indeed, if the structure is abelian, taking $i,j=1,2,3$, $i\ne j$, for every horizontal vectors $X,Y\in\h$, we have
\[d\eta_i(\varphi_jX,\varphi_jY)=-\eta_i([\varphi_jX,\varphi_jY])=-\eta_i([X,Y])=d\eta_i(X,Y).\]
On the other hand,
\[\Phi_i(\varphi_jX,\varphi_jY)=g(\varphi_jX,\varphi_i\varphi_jY)=-g(\varphi_jX,\varphi_j\varphi_iY)=-\Phi_i(X,Y),\]
so that \eqref{differential_eta} cannot hold.
\end{remark}

\	

Due to \eqref{canonic}, the description of almost $3$-contact Lie algebras with canonical abelian structure can be obtained by Propositions
\ref{psi1} and \ref{psi2}. In the following, taking a corresponding Lie group $G$ endowed with a compatible left invariant Riemannian metric, we separately examine the parallel and non-parallel cases, determining for each case the canonical connection of the structure. We shall also discuss the parallelism of the torsion.

Analyzing parallel canonical abelian structures ($\mathcal Z=0$, $\psi=0$), we distinguish the cases $\delta=0$ and $\delta\ne 0$.

\

\subsubsection{Parallel canonical abelian structures with $\delta=0$}
\begin{proposition}\label{central-extension}
Let $(\mathfrak g, \varphi_i, \xi_i, \eta_i,g)$ be an almost $3$-contact metric Lie algebra, with a parallel canonical abelian structure such that $\delta=0$. Then $\mathfrak v\subset \mathfrak z(\mathfrak g)$ and $\g$ is the central extension $\g=\v\oplus_\theta \h$ of a Lie algebra $\h$ carrying an abelian hypercomplex structure $J_i$, $i=1,2,3$, for the $J_i$-invariant $\v$-valued $2$-cocycle $\theta$ on $\h$ given by
\[\theta(X,Y)=-\sum_{i=1}^3d\eta_i(X,Y)\xi_i.\]
If $G$ is any Lie group with Lie algebra $\g$, endowed with left invariant structure
$(\varphi_i,\xi_i,\eta_i,g)$, the canonical connection of $G$
is the connection with skew torsion
\begin{equation}\label{T1}
T=c+\sum_{i=1}^3\eta_i\wedge d\eta_i.
\end{equation}
	\end{proposition}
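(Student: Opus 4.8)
The plan is to assemble the structural results already established for the case $\mathcal Z=0$, $\psi=0$, $\delta=0$, to pin down the cocycle explicitly, and finally to read off the torsion from the general formula of Theorem \ref{proposition-canonical}.

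First I would unpack the hypotheses. A parallel canonical abelian structure is, by definition, one with $\mathcal Z=0$ and $\psi=0$; together with the standing assumption $\delta=0$ this places us exactly in the hypotheses of Proposition \ref{psi2}, case \textsc{1.} Thus $\mathfrak v\subset\mathfrak z(\mathfrak g)$ and $\mathfrak g=\mathfrak v\oplus_\theta\mathfrak h$ is the central extension of $\mathfrak h$, equipped with the abelian hypercomplex structure $J_i=\varphi_i|_{\mathfrak h}$, by a $J_i$-invariant $\mathfrak v$-valued $2$-cocycle $\theta$. (Alternatively one checks directly that $\psi=0$ forces $\mathrm{ad}_{\xi_i}|_{\mathfrak h}=0$, since $\varphi_i|_{\mathfrak h}$ is invertible, while \eqref{zeta} with $\mathcal Z=0$, $\delta=0$ gives $[\xi_i,\xi_j]=0$, so every $\xi_i$ is central.)

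Next I would identify $\theta$ explicitly. Using the decomposition \eqref{decomposition2}, for $X,Y\in\mathfrak h$ we have $[X,Y]=\sum_{i=1}^3\sigma_i(X,Y)\xi_i+[X,Y]_{\mathfrak h}$ with $[X,Y]_{\mathfrak h}\in\mathfrak h=\bigcap_j\operatorname{Ker}\eta_j$. Applying $\eta_i$ and using $\eta_i(\xi_i)=1$, $\eta_i(\xi_j)=0$ for $i\neq j$, together with $\eta_i([X,Y]_{\mathfrak h})=0$, gives $\eta_i([X,Y])=\sigma_i(X,Y)$. Since the Chevalley--Eilenberg differential of the $1$-form $\eta_i$ satisfies $d\eta_i(X,Y)=-\eta_i([X,Y])$, we obtain $\sigma_i=-d\eta_i$ on $\mathfrak h$, exactly as in the single almost contact case of Proposition \ref{proposition-extension}. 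Hence $\theta(X,Y)=\sum_i\sigma_i(X,Y)\xi_i=-\sum_{i=1}^3 d\eta_i(X,Y)\xi_i$, as claimed.

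Finally I would compute the torsion. Being parallel canonical, $\psi=0$. Condition \textsc{2.} of Theorem \ref{proposition-canonical} reads $\mathrm{ad}_{\xi_i}|_{\mathfrak h}=\frac{\beta}{2}\varphi_i|_{\mathfrak h}$, and composing with $\varphi_i$ and using $\varphi_i^2|_{\mathfrak h}=-I$ gives $\psi=-\frac{\beta}{2}I$; hence $\beta=0$. The abelian structure satisfies conditions \textsc{1.}--\textsc{2.} of that theorem (with $\delta=0$), so the canonical connection exists and its torsion is given by \eqref{torsion}; substituting $\beta=0$ and $\delta=0$ annihilates the term $2(\beta+2\delta)\,\eta_1\wedge\eta_2\wedge\eta_3$, leaving $T=c+\sum_{i=1}^3\eta_i\wedge d\eta_i$, which is \eqref{T1}. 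There is no serious obstacle here: the statement is essentially a specialization of Proposition \ref{psi2} and Theorem \ref{proposition-canonical}. The only point demanding genuine (though routine) care is the identification $\sigma_i=-d\eta_i$, where one must keep track of the sign convention for $d\eta_i$ on the Lie algebra and use that $[X,Y]_{\mathfrak h}$ lies in $\bigcap_j\operatorname{Ker}\eta_j$; the other thing to confirm is that the hypotheses really reduce to the $\mathfrak v\subset\mathfrak z(\mathfrak g)$ setting before invoking the central-extension machinery.
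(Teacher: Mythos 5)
Your proposal is correct and follows essentially the same route as the paper: the structural part is quoted from Proposition \ref{psi2}, the cocycle is identified by pairing the decomposition \eqref{decomposition2} against the $\xi_i$ (the paper writes this as $g(\theta(X,Y),\xi_i)=\eta_i([X,Y])=-d\eta_i(X,Y)$, which is the same computation as your direct application of $\eta_i$), and the torsion formula is obtained by setting $\beta=0$ and $\delta=0$ in \eqref{torsion}.
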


\begin{proof}
The first part is a consequence of Proposition \ref{psi2}. We only need to remark that, taking into account \eqref{decomposition2}, \eqref{theta1}, and the inner product $g$, for every $X,Y\in\h$, we have
\[g(\theta(X,Y),\xi_i)=g([X,Y],\xi_i)=\eta_i([X,Y])=-d\eta_i(X,Y).\]
Equation \eqref{T1} immediately follows from \eqref{torsion}.
\end{proof}

\

We will compute explicitly the canonical connection $\nabla$ on a central extension as in Proposition \ref{central-extension}. Keeping the notation in the proposition, the bracket of $\g$ applied to elements in $\h$ can be expressed as
\begin{equation}\label{extension}
  [X,Y]=\theta(X,Y)+[X,Y]_\h\in\v\oplus \h,
  \end{equation}
and since $\v$ and $\h$ are orthogonal we have that the $3$-form $c\in\alt^3\h^\ast$ defined in \eqref{c} is given by
\begin{equation*}
	c(X,Y,Z)=-g([X,Y]_\h,Z)-g([Y,Z]_\h,X)-g([Z,X]_\h,Y), \quad X,Y,Z\in\h.
\end{equation*}
Hence, $c$ is the torsion form of the corresponding Bismut connection $\nabla^b$ on $(\h,[\cdot,\cdot]_\h)$.

Recall that the canonical connection preserves the distributions defined by $\v$ and $\h$. In particular, in this parallel case we have $\nabla\xi=0$ for $\xi\in\v$.
Therefore, we only need to compute $g(\nabla_XY,Z)$ and $g(\nabla_\xi X,Y)$
for $X,Y,Z\in\h$ and $\xi\in\v$. Using \eqref{nabla} and \eqref{T1},  direct computations show that
\begin{align*}
	g(\nabla_XY,Z) & = g(\nabla^b_XY,Z) = -g(X,[Y,Z]),\\
	g(\nabla_\xi X,Y)&  = -g([X,Y],\xi) = -g(\theta(X,Y),\xi).
\end{align*}
To sum up, we have

\begin{proposition}\label{idem-bismut}
The canonical connection $\nabla$ on $\g=\v\oplus_\theta \h$ is given by
\begin{equation}\label{like-Bismut}
g(\nabla_XY,Z)=-g(X,[Y,Z]) \mbox{ for any }X,Y,Z\in\g.
\end{equation}
\end{proposition}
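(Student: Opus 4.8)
The plan is to obtain the single formula \eqref{like-Bismut} by checking it against the two partial expressions for $\nabla$ computed just above the statement, exploiting the fact that both sides of $g(\nabla_XY,Z)=-g(X,[Y,Z])$ are $\mathbb R$-trilinear in $(X,Y,Z)$. Since $\g=\v\oplus\h$ is an orthogonal direct sum, it suffices to verify the identity separately in each case where the three arguments are each either horizontal or vertical. Throughout I will use three structural facts from the parallel case with $\delta=0$: the vertical subspace $\v$ lies in $\z(\g)$, so $[\xi,\cdot]=0$ for $\xi\in\v$; the canonical connection preserves the splitting into $\h$ and $\v$, hence $\nabla_XY\in\h$ whenever $Y\in\h$; and $\nabla\xi=0$ for every $\xi\in\v$.

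First I would dispose of the degenerate cases. If $Y\in\v$, then $\nabla_XY=0$ and, since $Y$ is central, $[Y,Z]=0$, so both sides vanish. If $Y\in\h$ but $Z\in\v$, then $\nabla_XY\in\h$ is orthogonal to $Z$, giving $g(\nabla_XY,Z)=0$, while $[Y,Z]=0$ because $Z$ is central; again both sides vanish. This reduces the problem to $Y,Z\in\h$.

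For $Y,Z\in\h$ I would split according to $X$. When $X\in\h$, the identity is exactly the computation $g(\nabla_XY,Z)=g(\nabla^b_XY,Z)=-g(X,[Y,Z])$ recorded above, where one uses that the vertical part $\theta(Y,Z)$ of $[Y,Z]$ is orthogonal to $X\in\h$, so replacing $[Y,Z]$ by its horizontal component changes nothing. When $X=\xi\in\v$, the computation above gives $g(\nabla_\xi Y,Z)=-g(\theta(Y,Z),\xi)$; here the key point is the orthogonal decomposition \eqref{extension}, namely $[Y,Z]=\theta(Y,Z)+[Y,Z]_\h$ with $\theta(Y,Z)\in\v$ and $[Y,Z]_\h\in\h$, which yields $-g(\xi,[Y,Z])=-g(\xi,\theta(Y,Z))=-g(\theta(Y,Z),\xi)$, matching the left-hand side.

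Collecting the cases gives \eqref{like-Bismut} for all $X,Y,Z\in\g$. The only genuinely substantive step is the vertical case $X\in\v$, where one must correctly track the horizontal and vertical components of $[Y,Z]$ against the orthogonality of the splitting; everything else is forced by $\v\subset\z(\g)$ and by $\nabla$ being parallel and distribution-preserving, so I expect no real obstacle beyond this bookkeeping.
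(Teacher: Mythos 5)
Your argument is correct and takes essentially the same route as the paper: the proposition is stated there as a summary (``to sum up'') of the two displayed computations immediately preceding it, combined with the facts that $\nabla$ preserves the splitting $\g=\v\oplus\h$, that $\nabla\xi=0$ for $\xi\in\v$, and that $\v\subset\z(\g)$. Your explicit case-by-case check of the degenerate combinations (where $Y$ or $Z$ is vertical) and of the orthogonality of $\theta(Y,Z)\in\v$ to horizontal arguments is precisely the bookkeeping the paper leaves implicit, so there is nothing to add.
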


This is the same expression of the Bismut connection on a Lie algebra with an abelian hypercomplex structure and hyperhermitian metric (see \cite{DF}).

\begin{remark}
Notice that, being $\nabla{\eta_i}=0$, if $\eta$ is a $1$-form $g$-dual to an element of $\v$, then $\nabla\eta=0$. Further, $\nabla_A \omega \in \h^\ast$, for all $A\in\g$ and $\omega\in\g^\ast$.
\end{remark}

\

We discuss now the parallelism of the torsion.

\

Recall that $\alt^3\g^\ast=\alt^3(\v\oplus \h)^\ast =\alt^3\v^\ast \oplus \alt^2\v^\ast \otimes \h^\ast \oplus \v^\ast\otimes \alt^2\h^\ast \oplus \alt^3\h^\ast$.
The torsion $3$-form $T$ on $\g$ is given by
\[   T=c+\sum_{i=1}^3\eta_i\wedge d\eta_i,   \]
with $c\in\alt^3\h^\ast$ and $\eta_i\wedge d\eta_i\in \v^\ast\otimes \alt^2\h^\ast$ for all $i$, since $d\eta_i(\xi,\cdot)=0$ for any $\xi\in\v$. Therefore, for any $A\in\g$ we have
\[   \nabla_A c\in \alt^3\h^\ast, \quad \nabla_A (\eta_i\wedge d\eta_i)=\eta_i\wedge \nabla_A d\eta_i\in \v^\ast\otimes \alt^2\h^\ast.  \]
Thus, $T$ is parallel if and only if $c$ and $\eta_i\wedge d\eta_i$ are parallel for all $i$, which is equivalent to $\nabla_A d\eta_i=0$, for all $i=1,2,3$. As a consequence, we get

\begin{proposition}
	The torsion $3$-form $T$ is parallel if and only if $\nabla_A c=0$ and $\nabla_A d\eta_i=0$ for all $A\in\g$ and for all $i=1,2,3$.
\end{proposition}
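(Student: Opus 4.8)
The plan is to verify parallelism of $T$ summand by summand in the grading
\[\alt^3\g^\ast=\alt^3\v^\ast \oplus \alt^2\v^\ast \otimes \h^\ast \oplus \v^\ast\otimes \alt^2\h^\ast \oplus \alt^3\h^\ast,\]
exploiting that the two pieces of $T=c+\sum_{i=1}^3\eta_i\wedge d\eta_i$ sit respectively in the last summand and in $\v^\ast\otimes\alt^2\h^\ast$. First I would record that in the parallel case $\beta=0$, so \eqref{derivatives} gives $\nabla\eta_i=0$ for each $i$; hence $\nabla_A(\eta_i\wedge d\eta_i)=\eta_i\wedge\nabla_A d\eta_i$ for every $A\in\g$, and it only remains to control $\nabla_A d\eta_i$.

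Next I would check that $\nabla_A d\eta_i\in\alt^2\h^\ast$. Since the canonical connection preserves the distributions $\h$ and $\v$, for $\xi\in\v$ and $X\in\g$ one has $\nabla_A\xi\in\v$, so
\[(\nabla_A d\eta_i)(\xi,X)=-d\eta_i(\nabla_A\xi,X)-d\eta_i(\xi,\nabla_A X)=0,\]
using $d\eta_i(\v,\cdot)=0$. Together with $\nabla_A c\in\alt^3\h^\ast$ (already noted), this yields the decomposition $\nabla_A T=\nabla_A c+\sum_{i}\eta_i\wedge\nabla_A d\eta_i$ with the two summands lying in $\alt^3\h^\ast$ and $\v^\ast\otimes\alt^2\h^\ast$ respectively. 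As these are complementary summands, $\nabla_A T=0$ is equivalent to the simultaneous vanishing $\nabla_A c=0$ and $\sum_{i}\eta_i\wedge\nabla_A d\eta_i=0$.

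The only step needing a short argument is that $\sum_{i}\eta_i\wedge\nabla_A d\eta_i=0$ forces each $\nabla_A d\eta_i=0$. I would prove this by evaluating the sum on a triple $(\xi_j,X,Y)$ with $X,Y\in\h$: since $\eta_i(X)=\eta_i(Y)=0$ and $\eta_i(\xi_j)=\delta_{ij}$, only the $i=j$ contribution survives and equals $(\nabla_A d\eta_j)(X,Y)$. Hence the vanishing of the sum is equivalent to $(\nabla_A d\eta_j)|_{\h\times\h}=0$ for all $j$, which, as $\nabla_A d\eta_j$ is already horizontal, means $\nabla_A d\eta_j=0$. Quantifying over all $A\in\g$ then yields the stated equivalence. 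I do not expect a genuine obstacle: the content is entirely the bookkeeping of which graded component each term occupies, together with the elementary wedge evaluation isolating $\nabla_A d\eta_j$.
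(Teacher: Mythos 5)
Your proof is correct and follows essentially the same route as the paper: decompose $\alt^3\g^\ast$ into graded pieces, use $\nabla\eta_i=0$ to get $\nabla_A(\eta_i\wedge d\eta_i)=\eta_i\wedge\nabla_A d\eta_i$, and observe that $\nabla_A c$ and the $\eta_i\wedge\nabla_A d\eta_i$ live in complementary summands. Your contraction with $(\xi_j,X,Y)$ to extract each individual $\nabla_A d\eta_j$ from the vanishing of the sum is a step the paper leaves implicit, and it is a worthwhile addition.
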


In particular, if $T$ is parallel then the Bismut connection on $\h$ has parallel torsion.

\

\begin{example}\label{different}
 Consider the Lie algebras described in Examples \ref{ex1}, \ref{ex2}, \ref{ex3}, all admitting a parallel canonical abelian structure with $\delta=0$, obtained as central extensions of the abelian Lie algebra $\R^{4n}$. Take the inner product $g_\lambda$ on $\mathfrak g$ with respect to which the basis of vectors $\xi_i,\tau_l$ is orthonormal. Then $({\mathfrak g},\varphi_i,\xi_i,\eta_i,g_\lambda)$ is an almost $3$-contact metric Lie algebra with abelian structure. If $G$ is any Lie group with Lie algebra $\mathfrak g$, endowed with the left invariant structure
	$(\varphi_i,\xi_i,\eta_i,g_\lambda)$, being $c=0$, the torsion of the canonical connection of $G$ is
\[T=\sum_{i=1}^3\eta_i\wedge d\eta_i.\]
One can notice that the Lie algebra described in Example \ref{ex1} is isomorphic to the Lie algebra described in \cite[Example 2.3.2]{AgDi} via the isomorphism interchanging $\tau_{n+r}$ and $\tau_{2n+r}$. Therefore, two different left invariant almost $3$-contact metric structures are provided on the quaternionic Heisenberg group. In fact, the structure in \cite[Example 2.3.2]{AgDi} is a degenerate $3$-$(\alpha,\delta)$-Sasaki structure, while the structure belonging to Example \ref{ex1} is not $3$-$(\alpha,\delta)$-Sasaki, since it is abelian (see Remark \ref{3ad-abelian}).

Referring to these three examples, endowed with the compatible metric $g_\lambda$, we show that Example \ref{ex3} is the only one for which the canonical connection has parallel torsion. Indeed, in these cases, we have that $\nabla T=0$ if and only if $\nabla d\eta_i=0$. In all the three examples,
\[d\eta_1=-\lambda\sum_{r=1}^n(\theta_r\wedge\theta_{2n+r}-\theta_{3n+r}\wedge\theta_{n+r}).\]
As regards Examples \ref{ex1} and \ref{ex2}, using \eqref{like-Bismut}, one can easily check that
\[\nabla_{\xi_2}\tau_r=-\lambda\tau_{n+r},\qquad \nabla_{\xi_2}\tau_{n+r}=\lambda\tau_{r},\qquad \nabla_{\xi_2}\tau_{2n+r}=\lambda\tau_{3n+r},\qquad \nabla_{\xi_2}\tau_{3n+r}=-\lambda\tau_{2n+r},\]
and thus on the dual $1$-forms, we have
\[\nabla_{\xi_2}\theta_r=-\lambda\theta_{n+r},\qquad \nabla_{\xi_2}\theta_{n+r}=\lambda\theta_{r},\qquad \nabla_{\xi_2}\theta_{2n+r}=\lambda\theta_{3n+r},\qquad \nabla_{\xi_2}\theta_{3n+r}=-\lambda\theta_{2n+r},\]
so that we get
\[\nabla_{\xi_2}d\eta_1=-2\lambda^2\sum_{r=1}^n(\theta_r\wedge\theta_{3n+r}-\theta_{n+r}\wedge\theta_{2n+r})\ne0.\]
As regards Example \ref{ex3}, the only non-vanishing covariant derivatives of the canonical connection are
\[\nabla_{\xi_1}\tau_r=-\lambda\tau_{2n+r},\qquad \nabla_{\xi_1}\tau_{2n+r}=\lambda\tau_{r},\qquad \nabla_{\xi_1}\tau_{n+r}=-\lambda\tau_{3n+r},\qquad \nabla_{\xi_1}\tau_{3n+r}=\lambda\tau_{n+r},\]
and we have analogous equations on the dual $1$-forms, which imply that $\nabla_{\xi_1}d\eta_1=0$. Together with the fact that $d\eta_2=d\eta_3=0$, this implies that $\nabla T=0$.
\end{example}

\medskip

Recall that for a metric connection $\nabla$ with skew torsion $T$ the parallelism of the torsion is a sufficient condition for the Ricci tensor to be symmetric. Nevertheless, we shall see that in all the three cases discussed in Example \ref{different}, the Ricci tensor is symmetric, which is in fact equivalent to the torsion form being coclosed.
\begin{proposition}
If the Lie algebra $\h$ is abelian, the Ricci tensor of the canonical connection on $\g=\v\oplus_\theta \h$ is symmetric.
\end{proposition}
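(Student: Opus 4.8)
The plan is to compute the Ricci tensor of $\nabla$ directly from its explicit description in Proposition \ref{idem-bismut}, exploiting that the hypothesis that $\h$ is abelian (together with $\delta=0$ and $\psi=0$, which hold throughout this parallel case) collapses the bracket of $\g$ to a two-step nilpotent one with $\v\subset\z(\g)$. Concretely, the only nonzero brackets are $[X,Y]=\theta(X,Y)=-\sum_{i=1}^3 d\eta_i(X,Y)\,\xi_i\in\v$ for $X,Y\in\h$, so in particular $c=0$ and $T=\sum_{i=1}^3\eta_i\wedge d\eta_i$. Fixing a $g$-orthonormal basis $\{e_a\}$ of $\h$ and letting $F_i\in\mathrm{End}(\h)$ be the skew-symmetric endomorphism defined by $g(F_iX,Y)=d\eta_i(X,Y)$, the formula $g(\nabla_AB,C)=-g(A,[B,C])$ together with $[B,C]\in\v$ yields
\[\nabla_XB=0\quad(X\in\h),\qquad \nabla_{\xi_i}X=F_iX\quad(X\in\h),\qquad \nabla_{\xi_i}\xi_j=0.\]
Thus $\nabla$ annihilates every horizontal direction, while $\nabla_{\xi_i}$ preserves $\h$, acts there skew-symmetrically, and kills $\v$.

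The second step is to read off the curvature $R(A,B)=[\nabla_A,\nabla_B]-\nabla_{[A,B]}$. Because $\nabla_X=0$ for $X\in\h$ and $[\xi_i,\xi_j]=0$, one finds for $X,Y\in\h$
\[R(X,\xi_i)=0,\qquad R(X,Y)=\sum_{i=1}^3 d\eta_i(X,Y)\,\nabla_{\xi_i},\qquad R(\xi_i,\xi_j)=[F_i,F_j]\ \text{ on }\h,\]
while $R(\xi_i,\xi_j)$ vanishes on $\v$. In particular every curvature operator maps $\h$ into $\h$ and $\v$ into $0$.

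The third step is to contract, with $\operatorname{Ric}^\nabla(A,B)=\sum_E g(R(E,A)B,E)$ over the orthonormal basis $\{e_a\}\cup\{\xi_i\}$. Since every $R(E,A)$ kills $\v$, and since $R(e_a,\xi_i)=0$ while $R(\xi_k,\xi_i)$ maps $\h$ to $\h$, all blocks involving a vertical argument vanish at once: $\operatorname{Ric}^\nabla(X,\xi_j)=\operatorname{Ric}^\nabla(\xi_j,X)=\operatorname{Ric}^\nabla(\xi_i,\xi_j)=0$ for $X\in\h$. For the horizontal block only $R(e_a,X)$ contributes, and
\[\operatorname{Ric}^\nabla(X,Y)=\sum_a g(R(e_a,X)Y,e_a)=\sum_{i=1}^3\sum_a d\eta_i(e_a,X)\,g(F_iY,e_a)=-\sum_{i=1}^3 g(F_iX,F_iY),\]
which is manifestly symmetric in $X$ and $Y$. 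Hence $\operatorname{Ric}^\nabla$ is symmetric.

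There is no genuine obstacle beyond careful bookkeeping of the curvature terms; the decisive simplification is that the canonical connection annihilates all horizontal directions, so that the only surviving Ricci contribution is the sum of squares $-\sum_i g(F_iX,F_iY)$ of the skew operators $F_i$. As an alternative one could instead invoke \cite{IP}, reducing symmetry of $\operatorname{Ric}^\nabla$ to the coclosedness $\delta T=0$ of $T=\sum_{i}\eta_i\wedge d\eta_i$; but the direct curvature computation above seems more transparent and self-contained.
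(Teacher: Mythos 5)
Your proof is correct and follows essentially the same route as the paper: both arguments rest on the explicit formula $g(\nabla_AB,C)=-g(A,[B,C])$, the vanishing of $\nabla$ in horizontal directions when $\h$ is abelian, and the resulting collapse of the curvature, so that the trace becomes a manifestly symmetric expression (your $-\sum_i g(F_iX,F_iY)$ coincides with the paper's $-\sum_a g([e_a,A],[e_a,B])$). The only difference is that you carry out the bookkeeping more explicitly, identifying the operators $F_i$ and checking the mixed and vertical blocks separately, whereas the paper disposes of the vertical trace in one line via $\nabla\xi_i=0$.
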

\begin{proof}
For every $A,B\in\g$, the Ricci tensor of the canonical connection is given by
 \[Ric(A,B)=\sum_{i=1}^3g(R(\xi_i,A)B,\xi_i)+\sum_{i=1}^{4n}g(R(e_i,A)B,e_i),\]
 where $\{e_i\}$ is an orthonormal basis of $\h$. Since $\nabla\xi_i=0$, we have
 \[g(R(\xi_i,A)B,\xi_i)=-g(R(\xi_i,A)\xi_i,B)=0.\]
 On the other hand, since $\h$ is abelian, from \eqref{like-Bismut} and \eqref{extension}, we have $\nabla_{e_i}=0$ and thus
  \[Ric(A,B)=-\sum_{i=1}^{4n}g(\nabla_{[e_i,A]}B,e_i)=\sum_{i=1}^{4n}g([e_i,A],[B,e_i]),\]
which implies that $Ric(A,B)=Ric(B,A)$.
\end{proof}

\medskip

\begin{remark}
	The Lie groups in Example \ref{different} are all nilpotent and have rational structure constants, therefore due to Malcev's criterion (\cite{Ma}), they admit co-compact discrete subgroups. The corresponding nilmanifolds admit an induced parallel canonical almost 3-contact metric structure.
\end{remark}

\
\subsubsection{Parallel canonical abelian structures with $\delta\ne0$}
\begin{proposition}\label{parallel-canonical2}
Let $(\mathfrak g, \varphi_i, \xi_i, \eta_i,g)$ be an almost $3$-contact metric Lie algebra, with a parallel canonical abelian
structure such that $\delta\ne0$. Then $\g=\v\times \h$, with $\v\cong \mathfrak{so}(3)$ and $\h$ carrying an abelian hypercomplex structure $J_i$, $i=1,2,3$.
If $G$ is any Lie group with Lie algebra $\mathfrak g$, endowed with left invariant structure
$(\varphi_i,\xi_i,\eta_i,g)$, the canonical connection of $G$
is the connection with skew torsion
\begin{equation}\label{T2}
T=c-2\delta\eta_1\wedge\eta_2\wedge\eta_3.
\end{equation}
	\end{proposition}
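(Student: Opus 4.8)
The plan is to establish the two assertions in turn: the algebraic splitting of $\g$, and then the explicit torsion of the canonical connection.

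The splitting is immediate. By definition of a \emph{parallel canonical} structure we have $\mathcal Z=0$ and $\psi=0$, and since $\delta\ne0$ these are exactly the hypotheses of the second case of Proposition \ref{psi2}. That proposition already yields $\g=\v\times\h$ with $\v\cong\mathfrak{so}(3)$ and $\h$ carrying the abelian hypercomplex structure $J_i=\varphi_i|_\h$, so here nothing beyond invoking it is needed.

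For the torsion I would start from the general formula \eqref{torsion} of Theorem \ref{proposition-canonical},
\[ T=c+\sum_{i=1}^3\eta_i\wedge d\eta_i+2(\beta+2\delta)\,\eta_1\wedge\eta_2\wedge\eta_3. \]
Because the structure is parallel, $\psi=0$, and by \eqref{canonic} this forces $\beta=0$, so the last summand becomes $4\delta\,\eta_1\wedge\eta_2\wedge\eta_3$. It then remains only to compute $\sum_i\eta_i\wedge d\eta_i$. Here I would exploit the product structure $\g=\v\times\h$: the sole nonzero brackets are $[\xi_i,\xi_j]=2\delta\xi_k$ (which is \eqref{zeta} with $\mathcal Z=0$) together with the internal brackets of $\h$, while $[\v,\h]=0$ and $[X,Y]\in\h$ for $X,Y\in\h$. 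Using $d\eta_i(A,B)=-\eta_i([A,B])$, one checks that $d\eta_i$ vanishes on $\h\times\h$ (since $[X,Y]\in\h=\bigcap_l\operatorname{Ker}\eta_l$) and on $\v\times\h$ (since $[\xi_l,X]=0$), and that $d\eta_i(\xi_j,\xi_k)=-2\delta$ for the even permutation $(i,j,k)$. Hence $d\eta_i=-2\delta\,\eta_j\wedge\eta_k$.

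Substituting gives $\eta_i\wedge d\eta_i=-2\delta\,\eta_1\wedge\eta_2\wedge\eta_3$ for each $i$, so $\sum_{i=1}^3\eta_i\wedge d\eta_i=-6\delta\,\eta_1\wedge\eta_2\wedge\eta_3$; combining with the $4\delta\,\eta_1\wedge\eta_2\wedge\eta_3$ term yields $T=c-2\delta\,\eta_1\wedge\eta_2\wedge\eta_3$, as claimed. The argument is almost entirely a matter of assembling earlier results; the only point requiring care is the sign-and-coefficient bookkeeping when the $-6\delta$ coming from $\sum_i\eta_i\wedge d\eta_i$ cancels against the $+4\delta$ hidden in the $2(\beta+2\delta)$ term of \eqref{torsion}.
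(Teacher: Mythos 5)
Your proof is correct and follows the same route as the paper: invoke Proposition \ref{psi2} for the splitting, then specialize the torsion formula \eqref{torsion} of Theorem \ref{proposition-canonical} using $\beta=0$ and $d\eta_i=-2\delta\,\eta_j\wedge\eta_k$. The paper states this in one line; you have merely filled in the same bookkeeping ($-6\delta+4\delta=-2\delta$), which checks out.
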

\begin{proof}
This is again an immediate consequence of Proposition \ref{psi2} and Theorem \ref{proposition-canonical}. Equation \eqref{T2} follows from \eqref{torsion}, taking into account that in this case $d\eta_i=-2\delta\eta_j\wedge\eta_k$ for every even permutation $(i,j,k)$ of $(1,2,3)$.
\end{proof}

In the assumptions of the Proposition above, the Lie subalgebra $\h$ of $\g$ is endowed with the HKT structure $(J_i,g)$, $i=1,2,3$. The $3$-form $c$ coincides with the torsion of the Bismut connection $\nabla^b$ of any Lie group $H$ with Lie algebra $\h$, and left invariant structure $(J_i,g)$, $i=1,2,3$.

Considering the canonical connection $\nabla$ and applying \eqref{nabla} and \eqref{T2}, one immediately verifies that
\[	g(\nabla_XY,Z)=g(\nabla^b_XY,Z)=-g(X,[Y,Z]),\qquad g(\nabla_\xi X,Y)=0\]
for $X,Y,Z\in\h$ and $\xi\in\v$. Being also $\nabla\xi=0$, we have that $\nabla_XY=\nabla^b_XY\in\h$ for $X,Y\in\h$ and all the other covariant derivatives vanish.

Therefore, taking into account \eqref{T2}, since $\nabla\eta_i=0$, we have that

\begin{proposition}
	The torsion $3$-form $T$ on $\g=\mathfrak{so}(3)\times \h$ is parallel if and only if $\nabla_X c=0$ for all $X\in\h$, or equivalently $\nabla^bc=0$, that is the Bismut connection on $\h$ has parallel torsion.
\end{proposition}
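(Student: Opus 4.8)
The plan is to read everything off from the explicit description of the canonical connection established immediately above, namely that on $\g=\mathfrak{so}(3)\times\h$ the only nonvanishing covariant derivatives are $\nabla_XY=\nabla^b_XY\in\h$ for $X,Y\in\h$, while $\nabla_\xi$ annihilates all of $\g$ and $\nabla\xi_i=\nabla\eta_i=0$. First I would decompose $T=c-2\delta\,\eta_1\wedge\eta_2\wedge\eta_3$ with $c\in\alt^3\h^\ast$, and observe that because we are in the parallel case each $\eta_i$ is $\nabla$-parallel; hence the vertical summand $-2\delta\,\eta_1\wedge\eta_2\wedge\eta_3$ is $\nabla$-parallel and $\nabla_AT=\nabla_Ac$ for every $A\in\g$. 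Thus $T$ is parallel if and only if $c$ is.

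Next I would compute $\nabla_A c$ separately for $A\in\v$ and $A\in\h$. For $A=\xi\in\v$ the operator $\nabla_\xi$ is identically zero on $\g$, and since $c$ is left invariant (so $\xi(c(\cdot,\cdot,\cdot))=0$) one gets $\nabla_\xi c=0$ with no hypothesis. For $A=X\in\h$ I would use that $\nabla_X$ respects the splitting $\g=\v\oplus\h$: it sends $\h$ into $\h$ via $\nabla^b$ and kills $\v$. Since $c$ vanishes on any vertical argument, $(\nabla_Xc)(\xi,\cdot,\cdot)=0$ for $\xi\in\v$, while on horizontal arguments $(\nabla_Xc)(Y,Z,W)=-c(\nabla^b_XY,Z,W)-c(Y,\nabla^b_XZ,W)-c(Y,Z,\nabla^b_XW)=(\nabla^b_Xc)(Y,Z,W)$. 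Therefore $\nabla_Xc=\nabla^b_Xc$ as elements of $\alt^3\h^\ast$.

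Combining the two cases, $\nabla c=0$ holds if and only if $\nabla_Xc=0$ for all $X\in\h$, which is precisely $\nabla^b c=0$; the vertical directions contribute nothing. This delivers all three equivalences simultaneously. I do not expect a genuine obstacle here: the argument is entirely bookkeeping, and the only point needing a little care is checking that both the vertical term of $T$ and all vertical covariant derivatives drop out, which follows cleanly from $\nabla\eta_i=0$ and $\nabla_\xi=0$.
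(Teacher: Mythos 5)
Your argument is correct and is essentially the paper's own: the text preceding the proposition establishes exactly that $\nabla_XY=\nabla^b_XY$ for $X,Y\in\h$, that all other covariant derivatives vanish, and that $\nabla\eta_i=0$, from which the vertical summand of $T$ is parallel and $\nabla_Ac$ reduces to $\nabla^b_Xc$ for horizontal $X$. Your write-up just makes the bookkeeping in that paragraph explicit; there is no gap and no difference in method.
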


\begin{remark} In the assumptions of Proposition \ref{parallel-canonical2}, if the Lie algebra $\mathfrak h$ is abelian, then $c=0$ and the hyperhermitian structure $(J_i,g)$ is hyperK\"ahler. In this case, if $G$ is the simply connected Lie group with Lie algebra $\g$, then it is isomorphic to $\operatorname{SU}(2)\times \mathbb{R}^{4n}$, and the structure $(\varphi_i,\xi_i,\eta_i,g)$ on $G$ is $3$-$\delta$-cosymplectic.
\end{remark}

\
	
\subsubsection{Non-parallel canonical abelian structures}
\begin{proposition}\label{semidirect}
Let $(\g,\varphi_i,\xi_i,\eta_i,g)$ be an almost $3$-contact metric Lie algebra endowed with a non-parallel canonical abelian structure, i.e. ${\mathcal
Z}=0$ and
$2\psi=-\beta I$, with $\beta\ne 0$. Then $\beta=2\delta$ and $\g\cong\mathfrak{so}(3)\ltimes \R^{4n}$, with non zero brackets given by
	\begin{equation}\label{bracket}
	[\xi_i,\xi_j]=2\delta \xi_k, \qquad [\xi_i,X]= \delta \varphi_i X,
	\end{equation}
	for any even permutation $(i,j,k)$ of $(1,2,3)$. If $G$ is any Lie group with Lie algebra $\g$, endowed with the left invariant structure
$(\varphi_i,\xi_i,\eta_i,g)$, the canonical connection of $G$ is the connection with skew torsion
\begin{equation}\label{T3}
T=2\delta \eta_1\wedge\eta_2\wedge\eta_3.
\end{equation}
\end{proposition}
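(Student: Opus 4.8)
The plan is to recognize that the hypotheses immediately force $\psi$ to be invertible, so that the algebraic structure is already pinned down by Proposition \ref{psi1}. First I would note that $2\psi=-\beta I$ with $\beta\ne 0$ gives $\psi=-\tfrac{\beta}{2}I$, which is invertible; together with ${\mathcal Z}=0$ this places us exactly in the situation of Proposition \ref{psi1}. That proposition yields $\v\cong\mathfrak{so}(3)$, $\h$ an abelian ideal, and the brackets $[\xi_i,\xi_j]=2\delta\xi_k$, $[\xi_i,X]=\delta\varphi_iX$ with $\delta\ne 0$; moreover its proof shows that $\psi=-\delta I|_{\h}$. Comparing $\psi=-\tfrac{\beta}{2}I$ with $\psi=-\delta I$ gives $\beta=2\delta$, establishing the relation between the two constants and identifying the Lie algebra as $\mathfrak{so}(3)\ltimes\R^{4n}$ with brackets \eqref{bracket}.

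For the torsion I would invoke the general formula \eqref{torsion} from Theorem \ref{proposition-canonical} and simply evaluate each of its summands on the Lie algebra just described. Since $\h$ is abelian, the bracket of any two horizontal vectors vanishes, so the $3$-form $c$ defined in \eqref{c} is identically zero. From the brackets \eqref{bracket} one computes $d\eta_i(A,B)=-\eta_i([A,B])$: it vanishes on $\h\times\h$ and on $\v\times\h$ (because $[\xi_j,X]=\delta\varphi_jX\in\h$), while $d\eta_i(\xi_j,\xi_k)=-2\delta$ for an even permutation $(i,j,k)$, so that $d\eta_i=-2\delta\,\eta_j\wedge\eta_k$. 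Hence $\eta_i\wedge d\eta_i=-2\delta\,\eta_1\wedge\eta_2\wedge\eta_3$ for each $i$, and $\sum_{i=1}^3\eta_i\wedge d\eta_i=-6\delta\,\eta_1\wedge\eta_2\wedge\eta_3$. Substituting $c=0$, this sum, and $\beta=2\delta$ into \eqref{torsion} gives
\[
T=\bigl(-6\delta+2(\beta+2\delta)\bigr)\,\eta_1\wedge\eta_2\wedge\eta_3=(-6\delta+8\delta)\,\eta_1\wedge\eta_2\wedge\eta_3=2\delta\,\eta_1\wedge\eta_2\wedge\eta_3,
\]
which is exactly \eqref{T3}.

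I expect no genuine obstacle here: the statement is essentially an assembly of Proposition \ref{psi1} (for the algebraic classification and the value of $\psi$) and Theorem \ref{proposition-canonical} (for the torsion formula), so the only real work is the careful bookkeeping of signs and coefficients when computing $d\eta_i$ and collecting the coefficient of $\eta_1\wedge\eta_2\wedge\eta_3$. As a consistency check I would compare the vertical component with the entry $T(\xi_1,\xi_2,\xi_3)=2(\beta-\delta)$ from Theorem \ref{theo_canonical}: with $\beta=2\delta$ this equals $2\delta$, matching the coefficient obtained above.
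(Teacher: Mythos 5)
Your proof is correct and follows essentially the same route as the paper: invoke Proposition \ref{psi1} for the algebraic structure (obtaining $\psi=-\delta I$, hence $\beta=2\delta$), then substitute $c=0$ and $d\eta_i=-2\delta\,\eta_j\wedge\eta_k$ into the torsion formula \eqref{torsion} of Theorem \ref{proposition-canonical}. Your sign and coefficient bookkeeping, as well as the consistency check against $T(\xi_1,\xi_2,\xi_3)=2(\beta-\delta)$, are all accurate.
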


\begin{proof}
This is a consequence of Proposition \ref{psi1} and Theorem \ref{proposition-canonical}. In particular, even in this case $d\eta_i=-2\delta\eta_j\wedge\eta_k$, for every even permutation $(i,j,k)$ of $(1,2,3)$, and \eqref{T3} follows from \eqref{torsion}.
\end{proof}

\medskip

In the case of Proposition \ref{semidirect}, the canonical connection $\nabla$ has parallel torsion. Indeed, by the second equation in \eqref{derivatives},
\[\nabla_X\eta_i=0,\qquad \nabla_{\xi_j}\eta_i=-2\delta\eta_k,\qquad \nabla_{\xi_k}\eta_i=2\delta\eta_j\]
for every $X\in \h$ and for every even permutation $(i,j,k)$ of $(1,2,3)$. Therefore, taking into account \eqref{T3}, we have $\nabla T=0$.

Further, under the assumption of the Proposition above, using \eqref{bracket}, one can easily check that the fundamental $2$-forms of the structure satisfy
\[d\Phi_i=-2\delta(\eta_j\wedge\Phi_k-\eta_k\wedge\Phi_j)\]
for every even permutation $(i,j,k)$ of $(1,2,3)$, which implies that in this case the structure is not $3$-$\delta$-cosymplectic.
	
\	
	
\section{The Lie algebra $\mathfrak{so}(3)\ltimes \R^{4n}$}\label{compact}

In this section we determine the simply connected Lie group associated to the Lie algebra $\mathfrak{so}(3)\ltimes \R^{4n}$ which, according to Proposition \ref{semidirect}, carries a non-parallel canonical abelian structure. Later we show that for any $n$ this Lie group admits discrete co-compact subgroups, and therefore the associated compact quotients inherit an abelian almost $3$-contact metric structure with a non-parallel canonical connection. We also study the first homology group of some of these quotients.

\medskip

Let $\{J_1,J_2,J_3\}$ be a hypercomplex structure on the abelian Lie algebra $\R^{4n}$ and let us consider the Lie algebra $\mathfrak{so}(3)$ generated by $\{\xi_1,\xi_2,\xi_3\}$ with Lie brackets given as usual by $[\xi_i,\xi_j]=2\delta \xi_k$ where $(i,j,k)$ is an even permutation of $(1,2,3)$ and $\delta\neq 0$. Then there exists a representation $\rho$ of $\mathfrak{so}(3)$ on $\R^{4n}$ given by
\begin{equation}\label{rho}
 \rho:\mathfrak{so}(3)\to \mathfrak{gl}(4n,\R), \qquad \rho(\xi_i)=\delta J_i, \quad i=1,2,3.
\end{equation}
We denote by $\g_\rho$ the Lie algebra $\g_\rho=\mathfrak{so}(3)\ltimes_\rho \R^{4n}$. Then it is straightforward to verify that $\g_{\rho}$ admits an abelian almost $3$-contact structure with invertible endomorphism $\psi$, and the Lie bracket on $\g_\rho$ is given as in \eqref{beta-delta}.

\begin{proposition}
Let $\{J_1,J_2,J_3\}$ and $\{J_1',J_2',J_3'\}$ be two hypercomplex structures on $\R^{4n}$, with corresponding representations $\rho$, $\rho'$ of $\mathfrak{so}(3)$ on $\R^{4n}$ as in \eqref{rho}. Then the Lie algebras $\g_\rho$ and $\g_{\rho'}$ are isomorphic.
\end{proposition}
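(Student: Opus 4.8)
The plan is to build an explicit Lie algebra isomorphism $\Phi\colon\g_\rho\to\g_{\rho'}$ that is the identity on the $\mathfrak{so}(3)$-factor and an appropriate $\R$-linear bijection $f\colon\R^{4n}\to\R^{4n}$ on the abelian ideal. Since the only nontrivial mixed bracket is $[\xi_i,X]=\rho(\xi_i)X=\delta J_iX$ in $\g_\rho$ (resp. $\delta J_i'X$ in $\g_{\rho'}$), the map $\Phi:=\mathrm{Id}\oplus f$ will be a homomorphism precisely when $f$ intertwines the two hypercomplex structures, that is $f\circ J_i=J_i'\circ f$ for $i=1,2,3$. Thus the whole problem reduces to producing such an intertwiner, and the identity on $\mathfrak{so}(3)$ is legitimate because both Lie algebras share the same $\delta$, so $[\xi_i,\xi_j]=2\delta\xi_k$ agrees on the two sides.

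First I would observe that a hypercomplex structure $\{J_1,J_2,J_3\}$ on $\R^{4n}$ is exactly the data of a left $\H$-module structure: setting $i\cdot v:=J_1v$, $j\cdot v:=J_2v$, $k\cdot v:=J_3v$, the relations $J_1J_2=J_3=-J_2J_1$ together with $J_i^2=-I$ are equivalent to associativity of this action (for instance $i\cdot(j\cdot v)=J_1J_2v$ must equal $(ij)\cdot v=k\cdot v=J_3v$). Because $\H$ is a division ring, every finitely generated $\H$-module is free, so both $(\R^{4n},\{J_i\})$ and $(\R^{4n},\{J_i'\})$ are isomorphic to $\H^n$ as left $\H$-modules, and hence to each other. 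By definition a module isomorphism is an $\R$-linear bijection $f$ with $f(a\cdot v)=a\cdot'f(v)$ for all $a\in\H$; specializing to $a=i,j,k$ gives exactly the required relations $f\circ J_i=J_i'\circ f$.

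With $f$ in hand I would verify the three families of brackets. The relation $\Phi([\xi_i,\xi_j])=2\delta\xi_k=[\Phi\xi_i,\Phi\xi_j]$ is immediate; the mixed bracket $\Phi([\xi_i,X])=\delta f(J_iX)=\delta J_i'f(X)=[\xi_i,f(X)]=[\Phi\xi_i,\Phi X]$ is where the intertwining relation is used; and the ideal bracket is preserved because both copies of $\R^{4n}$ are abelian. Since $f$ is a bijection and $\Phi$ is the identity on $\mathfrak{so}(3)$, the map $\Phi$ is a linear isomorphism, hence a Lie algebra isomorphism.

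The only genuinely substantive point is the existence of the intertwiner $f$, and the main thing to get right is the bookkeeping that promotes the hypercomplex relations to a genuine associative $\H$-action — in particular that $J_1J_2=J_3$ is precisely what yields $i\cdot(j\cdot v)=(ij)\cdot v$ — so that one is entitled to invoke freeness of modules over the division ring $\H$. Everything else is a routine check of bracket relations, and no metric compatibility is needed, since only a Lie algebra isomorphism is claimed.
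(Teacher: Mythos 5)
Your proof is correct and follows essentially the same route as the paper: both reduce the problem to producing an $\R$-linear intertwiner $f$ with $f\circ J_i=J_i'\circ f$ and then extend it by the identity on $\mathfrak{so}(3)$ to get the Lie algebra isomorphism. Your appeal to freeness of finitely generated modules over the division ring $\H$ is just an abstract repackaging of the paper's explicit construction of quaternionic bases $\{W_i,J_1W_i,J_2W_i,J_3W_i\}$, so the two arguments coincide in substance.
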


\begin{proof}
It is easy to verify that there exist linearly independent vectors $W_1,\ldots,W_n\in\R ^{4n}$ such that $\bigcup_{i=1}^n \{W_i,J_1W_i,J_2W_i,J_3W_i\}$ is a basis of $\R^{4n}$. Analogously, there exist linearly independent vectors $W_1',\ldots,W_n'\in\R ^{4n}$ such that $\bigcup_{i=1}^n \{W_i',J_1'W_i',J_2'W_i',J_3'W_i'\}$ is a basis of $\R^{4n}$. Define $T\in \operatorname{GL}(4n,\R)$ by
\[TW_i=W_i',\quad T(J_1 W_i)=J_1'W_i', \quad T(J_2 W_i)=J_2'W_i',\quad T(J_3 W_i)=J_3'W_i',\]
for all $i$. Therefore $TJ_i=J_i'T$ for all $i$, i.e. $T$ is a intertwining operator for the representations $\rho$ and $\rho'$. Using this property it is straightforward to verify that the linear map $\g_\rho\to \g_{\rho'}$ given by $(\xi,X)\mapsto (\xi, TX)$, for $\xi\in\mathfrak{so}(3)$, $X\in \R^{4n}$, is a Lie algebra isomorphism.
\end{proof}

This proposition says that $\g_{\rho}$ is independent of the hypercomplex structure we begin with, so that we will fix one and will always work with it; the corresponding Lie algebra will be denoted simply by $\g$. Let us identify $\R^{4n}$ with $\H^n$ via
\[ (x_1,y_1,z_1,w_1,\ldots,x_n,y_n,z_n,w_n)\mapsto (x_1+y_1\textbf{i}+z_1\textbf{j}+w_1\textbf{k},\ldots,x_n+y_n\textbf{i}+z_n\textbf{j}+w_n\textbf{k}).   \]
Then we define $J_1=L_{\textbf i},\, J_2=L_{\textbf j},\, J_3=L_{\textbf k}$, where $L_{\textbf i}$ denotes left-multiplication by ${\textbf i}$ on each factor $\H$; analogously for $L_{\textbf j}$, $L_{\textbf k}$. Note that for $n=1$ the hypercomplex structure $\{J_1,J_2,J_3\}$ is given by \eqref{hypercomplex} in the canonical basis of $\R^4$.
		
\medskip

The simply connected Lie group with Lie algebra $\mathfrak{so}(3)$ is $\operatorname{SU}(2)$. We will identify $\operatorname{SU}(2)$ with the Lie group of unit quaternions. Indeed, we can identify $\H$ with the following subspace of complex matrices
\[  \H\equiv \left\{ \begin{pmatrix}
\alpha & \beta \\ -\overline{\beta} & \overline{\alpha}
\end{pmatrix}\mid \alpha,\beta \in\C \right\}, \quad  a+b\textbf{i}+c\textbf{j}+d\textbf{k}\equiv \begin{pmatrix}
a+bi & c+di \\ -c+di & a-bi
\end{pmatrix},   \]
and any unit quaternion gets identified with such a matrix with $|\alpha|^2+|\beta|^2=1$, that is, a matrix in $\operatorname{SU}(2)$. In particular, $\operatorname{SU}(2)$ is diffeomorphic to the $3$-sphere $S^3$.

The representation $\rho:\mathfrak{so}(3)\to \mathfrak{gl}(4n,\R)$ corresponding to the hypercomplex structure defined above can be integrated to a representation $\tilde{\rho}:\operatorname{SU}(2)\to \operatorname{GL}(4n,\R)$, given by
\[  \tilde{\rho}(g)(q_1,\ldots,q_n)= (\delta gq_1,\ldots,\delta gq_n), \quad g\in \operatorname{SU}(2),\, q_r\in\H.   \]
Hence the simply connected Lie group with Lie algebra $\g=\g_{\tilde\rho}$ is $G=\operatorname{SU}(2)\ltimes_{\tilde{\rho}} \H^n$. The product in $G$ is given by
\[ (g,(q_1,\ldots,q_n))(g',(q_1',\ldots,q_n'))=(gg',(q_1+\delta gq_1',\ldots,q_n+\delta gq_n')).   \]

According to Proposition \ref{semidirect} the Lie group $G$ admits a left invariant abelian almost $3$-contact metric structure together with a non-parallel canonical connection.

\medskip

\begin{remark}
It is easy to verify that for $n=1$ the representation $\tilde{\rho}$ of $\operatorname{SU}(2)$ on $\R^4$ is irreducible, while for $n>1$ the representation $\tilde{\rho}$ is reducible, equal to the sum of $n$ copies of the $4$-dimensional one. It follows from \cite{IRS} that the representation $\tilde{\rho}:\operatorname{SU}(2)\to \operatorname{GL}(4,\R)$ is the only $4$-dimensional irreducible real representation of $\operatorname{SU}(2)$, up to equivalence.
\end{remark}

\medskip

\subsection{Compact quotients of $G=\operatorname{SU}(2)\ltimes_{\tilde{\rho}} \H^n$}
We show next that $G$  admits co-compact discrete subgroups. From now on, in this article a co-compact discrete subgroup of $G$ will be called a lattice. We may assume $\delta =1$.

\medskip

We begin by showing that when $n=1$ the group $G=\operatorname{SU}(2)\ltimes_{\tilde{\rho}} \H$ does admit lattices.

For each $m\in\N$, let $\theta_m:=\frac{2\pi}{m}$ and let $G_m$ denote the subgroup of $\operatorname{SU}(2)$ generated by $g_m=\begin{pmatrix} \operatorname{e}^{i\theta_m} & 0 \\ 0 & \operatorname{e}^{-i\theta_m} \end{pmatrix}$, therefore $G_m$ is isomorphic to the cyclic group of order $m$.
Consider now the action of $g_m$ on $\H\equiv \R^4$, given by a $4\times 4$ real matrix $A_m$, associated to $\tilde{\rho}(g_m)$. If $A_m$ is conjugate to an invertible integer matrix $E_m\in\operatorname{GL}(4,\Z)$, i.e. there exists $P_m\in\operatorname{GL}(4,\R)$ such that $P_m^{-1}A_mP_m=E_m$, then $A_m$ preserves the subgroup $P_m\Z^4\subset\R^4$ and hence we can form the semidirect product $\Gamma_m:=G_m\ltimes_{\tilde{\rho}} P_m\Z^4$, which will be a discrete subgroup of $G$. More precisely, we have that \footnote{For two $2\times 2$ matrices $A$ and $B$, we denote $A\oplus B= \begin{pmatrix}
	A & 0 \\ 0 & B 	\end{pmatrix}$.}
\begin{equation}\label{A_m}
 A_m=\begin{pmatrix} \cos \theta_m & -\sin \theta_m  \\ \sin \theta_m & \cos \theta_m \end{pmatrix} \oplus \begin{pmatrix} \cos \theta_m & -\sin \theta_m  \\ \sin \theta_m & \cos \theta_m \end{pmatrix}.
\end{equation}

Analyzing the characteristic polynomial of $A_m$ it is easy to show that $A_m$ is conjugate to an integer matrix if and only if $m\in\{1,2,3,4,6 \}$. For $m=1,2,4$, the matrix $A_m$ is itself integer (therefore we can take $P_m=I_4$), while:
\begin{itemize}
	\item for $m=3$, we can choose $P_3= \begin{pmatrix} 	0 & -\sqrt{3} \\ 2 & -1 	\end{pmatrix} \oplus \begin{pmatrix} 0 & -\sqrt{3} \\ 2 & -1 \end{pmatrix}$;
	\item for $m=6$, we can choose $P_6= \begin{pmatrix} 	2 & 1 \\ 0 & \sqrt{3} 	\end{pmatrix} \oplus \begin{pmatrix} 2 & 1 \\ 0 & \sqrt{3} \end{pmatrix}$.
\end{itemize}
Finally, it is easy to show that for each $m=1,2,3,4,6$, the restriction of the projection $\pi: G\to \Gamma_m\backslash G$ to $\operatorname{SU}(2)\times [0,2]^4$ is surjective. Therefore the quotient manifold $\Gamma_m\backslash G$ is compact, for $m=1,2,3,4,6$.

\medskip

For $n>1$, we can repeat repeat this process in each irreducible subrepresentation $\H$ of $\H^n$, obtaining lattices in $G$ given by $\Gamma_m=G_m\ltimes_{\tilde{\rho}} P_m\Z^{4n}$, for $m=1,2,3,4,6$.

\begin{remark}
	When $m=1$, we have that $G_1=\{1\}$ and $\Gamma_1=\{1\}\times \Z^{4n}$. Consequently, $\Gamma_1\backslash G$ is diffeomorphic to $S^3\times T^{4n}$.
\end{remark}
	
\begin{remark}
	For $n>1$, there are lattices in $G$ of the form $G_m\ltimes_{\tilde{\rho}} P_m\Z^{4n}$ for values of $m$ other than $1,2,3,4,6$. However, for the sake of simplicity, we will keep working with the ones we found previously (i.e., $m=1,2,3,4,6$).
\end{remark}

 \

 Next, we will determine the first homology group (with integer coefficients) of the compact manifolds $M_m:=\Gamma_m\backslash G$, where $\Gamma_m$, $m=1,2,3,4,6$, are the lattices in $G$ constructed above. We obtain as a consequence the first Betti number $b_1(M_m)$ of these manifolds. Since $M_1\cong S^3\times T^{4n}$, whose homology is well known, we will focus on $m=2,3,4,6$.

 Recall that $\Gamma_m=G_m\ltimes_{\tilde{\rho}} P_m\Z^{4n}$, where $G_m$ is a cyclic subgroup of order $m$ and $P_m\in \operatorname{GL}(4n,\R)$ is a matrix such that $E_m:=P_m^{-1}A_mP_m\in \operatorname{GL}(4n,\Z)$, with $A_m$ as in \eqref{A_m}. Hence
 \begin{equation}\label{gamma}
 \Gamma_m\cong \Z_m\ltimes_{E_m} \Z^{4n},
 \end{equation}
 where the action of $1\in \Z_m$ on $\Z^{4n}$ is given by the matrix $E_m$. Note that $E_m$ has order $m$.

 Since $G$ is simply connected, it follows that  $\pi_1(M_m)\cong \Gamma_m$. Thanks to  Hurewicz theorem, we have then that $H_1(M_m,\Z)\cong \Gamma_m/[\Gamma_m,\Gamma_m]$. For the next computations, we will consider $\Gamma_m$ given as in \eqref{gamma}.

 For $(k,u),(l,v)\in \Gamma_m$, we have that $(k,u)(l,v)=(k+l,u+E_m^kv)$ and $(k,u)^{-1}=(-k,-E_m^{-k}u)$, whence
 \begin{align}\label{commutator}
 [(k,u),(l,v)] & = (k,u)(l,v)(k,u)^{-1}(l,v)^{-1} \\
 & = (0,(u-E_m^lu)-(v-E_m^kv)). \nonumber
 \end{align}

 \medskip

 $\bullet$ For $m=2$, we have that $E_2=-I$ and using \eqref{commutator} it is easy to show that
 \[  [\Gamma_2,\Gamma_2]= \{ (0,(u_1,\ldots,u_{4n}))\mid u_i\in 2\Z \text{ for all } i    \}.   \]
 The map $f:\Gamma_2\to \Z_2\oplus(\Z_2)^{4n}$ given by $f(k,(u_1,\ldots,u_{4n}))=(k,[u_1]_2,\ldots,[u_{4n}]_2)$ is a surjective homomorphism with $\operatorname{Ker} f=[\Gamma_2,\Gamma_2]$, thus
 \[  H_1(M_2,\Z)\cong \Gamma_2/[\Gamma_2,\Gamma_2]\cong (\Z_2)^{4n+1}.  \]
 It follows that $b_1(M_2)=0$.

 \medskip

 $\bullet$ For $m=4$, we have that $E_4=\begin{pmatrix} 0 & -1 \\ 1 & 0 \end{pmatrix}\oplus \cdots \oplus \begin{pmatrix} 0 & -1 \\ 1 & 0 \end{pmatrix}$ ($2n$ times), and using \eqref{commutator} it is easy to show that
 \[  [\Gamma_4,\Gamma_4]= \{ (0,(u_1,\ldots,u_{4n}))\mid u_{2i-1}+u_{2i}\in 2\Z \text{ for all } i    \}.   \]
 The map $f:\Gamma_4\to \Z_4\oplus(\Z_2)^{2n}$ given by $f(k,(u_1,\ldots,u_{4n}))=(k,[u_1+u_2]_2,\ldots,[u_{4n-1}+u_{4n}]_2)$ is a surjective homomorphism with $\operatorname{Ker} f=[\Gamma_4,\Gamma_4]$, thus
 \[  H_1(M_4,\Z)\cong \Gamma_4/[\Gamma_4,\Gamma_4]\cong \Z_4\oplus (\Z_2)^{2n}.  \]
 It follows that $b_1(M_4)=0$.

 \medskip

 $\bullet$ For $m=3$, we have that $E_3=\begin{pmatrix} 0 & -1 \\ 1 & -1 \end{pmatrix}\oplus \cdots \oplus \begin{pmatrix} 0 & -1 \\ 1 & -1 \end{pmatrix}$ ($2n$ times), and using \eqref{commutator} it is easy to show that
 \[  [\Gamma_3,\Gamma_3]= \{ (0,(u_1,\ldots,u_{4n}))\mid u_{2i-1}+u_{2i}\in 3\Z \text{ for all } i    \}.   \]
 The map $f:\Gamma_3\to \Z_3\oplus(\Z_3)^{2n}$ given by $f(k,(u_1,\ldots,u_{4n}))=(k,[u_1+u_2]_3,\ldots,[u_{4n-1}+u_{4n}]_3)$ is a surjective homomorphism with $\operatorname{Ker} f=[\Gamma_3,\Gamma_3]$, thus
 \[  H_1(M_3,\Z)\cong \Gamma_3/[\Gamma_3,\Gamma_3]\cong (\Z_3)^{2n+1}.  \]
 It follows that $b_1(M_4)=0$.

 \medskip

 $\bullet$ For $m=6$, we have that $E_6=\begin{pmatrix} 0 & -1 \\ 1 & 1 \end{pmatrix}\oplus \cdots \oplus \begin{pmatrix} 0 & -1 \\ 1 & 1 \end{pmatrix}$ ($2n$ times), and using \eqref{commutator} it is easy to show that
 \[  [\Gamma_6,\Gamma_6]= \{ 0\}\oplus \Z^{4n}.  \]
 In this case it is clear that
 \[  H_1(M_6,\Z)\cong \Gamma_6/[\Gamma_6,\Gamma_6]\cong \Z_6.  \]
 It follows that $b_1(M_6)=0$.

 \

 \begin{example}
 	We provide next an example of a lattice in $G$ which arises from a non-abelian subgroup of $\operatorname{SU}(2)$. Namely, we will consider the quaternionic group $Q_8\subset \operatorname{SU}(2)$, where we identify as before $\operatorname{SU}(2)$ with the unit quaternions. Since $\tilde{\rho}(g)\in \operatorname{GL}(4n,\Z)$ for any $g\in Q_8$, the semidirect product $\Lambda:=Q_8\ltimes_{\tilde{\rho}} \Z^{4n}$ is well defined and it is a discrete subgroup of $G$. One can check that the restriction of the projection $\pi: G\to \Lambda\backslash G$ to $\operatorname{SU}(2)\times [0,1]^{4n}$ is surjective, therefore the quotient manifold $\Lambda\backslash G$ is compact.
 	
 	Concerning the abelianization of $\Lambda$, it is easy to verify that
 	\[ [\Lambda,\Lambda]=\left \{\left(\pm 1,(u_1,\ldots,u_{4n})\right) \mid \sum_{i=1}^{4} u_{4k+i}\in 2\Z, \text{ for } k=0,\ldots,n-1   \right \}.    \]
 	Let us consider the surjective homomorphism $\gamma:Q_8\to \Z_2\oplus \Z_2$ defined by:
 	\[ \gamma(\pm 1)=(0,0),\quad \gamma(\pm \textbf{i})=(1,0),\quad \gamma(\pm \textbf{j})=(0,1),\quad \gamma(\pm \textbf{k})=(1,1). \]
 	Then the map $f:\Lambda\to (\Z_2\oplus \Z_2)\oplus(\Z_2)^{n}$ given by
 	\[ f(g,(u_1,\ldots,u_{4n}))= (\gamma(g),[u_1+u_2+u_3+u_4]_2,\ldots,[u_{4n-3}+u_{4n-2}+u_{4n-1}+u_{4n}]_2)\]
 	is a surjective homomorphism with $\operatorname{Ker} f=[\Lambda,\Lambda]$, thus
 	\[  H_1(\Lambda\backslash G,\Z)\cong \Lambda/[\Lambda,\Lambda]\cong  (\Z_2)^{n+2}.  \]
 	It follows that $b_1(\Lambda\backslash G)=0$.
 \end{example}
	
\

\end{document}